\def\Hom{\mathrm{Hom}}
\def\RHom{\mathrm{{\bf R}Hom}}
\def\Ext{\mathrm{Ext}}
\def\Coker{\mathrm{Coker}}
\def\Ker{\mathrm{Ker}}
\def\Im{\mathrm{Im}}
\def\tr{\mathrm{tr}}
\def\Spec{\mathrm{Spec}}
\def\Supp{\mathrm{Supp}}
\def\Ann{\mathrm{Ann}}
\def\Ass{\mathrm{Ass}}
\def\Assh{\mathrm{Assh}}
\def\Min{\mathrm{Min}}
\def\sup{\mathrm{sup}}
\def\inf{\mathrm{inf}}
\def\ara{\mathrm{ara}}
\def\depth{\mathrm{depth}}
\def\dim{\mathrm{dim}}
\def\grade{\mathrm{grade}}
\def\height{\mathrm{ht}}
\def\pd{\mathrm{pd}}
\def\a{\mathfrak a}
\def\b{\mathfrak b}
\def\m{\mathfrak m}
\def\n{\mathfrak n}
\def\p{\mathfrak p}
\def\q{\mathfrak q}
\def\N{\mathbb N}
\def\P{\mathfrak P}
\def\Q{\mathfrak Q}
\def\L{\bf L}
\def\aa{\text{\boldmath $a$}}
\def\yy{\text{\boldmath $y$}}
\def\G{\varGamma }
\def\RG{\mathrm{{\bf R}{\Gamma}}}
\def\GI{\varGamma _{I}}
\def\GA{\varGamma _{\a}}
\def\GB{\varGamma _{\b}}
\def\GIJ{\varGamma _{I, J}}
\def\GmJ{\varGamma _{\m, J}}
\def\HI{H_{I}}
\def\Hm{H_{\m}}
\def\HA{H_{\a}}
\def\HIJ{H_{I, J}}
\def\HmJ{H_{\m , J}} 
\def\WIJ{W(I, J)}
\def\WmJ{W(\m, J)}
\def\WTIJ{\tilde{W} (I, J)}
\def\WTmI{\tilde{W} (\m, I)}
\def\WTmJ{\tilde{W} (\m, J)}
\theoremstyle{plain} 
\newtheorem{thm}{\textbf Theorem}[section]
\newtheorem{lem}[thm]{\textbf Lemma}
\newtheorem{cor}[thm]{\textbf Corollary}
\newtheorem{prop}[thm]{\textbf Proposition}
\theoremstyle{definition}
\newtheorem{df}[thm]{\textbf Definition}
\newtheorem{rem}[thm]{\textbf Remark}
\theoremstyle{plain}
\begin{document}
\title[Local cohomology based on a nonclosed support]{Local cohomology based on a nonclosed support defined by a pair of ideals}
\author{Ryo Takahashi, Yuji Yoshino, Takeshi Yoshizawa}
\email{}
\thanks{}
\subjclass[2000]{13D45}

%
%
\begin{abstract}
We introduce a generalization of the notion of local cohomology module, 
which we call a local cohomology module with respect to a pair of ideals $(I, J)$, and study its various properties. 
Some vanishing and nonvanishing theorems are given for this generalized version of local cohomology. 
We also discuss its connection with ordinary local cohomology.
\end{abstract}
\maketitle
\section*{Introduction}

Local cohomology theory has been an indispensable and significant tool in commutative algebra and algebraic geometry.
In this paper, we intoroduce a generalization of the notion of local
cohomology module, which we call a local cohomology module with respect to a pair of ideals $(I, J)$, and study its various properties. 

To be more precise, let  $R$  be a commutative noetherian ring and let  $I$  and $J$  be ideals of  $R$.
We are concerned with the subset
$$
\WIJ = \{\,\p \in \Spec(R)\mid I^n \subseteq \p + J\text{ for an integer }n \geq 1\,\}
$$
of $\Spec(R)$.
See Definition \ref{def-WIJ} and Corollary \ref{KTY1-2} $(\rm 1)$.
In general, $\WIJ$ is closed under specialization, but not necessarily a closed subset of $\Spec (R)$.
For an $R$-module $M$, we consider the {\it $(I, J)$-torsion submodule} $\GIJ (M)$ of $M$ which consists of all elements $x$ of $M$ with $\Supp (Rx) \subseteq \WIJ$.
Furthermore, for an integer $i$, we define the $i$-th {\it local cohomology functor} $\HIJ^{i}$ with respect to $(I, J)$ to be  the $i$-th right derived functor of $\GIJ$. 
 We call  $\HIJ^{i}(M)$ the  $i$-th {\it local cohomology module} of $M$ with respect to $(I, J)$.
See  Definitions \ref{def-TF} and \ref{def-LCF}.

Note that if $J=0$ then $\HIJ ^{i}$ coincides with the ordinary local cohomology functor $\HI ^{i}$ with the support in the closed subset  $V(I)$.
On the other hand, if  $J$  contains $I$ then it is easy to see that  $\GIJ$  is the identity functor and  $\HIJ ^i = 0$ for  $i >0$. 
Thus we may consider the local cohomology functor  $\HIJ ^i$  as a family of functors with parameter $J$, which connects the ordinary local cohomology functor  $H_I^i$  with the trivial one. 

Our main motivation for this generalization is the following. 
Let  $(R, \m)$  be a local ring and let  $I$  be an ideal of  $R$. 
We assume that  $R$  is a complete local ring for simplicity. 
For a finitely generated $R$-module  $M$  of dimension $r$, P. Schenzel \cite{Schenzel} introduces the notion of the canonical module  $K_M$, and he proves the existence of a monomorphism  $H^r _I (M) ^{\vee} \to K_{M}$  and determines the image of this mapping, where  ${}^{\vee}$ denotes the Matlis dual. 
By his result, we can see that the image is actually equal to  $\Gamma _{\m, I} (K_{M})$. 
From this observation one expects that there would be a duality between the ordinary cohomology functor  $H_I^{i}$  and  our cohomology functor $H_{\m, I}^{i}$. 
We shall show in Section 5 that there are canonical isomorphisms 
$$
H^r _I (M) ^{\vee} \simeq \Gamma _{\m, I}(K_{M}) \quad \text{and} \quad 
H^r _{\m, I} (M) ^{\vee} \simeq \Gamma _{I}(K_{M}). 
$$
See Theorem \ref{c2} and Corollary \ref{c2cor}.

We should note that our idea already appears in several articles, but in more general setting. 
In fact, if we denote by  $\WTIJ$  the set of ideals  $\a$  satisfying  $I^n \subseteq \a +J$ for an integer  $n$, then the set $F=\{ D(\a) \mid \a \in \WTIJ \}$ of open sets in $\Spec(R)$ forms a Zariski filter on $\Spec (R)$. 
See \cite[Definition 6.1.1]{Brenner}.
In this setting, H. Brenner \cite[Section 6.2]{Brenner} defines the functor $\G_F$  by 
$$
\G _{F}(M)=\{ x \in \G (\Spec(R), M) \mid \, x|_{V(\a)}=0 \text{ for some } D(\a) \in F \} 
= \varinjlim _{D(\a) \in F} \G_{V(\a)}(M), 
$$
for an $R$-module  $M$. 
This actually coincides with $\GIJ (M)$.

The aim of the present paper is to generalize a number of
statements about ordinary local cohomology to our generalized local cohomology  $H_{I, J}^{i}$. 
One of our main goals is to give criteria for the vanishing and nonvanishing of  $H_{I,J}^i(M)$.


\vspace{6pt}

The organization of this paper is as follows.

After discussing basic properties of the local cohomology functors $\HIJ^{i}$ and the subset $\WIJ$ of $\Spec(R)$ in Section $1$, 
we define a generalization of  \v{C}ech complexes in Section $2$. 
In fact, we show that the local cohomology modules with respect to  $(I, J)$  are obtained as cohomology modules of the generalized \v{C}ech complexes  (Theorem \ref{L=C}). 

In Section $3$, we show some relationship of our local cohomology functor with the ordinary local cohomology functor. 

Section $4$ is a core part of this paper, where we discuss the vanishing and nonvanishing of $\HIJ ^i$.
We are interested in generalizing Grothendieck's vanishing theorem and Lichtenbaum-Hartshorne theorem to our context. 
In fact, one of our main theorems says that the equality 
\[ \inf \{~ i \mid \HIJ ^i (M) \neq 0 ~ \} = \inf \{~ \depth ~ M_{\p} \mid \p \in \WIJ ~ \}  \]
holds for a finitely generated module $M$ (Theorem \ref{KTY1-6}).
A generalized version of Lichtenbaum-Hartshorne theorem will be given in Theorem \ref{GLHVT}.

In Section $5$, we shall show a generalized version of the usual local duality theorem for local cohomology modules with respect to $(I, J)$. 
Also, motivated by the work of Schenzel, we discuss some kind of duality between  $\HIJ ^i$  and ordinary local cohomology modules. 
See Theorem \ref{LD}.

In Section $6$, we study the right derived functor  $\RG  _{I, J}$  defined on the derived category $D^b(R)$, and prove several functorial identities involving  $\RG _{I,J}$. 
See Theorems \ref{p-38} and \ref{p-53}.

\vspace{6pt}

Throughout the paper, we freely use the conventions of the notation for 
commutative algebra from the books Bruns-Herzog \cite{BH} and Matsumura \cite{Matsumura}.  
And we use well-known theorems concerning ordinary local cohomology without citing any references, for which the reader should consult Brodmann-Sharp \cite{BS}, Foxby \cite{Foxby}, Grothendieck \cite{Grothendieck} and Hartshorne \cite{Hartshorne}.

\section{Definition and Basic Properties}

Throughout this paper, we assume that all rings are
commutative noetherian rings. Let $R$ be a ring, and $I,J$ ideals of
$R$.

\begin{df}\label{def-TF}
For an  $R$-module $M$,  we denote by $\GIJ (M)$ the set of elements $x$ of $M$ such that $I^n x\subseteq J x$ for some integer $n$. 
$$
\GIJ (M ) = \{ x \in M \ | \ I^n x\subseteq J x \quad \text{for } \ \ n \gg 1\}
$$
Note that an element $x$ of $M$ belongs to $\GIJ (M)$ if and only if 
$I^n \subseteq  \Ann (x)+J $ for $n \gg 1$. 
Using this, we easily see that $\GIJ(M)$ is an $R$-submodule of $M$. 

For a homomorphism $f:M\to N$ of $R$-modules, it is easy to see that the inclusion $f(\GIJ (M)) \subseteq \GIJ (N)$, and hence the mapping $\GIJ (f) : \GIJ (M) \to \GIJ (N)$ is defined so that it agrees with $f$ on $\GIJ (M)$. 

Thus  $\GIJ $  becomes an additive covariant functor from the category of all $R$-modules to itself. 
We call $\GIJ$ the {\it $(I, J)$-torsion functor}. 
\end{df}

It is obvious that if $J=0$, then the $(I, J)$-torsion functor $\GIJ$ coincides with $I$-torsion functor $\GI$.

\begin{lem}
The $(I, J)$-torsion functor $\GIJ$ is a left exact functor on the category of all $R$-modules.
\end{lem}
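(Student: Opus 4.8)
The plan is to verify that $\GIJ$ preserves kernels, equivalently that for any exact sequence $0 \to M' \xrightarrow{f} M \xrightarrow{g} M''$ of $R$-modules, the induced sequence $0 \to \GIJ(M') \to \GIJ(M) \to \GIJ(M'')$ is exact. First I would observe that $\GIJ(f)$ is injective: since $f$ is injective and $\GIJ(f)$ is by construction the restriction of $f$ to the submodule $\GIJ(M') \subseteq M'$, it is injective as well.

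Next I would check exactness at $\GIJ(M)$. Since $g \circ f = 0$, we also have $\GIJ(g) \circ \GIJ(f) = 0$, so $\Im \GIJ(f) \subseteq \Ker \GIJ(g)$. For the reverse inclusion, take $x \in \GIJ(M)$ with $\GIJ(g)(x) = g(x) = 0$. By exactness of the original sequence at $M$, there is a unique $x' \in M'$ with $f(x') = x$. It remains to show $x' \in \GIJ(M')$. Since $x \in \GIJ(M)$, there is an integer $n$ with $I^n x \subseteq Jx$; I would then use that $f$ is an $R$-module homomorphism, so $f(I^n x') = I^n f(x') = I^n x \subseteq Jx = Jf(x') = f(Jx')$, and injectivity of $f$ gives $I^n x' \subseteq Jx'$. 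Hence $x' \in \GIJ(M')$ and $\GIJ(f)(x') = x$, completing the argument.

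The only subtlety — and it is minor — is making sure the definition of $\GIJ$ via the containment $I^n x \subseteq Jx$ transports cleanly along $f$; this works precisely because $f$ commutes with multiplication by elements of $R$, so both $I^n x'$ and $Jx'$ map onto $I^n x$ and $Jx$ respectively, and injectivity lets us pull the containment back. There is no real obstacle here: the statement is essentially formal once one unwinds the definition, and the left exactness of $\GIJ$ follows. I would close by remarking that this is the expected analogue of the left exactness of the ordinary torsion functor $\GI$, recovered in the case $J = 0$.
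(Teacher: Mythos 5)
Your proof is correct and follows essentially the same route as the paper: reduce to exactness at $\GIJ(M)$, lift a kernel element $x$ along $f$, and transport the containment $I^n x \subseteq Jx$ back along the injective map $f$ (your "pull the containment back" is exactly the paper's element-wise argument that $ax = bx$ forces $(a-b)y = 0$ and hence $ay \in Jy$). No gaps; nothing further is needed.
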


\begin{proof}
Let $0 \to L \overset{f}{\to } M \overset{g}{\to } N \to 0$  be an exact sequence of $R$-modules. 
We must show that 
\[\minCDarrowwidth1.5pc \begin{CD}  0 @>>> \GIJ (L) @>\GIJ(f)>> \GIJ(M) @>\GIJ(g)>> \GIJ(N) \end{CD} \]
is exact. 
It is clear that $\GIJ(f)$ is a monomorphism and 
\[ \Im (\GIJ(f)) \subseteq \Ker (\GIJ(g)).\]  
To prove the converse inclusion, let $x \in \Ker (\GIJ (g))$. 
Since $x \in \GIJ (M)$, there exists an integer $n\geq 0$ such that $I^{n} x \subseteq Jx$. 
There is an element $y \in L$ with $f(y)=x$, since $g(x)=0$. 
We have to show that $y \in \GIJ(L)$. 
For each $a \in I^{n}$, we have $f(ay)=af(y)=ax \in I^{n}x \subseteq Jx$, 
and hence there is an element  $b\in J$ with  $ax=bx$. 
Thus the equality $f((a-b)y)=af(y)-bf(y)=ax-bx=0$ holds, and consequently 
$(a-b)y=0$ because $f$ is a monomorphism. 
Therefore $ay\in Jy$, and thus  $I^{n}y \subseteq Jy$. 
It follows that $y\in \GIJ(L)$.
\end{proof}

\begin{df}\label{def-LCF}
For an integer $i$, the $i$-th right derived functor of $\GIJ$ is denoted by $\HIJ ^{i}$  and will be referred  to as the {\it $i$-th local cohomology functor with respect to $(I, J)$}. 

For an $R$-module $M$, we shall refer to $\HIJ ^{i} (M)$ as the {\it $i$-th local cohomology module} of $M$ {\it with respect to $(I, J)$}, 
and to $\GIJ (M)$ as the {\it $(I, J)$-torsion part} of $M$. 

We say that $M$ is {\it $(I, J)$-torsion} (respectively {\it $(I, J)$-torsion-free}) precisely  when $\GIJ (M)=M$ (respectively $\GIJ (M)=0$).
\end{df}

It is easy to see that if  $J=0$, then  $\HIJ ^{i}$ coincides with the ordinary local cohomology functor $\HI ^{i}$.

We collect some basic properties of the $(I, J)$-torsion part and the local cohomology modules with respect to $(I, J)$.

\begin{prop}\label{replace IJ}
Let $I$, $I^{\prime}$, $J$, $J^{\prime}$ be ideals of $R$ and let $M$ be an $R$-module. 

\begin{itemize}
\item[{(\rm 1)}]\ $\G_{I, J}(\G_{I^{\prime}, J^{\prime}}(M))=  \G_{I^{\prime}, J^{\prime}}(\G_{I, J}(M))$. \vspace{4pt}

\item[{(\rm 2)}]\ If $I\subseteq I^{\prime}$, then $\GIJ(M) \supseteq  \G_{I^{\prime}, J}(M)$.\vspace{4pt}

\item[{(\rm 3)}]\ If $J\subseteq J^{\prime}$, then $\G_{I, J}(M) \subseteq  \G_{I, J^{\prime}}(M)$.\vspace{4pt}

\item[{(\rm 4)}]\ $\G _{I, J} (\G _{I^{\prime}, J} (M) )=\G _{I+I^{\prime}, J} (M)$.\vspace{4pt}

\item[{(\rm 5)}]\ $\G_{I, J}(\G_{I, J^{\prime}}(M))=\G_{I, JJ^{\prime}}(M)=\G_{I, J\cap J^{\prime}}(M)$.\\ 
In particular, $H_{I, JJ^{\prime}}^{i} (M)=H_{I, J\cap J^{\prime}}^{i}(M)$ for all integers $i$.\vspace{4pt}

\item[{(\rm 6)}]\ If $J^{\prime} \subseteq J$, then $H_{I+J^{\prime}, J}^{i}(M)=\HIJ^{i}(M)$ for all integers $i$. \\
In particular, $H_{I+J, J}^{i} (M)=\HIJ^{i}(M)$ for all integers $i$.\vspace{4pt}

\item[{(\rm 7)}]\ If $\sqrt{I}=\sqrt{I^{\prime}}$, then $\HIJ ^{i} (M)=H_{I^{\prime}, J}^{i} (M)$ for all integers $i$. \\
In particular,  $\HIJ ^{i} (M)=H_{\sqrt{I}, J}^{i} (M)$ for all integers $i$.\vspace{4pt}

\item[{(\rm 8)}]\ If $\sqrt{J}=\sqrt{J^{\prime}}$, then $\HIJ ^{i} (M)=H_{I, J^{\prime}}^{i} (M)$ for all integers $i$. \\
In particular,  $\HIJ ^{i} (M)=H_{I, \sqrt{J}}^{i} (M)$ for all integers $i$.\vspace{4pt}
\end{itemize}
\end{prop}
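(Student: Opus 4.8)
The plan is to work directly with the elementwise description of $\GIJ$ given in Definition \ref{def-TF}: an element $x \in M$ lies in $\GIJ(M)$ precisely when $I^n \subseteq \Ann(x) + J$ for $n \gg 1$. Most of the eight assertions reduce to purely ideal-theoretic manipulations with this containment, applied to a fixed annihilator ideal $\a = \Ann(x)$. For (2) and (3), if $I \subseteq I'$ then $I^n \subseteq (I')^n \subseteq \a + J$ shows $\G_{I',J}(M) \subseteq \GIJ(M)$; and if $J \subseteq J'$ then $\a + J \subseteq \a + J'$ gives $\GIJ(M) \subseteq \G_{I,J'}(M)$ directly. For (1), I would observe that $\G_{I',J'}(\GIJ(M))$ is, elementwise, the set of $x$ with $I^n \subseteq \Ann(x)+J$ and $(I')^m \subseteq \Ann(x)+J'$ simultaneously (the annihilator of $x$ in the submodule $\GIJ(M)$ is the same $\Ann(x)$), and this condition is symmetric in the two pairs, whence the equality; the same bookkeeping handles (4) via $I^n + (I')^n \subseteq \a + J$ iff $(I+I')^{2n} \subseteq \a + J$ (using $\sqrt{I+I'} = \sqrt{I^n + (I')^n}$-type estimates), and (6) via $\sqrt{I + J'} = \sqrt{I+J'}$ combined with $J' \subseteq J$ so that $\a + J \supseteq (I+J')^?$ iff $\a + J \supseteq I^?$.

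For (5), the first equality $\G_{I,J}(\G_{I,J'}(M)) = \G_{I,JJ'}(M)$ comes from: $x$ lies in the left side iff $I^n \subseteq \a + J$ and $I^m \subseteq \a + J'$ for large $n,m$, which forces $I^{n+m} \subseteq (\a+J)(\a+J') \subseteq \a + JJ'$, and conversely $JJ' \subseteq J$ and $JJ' \subseteq J'$ give the reverse containment immediately; the equality $\G_{I,JJ'}(M) = \G_{I,J\cap J'}(M)$ then follows because $JJ' \subseteq J\cap J' \subseteq J$ so both sides sit between $\G_{I,JJ'}$ and $\G_{I,J}$ — more directly, $\sqrt{JJ'} = \sqrt{J \cap J'}$, which reduces (5)'s second equality to (8). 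The "in particular" statements about $H^i_{I,\bullet}$ in (5) and (6) follow formally: once the torsion functors agree, their right derived functors agree, so I would just remark that an equality of functors $\G_{I,J} = \G_{I,J'}$ passes to all $H^i$.

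For (7) and (8) — the radical invariance — the key point is $\sqrt{I} = \sqrt{I'}$ implies $\G_{I,J} = \G_{I',J}$ as functors: for any $x$, $I^n \subseteq \a + J$ for large $n$ iff $\sqrt{I} \subseteq \sqrt{\a+J}$ iff $\sqrt{I'} \subseteq \sqrt{\a+J}$ iff $(I')^m \subseteq \a + J$ for large $m$; similarly for (8) using $\a + J$ versus $\a + \sqrt{J}$, since $\sqrt{\a + J} = \sqrt{\a + \sqrt{J}}$. An equality of the left-exact functors then yields equality of their right derived functors $H^i$. I do not anticipate a genuine obstacle here; the only mild subtlety is being careful in (1) that passing to the submodule $\GIJ(M)$ does not change the annihilator of an element, and in (4)/(6) getting the radical estimates for sums of ideals stated cleanly — I would phrase everything in terms of radicals from the outset (using (7) and (8) as already-available reductions) to keep the ideal arithmetic to a minimum.
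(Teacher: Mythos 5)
Your proposal is correct and takes essentially the same route as the paper: the paper's proof just notes that all items follow easily from the definitions and verifies only (4) by the very computation you describe ($I^m x\subseteq Jx$ and $I'^n x\subseteq Jx$ give $(I+I')^{m+n}x\subseteq Jx$, and conversely), which is exactly your elementwise bookkeeping with $I^n\subseteq \Ann(x)+J$. The only blemishes are cosmetic --- the tautological radical identity quoted for (6) and the vague ``sits between'' remark in (5) --- and in both places you immediately supply the correct argument ($J'\subseteq J$ gives $(I+J')^n\subseteq \Ann(x)+J$ iff $I^n\subseteq \Ann(x)+J$, and $\sqrt{JJ'}=\sqrt{J\cap J'}$ together with (8)).
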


\begin{proof}
All these statements follow easily from the definitions. 
As an illustration we just will prove statement $(\rm 4)$.

Let $x \in \G _{I, J} (\G _{I^{\prime}, J} (M) )$. 
Then there exists integers $m, n \geq 0$ such that $I^{m}x \subseteq Jx$ and $I^{\prime n}x \subseteq Jx$ hold. 
Thus we have $(I+I^{\prime})^{m+n} x \subseteq I^{m}x+I^{\prime n}x \subseteq Jx$, and hence $x\in \G_{I+I^{\prime}, J}(M)$. 
To prove the converse inclusion, let $x \in \G _{I+I^{\prime}, J} (M)$. 
Then there exists an integer $n\geq 0$ such that $(I+I^{\prime})^{n}x \subseteq Jx$. 
Thus $I^{n}x, I^{\prime n} x \subseteq (I+I^{\prime})^{n}x \subseteq Jx$. 
Hence $x\in \G _{I, J} (\G _{I^{\prime}, J} (M) )$. 
\end{proof}

\begin{df}\label{def-WIJ}
Let $\WIJ $ denote the set of prime ideals $\p $ of $R$ such that $I^n \subseteq J+\p $ for some integer $n$.
$$
\WIJ = \{ \p \in \Spec (R) \ | \ I^n \subseteq J+\p \quad \text{for} \ \ n \gg 1\}
$$ 
\end{df}

It is easy to see that if $J=0$, then  $\WIJ $ coincides with the Zariski closed set $V(I)$ consisting of all prime ideals containing $I$.
Note that $\WIJ $ is stable under specialization, 
but in general, it is not a closed subset of  $\Spec (R)$.

We exhibit  some of the properties of $\WIJ$ below.

\begin{prop}
Let $I$, $I^{\prime}$, $J$, $J^{\prime}$ be ideals of $R$. 
\begin{itemize}

\item[{(\rm 1)}]\ If $I\subseteq I^{\prime}$, then $\WIJ \supseteq W(I^{\prime}, J)$. \vspace{4pt}

\item[{(\rm 2)}]\ If $J\subseteq J^{\prime}$, then $\WIJ \subseteq W(I, J^{\prime})$. \vspace{4pt}

\item[{(\rm 3)}]\ $W(I+I^{\prime}, J) = \WIJ \cap W(I^{\prime}, J)$. \vspace{4pt}

\item[{(\rm 4)}]\ $W(I, J J^{\prime})=W(I, J \cap J^{\prime})=W(I, J)\cap W(I, J^{\prime})$. \vspace{4pt}

\item[{(\rm 5)}]\ $\WIJ=W(\sqrt{I}, J)=W(I, \sqrt{J})$. \vspace{4pt}

\item[{(\rm 6)}]\ Let  $R$  be a local ring with maximal ideal  $\m$. 
If $I$ is not an $\m$-primary ideal, then the following equality holds. 
\[ W(\m, I)=(\bigcap_{I \subsetneq J} \WmJ )\cap \{ \p \mid \p \text{ is prime ideal such that } \p \not \subseteq I\}.\] 

\item[{(\rm 7)}]\ $V(I) = \bigcap_{J} \WIJ = \bigcap _{J\in D(I)} \WIJ$, 
where $D(I)$ is the complement of $V(I)$ in $\Spec (R)$. 
\end{itemize}
\end{prop}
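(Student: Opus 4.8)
The plan is to prove each of the seven statements by unwinding the definition $\WIJ = \{\p \in \Spec(R) \mid I^n \subseteq J + \p \text{ for some }n\}$ and doing elementary ideal-theoretic manipulations, exactly parallel to the proof given for Proposition \ref{replace IJ}. In fact, parts (1)--(5) are the ``prime-ideal shadows'' of parts (2), (3), (4), (5), (7)/(8) of Proposition \ref{replace IJ}, and can be deduced either directly or by testing membership of a prime $\p$.

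First I would dispatch (1)--(5). For (1): if $I \subseteq I'$ and $I'^n \subseteq J + \p$, then $I^n \subseteq I'^n \subseteq J + \p$, so $W(I',J) \subseteq \WIJ$. For (2): if $J \subseteq J'$ and $I^n \subseteq J + \p \subseteq J' + \p$, we are done. For (3): the inclusion $\subseteq$ follows from (1) twice; conversely, if $I^m \subseteq J+\p$ and $I'^n \subseteq J+\p$, then $(I+I')^{m+n} \subseteq I^m + I'^n \subseteq J + \p$ since $J + \p$ is an ideal, so $\p \in W(I+I',J)$. For (4): $W(I,JJ') = W(I,J\cap J')$ because $\sqrt{JJ'} = \sqrt{J \cap J'}$ (this reduces (4) to (5)); and $\p \in W(I,J) \cap W(I,J')$ means $I^m \subseteq J+\p$ and $I^n \subseteq J'+\p$, so $I^{m+n} \subseteq (J+\p)(J'+\p) \subseteq JJ' + \p$, giving $\p \in W(I,JJ')$, while the reverse containment follows from (2). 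For (5): $W(\sqrt I, J) \subseteq \WIJ$ is a case of (1); conversely if $I^n \subseteq J+\p$ then $(\sqrt I)^{nk} \subseteq I^n \subseteq J+\p$ for suitable $k$ (using that $\sqrt I$ is finitely generated), so $\WIJ \subseteq W(\sqrt I, J)$; the equality $\WIJ = W(I, \sqrt J)$ follows since $I^n \subseteq J + \p$ iff $I^{n} \subseteq \sqrt J + \p$, using that $\p$ is prime so $\sqrt J + \p \subseteq \sqrt{J+\p}$ and a power of any element of $\sqrt{J+\p}$ lies in $J+\p$, combined with finite generation of $I$ to clear denominators of exponents.

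For (7): ``$\supseteq$'' over all $J$ is clear since each $\WIJ \supseteq V(I)$ by (2) (take $J = 0$, noting $W(I,0) = V(I)$); and $\bigcap_J \WIJ \subseteq \bigcap_{J \in D(I)} \WIJ$ is trivial since $D(I) \subseteq \{$all $J\}$ reading $J$ as an element; to finish I would show $\bigcap_{J \in D(I)} \WIJ \subseteq V(I)$: if $\p \notin V(I)$, pick $a \in I \setminus \p$, set $J = (a)$, which lies in $D(I)$, and observe that $I^n \subseteq (a) + \p$ would force $I^n \subseteq \p$ modulo... actually more carefully, I would argue that if $\p \in \WIJ$ for $J = Ra$ then $I^n \subseteq Ra + \p$; localizing at $\p$ and reducing mod $\p R_\p$ we may assume $R$ is a field's situation is too coarse, so instead I would note $I^n + \p = Ra + \p$ is impossible to control directly — the cleaner route is: choose $J$ to be a prime $\q \in \Min(I)$ — but that need not lie in $D(I)$. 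So the correct choice is $J = \p$ itself when... no, we need $J \in D(I)$, i.e.\ $I \not\subseteq J$. The honest fix: for $\p \notin V(I)$, the set of $J \in D(I)$ includes $J$ with $\sqrt{J+\p} \not\supseteq I$; indeed take any $J$ with $\p \subseteq J$, $J \in D(I)$ (e.g.\ $J = \p$ if $I \not\subseteq \p$, which holds), then $\WIJ$ condition is $I^n \subseteq \p$, contradiction. So $\p \notin \WIJ$ for this $J$, proving $\p \notin \bigcap_{J \in D(I)}\WIJ$.

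The real obstacle is part (6), the local statement. Here I would argue as follows. Write $S = \{\p \in \Spec(R) \mid \p \not\subseteq I\}$. For ``$\subseteq$'': if $\p \in W(\m, I)$, then $\m^n \subseteq I + \p$, so $I + \p$ is $\m$-primary; since $I$ is not $\m$-primary, $\p \not\subseteq I$, giving $\p \in S$; and for any $J \supsetneq I$ we have $I + \p \subseteq J + \p$, so $\m^n \subseteq J + \p$, i.e.\ $\p \in \WmJ$, giving $\p \in \bigcap_{I \subsetneq J}\WmJ$. The hard direction is ``$\supseteq$'': suppose $\p \not\subseteq I$ and $\p \in \WmJ$ for every $J$ strictly containing $I$; we must produce $n$ with $\m^n \subseteq I + \p$. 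I expect the key idea is to apply the hypothesis to $J = I + \p$ — we need $I \subsetneq I + \p$, which holds exactly because $\p \not\subseteq I$ — and then $\p \in W(\m, I+\p)$ gives precisely $\m^n \subseteq (I+\p) + \p = I + \p$, which is the conclusion. So the whole subtlety of (6) is recognizing that the single test ideal $J = I + \p$ is legitimate (i.e.\ lies in the range $I \subsetneq J$) precisely under the hypothesis $\p \not\subseteq I$, and that it yields the answer directly; the role of ``$I$ not $\m$-primary'' is only to make the two descriptions of the complement match up in the ``$\subseteq$'' direction. I would double-check the edge cases where $\p = \m$ or where $\bigcap_{I \subsetneq J}$ ranges over an empty or trivial family, but I do not anticipate genuine difficulty there.
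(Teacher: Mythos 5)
Your proposal is correct and follows essentially the same route as the paper: the key steps coincide exactly — for (4) the squeeze $W(I,JJ')\subseteq W(I,J\cap J')\subseteq W(I,J)\cap W(I,J')$ together with $I^{m+n}\subseteq(\p+J)(\p+J')\subseteq\p+JJ'$, for (6) testing the single ideal $J=I+\p$ (legitimate precisely because $\p\not\subseteq I$), and for (7), after your visible false starts, the choice $J=\p$ with $\p\notin W(I,\p)$, which is exactly the paper's argument. Your write-ups of the parts the paper leaves to the reader ((1)--(3), (5)) are also correct.
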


\begin{proof}
$(\rm 1)$ to $(\rm 5)$:  The proofs are easy.
We will only prove the statement $(\rm 4)$ and leave the proofs of the remaining statements to the reader. 

Since $J J^{\prime} \subseteq J \cap J^{\prime} \subseteq J, \ J^{\prime}$, 
it holds that $W(I, J J^{\prime}) \subseteq  W(I, J \cap J^{\prime}) \subseteq W(I, J)\cap W(I, J^{\prime})$.
Let $\p \in W(I, J)\cap W(I, J^{\prime})$. 
Then there exists integers $m$, $n\geq 0$ such that $I^{m}\subseteq \p +J$, $I^{n}\subseteq \p +J^{\prime}$. 
Thus $I^{m+n} \subseteq (\p+J)(\p+J^{\prime})\subseteq \p+J J^{\prime}$. 
Hence we have $\p \in W(I, J J^{\prime})$. 

\noindent
$(\rm 6)$:  
Let $\p \in W(\m, I)$, then $I+\p$ is $\m$-primary. 
If $I\subsetneq J$, then $J+\p$ is $\m$-primary  as well, hence $\p \in \WmJ$. 
Since $I$ is not $\m$-primary, we have $\p \not \subseteq I$. 

To prove the converse, let  $\p \in \bigcap_{I \subsetneq J} \WmJ$ with $\p \not \subseteq I$. 
Setting $J = I+\p \supsetneq I$, we must have $\p \in W(\m, I+\p)$. 
Thus $I+\p$ is an $\m$-primary ideal. 
Therefore it follows  $\p \in W(\m, I)$.

\noindent
$(\rm 7)$: 
It is trivial that $V(I) \subseteq \bigcap_{J} \WIJ \subseteq \bigcap _{J\in D(I)} \WIJ$. 
Suppose that $\p \not \in V(I)$. Then we have $\p \in D(I)$ and $\p \not \in W(I, \p)$. 
Thus $\p \not \in \bigcap _{J\in D(I)} \WIJ$. 
\end{proof}

\begin{prop}\label{KTY1-1}
For an $R$-module $M$, the following are equivalent.
\begin{enumerate}
  \item[{(\rm 1)}]\ $M$ is $(I, J)$-torsion $R$-module. \vspace{4pt}
  \item[{(\rm 2)}]\ $\Min (M) \subseteq \WIJ $. \vspace{4pt}
  \item[{(\rm 3)}]\ $\Ass (M) \subseteq \WIJ $. \vspace{4pt}
  \item[{(\rm 4)}]\ $\Supp (M) \subseteq \WIJ $. \vspace{4pt}
\end{enumerate}
\end{prop}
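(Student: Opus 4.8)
The plan is to prove the chain of implications $(\mathrm{4}) \Rightarrow (\mathrm{3}) \Rightarrow (\mathrm{2}) \Rightarrow (\mathrm{1}) \Rightarrow (\mathrm{4})$, since the implications $(\mathrm{4}) \Rightarrow (\mathrm{3}) \Rightarrow (\mathrm{2})$ are immediate from the inclusions $\Min(M) \subseteq \Ass(M) \subseteq \Supp(M)$ together with the fact that $\WIJ$ is stable under specialization (noted after Definition \ref{def-WIJ}), which upgrades containment of $\Min(M)$ to containment of $\Supp(M)$ once we know $(\mathrm{2}) \Rightarrow (\mathrm{4})$ will be obtained along the way — so in practice I would prove $(\mathrm{4}) \Rightarrow (\mathrm{3})$, $(\mathrm{3}) \Rightarrow (\mathrm{2})$, $(\mathrm{2}) \Rightarrow (\mathrm{1})$, and $(\mathrm{1}) \Rightarrow (\mathrm{4})$, closing the loop.

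For $(\mathrm{1}) \Rightarrow (\mathrm{4})$: assume $\GIJ(M) = M$ and let $\p \in \Supp(M)$; I must show $\p \in \WIJ$, i.e. $I^n \subseteq J + \p$ for $n \gg 1$. Since $\p \in \Supp(M)$, there is $x \in M$ with $\p \supseteq \Ann(x)$ (take $x$ whose annihilator is contained in $\p$, e.g. from a nonzero element of $M_\p$). Because $x \in M = \GIJ(M)$, the remark in Definition \ref{def-TF} gives $I^n \subseteq \Ann(x) + J \subseteq \p + J$ for $n \gg 1$, hence $\p \in \WIJ$.

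For $(\mathrm{2}) \Rightarrow (\mathrm{1})$, which I expect to be the main obstacle: assume $\Min(M) \subseteq \WIJ$ and take an arbitrary $x \in M$; I want $x \in \GIJ(M)$, equivalently $I^n \subseteq \Ann(x) + J$ for $n \gg 1$, equivalently $I \subseteq \sqrt{\Ann(x) + J}$. Now $\sqrt{\Ann(x) + J} = \bigcap \{\, \p \mid \p \supseteq \Ann(x) + J \,\}$, so it suffices to show every prime $\p$ containing $\Ann(x) + J$ contains $I$ up to radical — more precisely that $I^n \subseteq \p + J = \p$ for such $\p$, i.e. $\p \in \WIJ$. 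Any such $\p$ lies in $V(\Ann(x)) = \Supp(Rx) \subseteq \Supp(M)$, so it contains some minimal prime $\q$ of $M$; by hypothesis $\q \in \WIJ$, and since $\WIJ$ is stable under specialization, $\p \in \WIJ$ as well, giving $I^n \subseteq \p + J$. The care needed here is that the bound $n$ depends on $\p$ a priori, so I would instead argue directly with radicals: $\Min(M) \subseteq \WIJ$ means each minimal prime $\q_k$ of $M$ satisfies $I \subseteq \sqrt{\q_k + J}$; intersecting over the finitely many minimal primes of $\Supp(Rx)$ that lie over $\Ann(x)$, and using $\sqrt{\Ann(x)} = \bigcap \q_k$, one gets $I \subseteq \bigcap_k \sqrt{\q_k + J} = \sqrt{\bigl(\bigcap_k \q_k\bigr) + J}$ after checking the relevant radical identity (here noetherianness and finiteness of $\Min$ on $\Supp(Rx)$ is what makes the uniform $n$ exist), hence $I^n \subseteq \Ann(x) + J$ for some uniform $n$, so $x \in \GIJ(M)$. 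Finally $(\mathrm{2}) \Leftrightarrow (\mathrm{4})$ then makes $(\mathrm{3}) \Rightarrow (\mathrm{2})$ and $(\mathrm{4}) \Rightarrow (\mathrm{3})$ trivially consistent, completing the equivalences.
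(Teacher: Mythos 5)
Your argument is correct and follows essentially the same route as the paper: both rest on the specialization-stability of $\WIJ$, the finitely many minimal primes of $Rx$, and the noetherian fact that a power of $\sqrt{\Ann(x)}$ (or of the product of those minimal primes) lies in $\Ann(x)$. The only differences are organizational — you close the cycle via $(2)\Rightarrow(1)$ and $(1)\Rightarrow(4)$ directly and obtain the uniform exponent through the radical identity $\bigcap_k\sqrt{\q_k+J}=\sqrt{\bigl(\bigcap_k\q_k\bigr)+J}$ (which indeed holds for finitely many ideals), whereas the paper proves $(2)\Rightarrow(4)$, $(1)\Rightarrow(3)$, $(4)\Rightarrow(1)$ and works with explicit powers $I^{ns}\subseteq J+\p_1\cdots\p_s$ and $(\p_1\cdots\p_s)^m\subseteq\Ann(x)$.
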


\begin{proof}
The implications $(\rm 4) \Rightarrow (\rm 3) \Rightarrow (\rm 2)$ are trivial.

\noindent 
$(\rm 2) \Rightarrow (\rm 4)$ : 
For $\p \in \Supp (M)$, there exists $ \q \in \Min (M)$ such that $\q \subseteq \p$.
Since $\q \in \WIJ$,  $I^n \subseteq J+\q \subseteq J+\p $ for an integer $n$. Hence $\p \in \WIJ$. 

\noindent 
$(\rm 1) \Rightarrow (\rm 3)$ : 
If $\p \in \Ass (M)$ then $\p =\Ann(x)$ for some $x \in M$. 
Since $M$ is an $(I, J)$-torsion $R$-module, there exists an integer $n$ such that  $I^n \subseteq J+\Ann (x) = J+\p $. 
Hence $\p \in \WIJ$. 

\noindent 
$(\rm 4) \Rightarrow (\rm 1)$ : 
We have to show that $M \subseteq \GIJ (M)$.
Let $x \in M$, and set $\Min (Rx)=\{ \p_{1}, \ldots ,\p_{s} \}$. 
Since $\Min (Rx) \subseteq \Supp (M) \subseteq \WIJ$, there exists an integer $n$ such that $I^n \subseteq J+\p_{i}$ for all $i$,  thus  $I^{ns} \subseteq J+(\p_{1} \cdots \p_{s})$. 
Now since $\sqrt{\Ann (x)} = \p_{1} \cap \cdots \cap \p_{s} \supseteq 
\p_{1} \cdots \p_{s}$, it follows that $(\p_{1} \cdots \p_{s})^m \subseteq \Ann (x)$ for an integer $m$. 
Therefore we have $I^{mns} \subseteq J+\Ann (x)$. 
Hence $x \in \GIJ (M)$. 
\end{proof}

\begin{cor}\label{KTY1-2}

\par
\noindent 
\begin{enumerate}
\item[{(\rm 1)}] 
For $x \in M$, the following conditions are equivalent.
	\begin{enumerate}
	\item[{(\rm a)}]\ $x \in \GIJ (M)$. \vspace{4pt} 
	\item[{(\rm b)}]\ $\Supp (Rx) \subseteq \WIJ$. \vspace{4pt} 
	\end{enumerate} \vspace{4pt} 
\item[{(\rm 2)}]  
Let $0\to L\to M\to N\to 0$ be an exact sequence of $R$-modules.
Then $M$ is an $(I, J)$-torsion module if and only if $L$ and  $N$ are $(I, J)$-torsion modules.
\end{enumerate}
\end{cor}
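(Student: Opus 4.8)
The plan is to deduce both statements directly from Proposition \ref{KTY1-1}, together with the elementary behaviour of supports under short exact sequences.

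For part $(\rm 1)$, I would apply Proposition \ref{KTY1-1} to the cyclic module $Rx$. The first observation is that the membership $x \in \GIJ(M)$ is governed by the condition $I^{n} \subseteq \Ann(x)+J$ for $n \gg 1$ (see Definition \ref{def-TF}), which depends only on $\Ann(x)$ and is therefore unchanged when $M$ is replaced by the submodule $Rx$; equivalently, $\GIJ(Rx) = Rx \cap \GIJ(M)$, so that $x \in \GIJ(M)$ holds if and only if $\GIJ(Rx) = Rx$, i.e. if and only if $Rx$ is an $(I,J)$-torsion module. Now the equivalence $(\rm 1) \Leftrightarrow (\rm 4)$ of Proposition \ref{KTY1-1}, applied to the module $Rx$, says precisely that $Rx$ is $(I,J)$-torsion if and only if $\Supp(Rx) \subseteq \WIJ$, which yields the desired equivalence of $(\rm a)$ and $(\rm b)$.

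For part $(\rm 2)$, I would use the standard fact that for the exact sequence $0 \to L \to M \to N \to 0$ one has $\Supp(M) = \Supp(L) \cup \Supp(N)$. If $M$ is $(I,J)$-torsion, then $\Supp(M) \subseteq \WIJ$ by Proposition \ref{KTY1-1}, hence $\Supp(L) \subseteq \WIJ$ and $\Supp(N) \subseteq \WIJ$, so both $L$ and $N$ are $(I,J)$-torsion by the same proposition. Conversely, if $L$ and $N$ are $(I,J)$-torsion, then $\Supp(L) \subseteq \WIJ$ and $\Supp(N) \subseteq \WIJ$, whence $\Supp(M) = \Supp(L) \cup \Supp(N) \subseteq \WIJ$, and Proposition \ref{KTY1-1} again gives that $M$ is $(I,J)$-torsion. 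Alternatively, the forward implication can be seen even more directly: the defining condition $I^{n}x \subseteq Jx$ for an element $x$ is internal to the cyclic submodule it generates, so it passes at once to submodules and to quotient modules.

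There is no serious obstacle here, since the statement is a formal consequence of Proposition \ref{KTY1-1}. The only point demanding a moment's care is the identification used in part $(\rm 1)$ of ``$x \in \GIJ(M)$'' with ``$Rx$ is an $(I,J)$-torsion module'', that is, the remark that the torsion condition on an element is detected within the cyclic submodule it generates.
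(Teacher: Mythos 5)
Your proposal is correct and follows the paper's own argument: both parts are deduced from Proposition \ref{KTY1-1} applied to the cyclic module $Rx$ (using that the torsion condition on $x$ is detected in $Rx$, i.e.\ $\GIJ(Rx)=Rx$) and from the identity $\Supp(M)=\Supp(L)\cup\Supp(N)$. Your extra remarks merely spell out details the paper leaves implicit.
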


\begin{proof}
$(\rm 1)$:
 $(\rm a) \Rightarrow (\rm b)$  
The assumption implies that $\GIJ (Rx)=Rx$. 
Thus by Proposition \ref{KTY1-1} we get $\Supp (Rx) \subseteq \WIJ$. 

\noindent 
$(\rm b) \Rightarrow (\rm a)$  
By using Proposition \ref{KTY1-1}, we get $x \in Rx = \GIJ (Rx) \subseteq \GIJ (M)$. 

\noindent 
$(\rm 2)$:  This follows from Proposition \ref{KTY1-1} and the fact that $\Supp (M)=\Supp(L) \cup \Supp(N)$.
\end{proof}

\begin{cor}
If $M$ is an  $(I, J)$-torsion $R$-module, then $M/JM$ is an  $I$-torsion $R$-module. 
The converse holds if $M$ is a finitely generated $R$-module.
\end{cor}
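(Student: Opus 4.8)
The plan is to prove the two implications separately; the first is immediate and the second is where the finiteness hypothesis must be used.

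For the forward implication, assume $M = \GIJ(M)$. I would argue elementwise: given $x \in M$, choose $n$ with $I^n x \subseteq Jx$, which is contained in $JM$, so that $I^n(x + JM) \subseteq JM$; that is, $I^n$ annihilates the image $\overline{x}$ of $x$ in $M/JM$. Hence $\overline{x} \in \GI(M/JM)$, and since $x$ was arbitrary this gives $\GI(M/JM) = M/JM$. No finiteness is needed for this direction.

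For the converse, assume $M$ is finitely generated and $M/JM$ is $I$-torsion. By Proposition \ref{KTY1-1} it suffices to show $\Supp(M) \subseteq \WIJ$. Since $M$ is finitely generated, $\Supp(M/JM) = \Supp(M) \cap V(J)$ (this is where Nakayama's lemma enters), and $M/JM$ being $I$-torsion means $\Supp(M/JM) \subseteq V(I)$; combining these, $\Supp(M) \cap V(J) \subseteq V(I)$. Now fix $\p \in \Supp(M)$. Then $V(\p + J) = V(\p) \cap V(J) \subseteq \Supp(M) \cap V(J) \subseteq V(I)$, so $I \subseteq \sqrt{\p + J}$; because $I$ is finitely generated, some power $I^k$ lies in $\p + J$, that is, $\p \in \WIJ$. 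Therefore $\Supp(M) \subseteq \WIJ$, and $M$ is $(I, J)$-torsion by Proposition \ref{KTY1-1}.

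The only genuinely delicate point — the ``main obstacle'' in the sense of seeing what makes the argument go — is recognizing why finite generation cannot be dropped in the converse: it is used both to get $\Supp(M/JM) = \Supp(M) \cap V(J)$ and to pass from $I \subseteq \sqrt{\p + J}$ to $I^k \subseteq \p + J$. Without it the statement fails; for instance, if $R$ is a domain that is not a field and $I, J$ are nonzero ideals with $I \not\subseteq \sqrt{J}$, then taking $M$ to be the field of fractions of $R$ one has $JM = M$, so $M/JM = 0$ is trivially $I$-torsion, whereas $\GIJ(M) = 0 \neq M$.
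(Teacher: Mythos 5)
Your proposal is correct, but your converse follows a genuinely different route from the paper's. For the forward implication the paper passes through supports (using Proposition \ref{KTY1-1}: $\Supp(M/JM)\subseteq \Supp(M)\cap V(J)\subseteq \WIJ\cap V(J)\subseteq V(I)$), whereas you argue elementwise from $I^n x\subseteq Jx\subseteq JM$; both work, and yours is if anything more elementary. The real divergence is in the converse: the paper works elementwise and its key tool is the Artin--Rees lemma --- for $x\in M$ it picks $n$ with $J^nM\cap Rx\subseteq Jx$, notes that $M/J^nM$ is $I$-torsion because it has the same support as $M/JM$, obtains $I^m x\subseteq J^nM$, and concludes $I^m x\subseteq J^nM\cap Rx\subseteq Jx$. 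You instead stay at the level of supports: finite generation gives $\Supp(M/JM)=\Supp(M)\cap V(J)$ (Nakayama), the $I$-torsion hypothesis places this inside $V(I)$, and then for $\p\in\Supp(M)$ the specialization-closedness of the support yields $V(\p+J)\subseteq V(I)$, hence $I\subseteq\sqrt{\p+J}$ and, $I$ being finitely generated, $I^k\subseteq\p+J$, i.e.\ $\p\in\WIJ$; Proposition \ref{KTY1-1} then finishes. This is a valid replacement: it trades Artin--Rees for Nakayama together with the noetherian hypothesis (needed to raise $I\subseteq\sqrt{\p+J}$ to a power), and it sits naturally alongside the support-theoretic characterizations of Section 1, while the paper's argument is more self-contained at the element level and isolates the use of finite generation in a single Artin--Rees step. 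Your closing counterexample (the fraction field of a domain, with $J\neq 0$ and $I\not\subseteq\sqrt{J}$) correctly shows that the finiteness hypothesis cannot be dropped, a point the paper does not record.
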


\begin{proof}
Since $M$ is an $(I, J)$-torsion $R$-module, we have $\Supp (M) \subseteq \WIJ$.  Thus we get  
$\Supp (M/JM) \subseteq \Supp (M) \cap V(J) \subseteq \WIJ \cap V(J) \subseteq V(I)$. 
Therefore $M/JM$ is $I$-torsion $R$-module. 

Suppose that $M$ is a finitely generated $R$-module, and let $x \in M$. 
We want to show that $x \in \GIJ(M)$. 
By the Artin-Rees lemma, there is an integer $n \geq 0$ such that $J^{n}M \cap Rx \subseteq Jx$. 
Since $M/JM$ is $I$-torsion, we have $\Supp (M/J^{n}M) =\Supp(M/JM)\subseteq V(I)$, therefore  $M/J^{n}M$ is $I$-torsion as well. 
Thus there exists an integer $m\geq 0$ with $I^{m}x \subseteq J^{n}M$. 
Hence it follows that $I^{m}x \subseteq J^{n}M \cap Rx \subseteq Jx$. 
Thus  $x \in \GIJ (M)$, as desired.
\end{proof}

\begin{prop}\label{KTY1-3}
Let  $M$  be an $R$-module. 
Then the equality 
$$
\Ass(M)\cap\WIJ = \Ass (\GIJ (M))
$$
holds. 
In particular, $\GIJ (M) \neq 0$ if and only if  $\Ass(M)\cap\WIJ \neq \emptyset$. 
\end{prop}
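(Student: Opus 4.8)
The plan is to prove the two inclusions $\Ass(\GIJ(M)) \subseteq \Ass(M) \cap \WIJ$ and $\Ass(M) \cap \WIJ \subseteq \Ass(\GIJ(M))$ separately, and then deduce the ``in particular'' clause immediately.

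For the inclusion $\Ass(\GIJ(M)) \subseteq \Ass(M) \cap \WIJ$: since $\GIJ(M)$ is a submodule of $M$, we certainly have $\Ass(\GIJ(M)) \subseteq \Ass(M)$, so it only remains to check that every $\p \in \Ass(\GIJ(M))$ lies in $\WIJ$. Given such a $\p$, write $\p = \Ann(x)$ for some $x \in \GIJ(M)$. Then $Rx \cong R/\p$ is a submodule of the $(I,J)$-torsion module $\GIJ(M)$, hence is itself $(I,J)$-torsion by Corollary \ref{KTY1-2}(2) (or directly from the definition, since $x \in \GIJ(M)$ forces $I^n \subseteq \Ann(x) + J = \p + J$ for $n \gg 1$). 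Thus $\p \in \WIJ$.

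For the reverse inclusion $\Ass(M) \cap \WIJ \subseteq \Ass(\GIJ(M))$: take $\p \in \Ass(M) \cap \WIJ$ and write $\p = \Ann(x)$ for some $x \in M$. The point is that $\p \in \WIJ$ means $I^n \subseteq \p + J = \Ann(x) + J$ for $n \gg 1$, which is exactly the condition $x \in \GIJ(M)$ from Definition \ref{def-TF}. Hence $x \in \GIJ(M)$, and since $\p = \Ann(x)$ with $x$ a nonzero element of $\GIJ(M)$, we get $\p \in \Ass(\GIJ(M))$. Finally, the ``in particular'' statement follows because a module is nonzero if and only if it has a nonempty set of associated primes (using that $R$ is noetherian): $\GIJ(M) \neq 0 \iff \Ass(\GIJ(M)) \neq \emptyset \iff \Ass(M) \cap \WIJ \neq \emptyset$.

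I do not anticipate a genuine obstacle here; the proof is essentially a direct unwinding of the definition of $\GIJ$ in terms of annihilators together with the definition of $\WIJ$. The only mild subtlety is making sure the element $x$ witnessing $\p \in \Ass(M)$ can be taken to lie in $\GIJ(M)$ — but this is automatic once one observes that the defining condition $I^n \subseteq \Ann(x) + J$ of membership in $\GIJ(M)$ coincides verbatim with the condition $\Ann(x) = \p \in \WIJ$. One should also remark that the equality could alternatively be packaged as: localizing, $\GIJ(M)$ picks out precisely those associated primes of $M$ that are in $\WIJ$.
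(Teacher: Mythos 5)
Your proof is correct and follows essentially the same route as the paper: the forward inclusion comes from the submodule relation together with the fact that $\Ass$ of an $(I,J)$-torsion module lies in $\WIJ$ (the paper cites Proposition \ref{KTY1-1} where you unwind the definition directly, which amounts to the same thing), and the reverse inclusion is the identical observation that $\p=\Ann(x)\in\WIJ$ forces $x\in\GIJ(M)$. The ``in particular'' clause is handled the same way via nonemptiness of $\Ass$ over a noetherian ring.
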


\begin{proof}
Since $\GIJ (M)$ is an $(I, J)$-torsion $R$-module, we have  $\Ass (\GIJ (M)) \subseteq \WIJ$ by Proposition \ref{KTY1-1}.
Thus the inclusion $\Ass(M) \cap \WIJ \supseteq \Ass (\GIJ (M))$ is obvious. 

To prove the converse inclusion, take  $\p \in \Ass(M)\cap\WIJ$. 
Then there is an element  $x (\not= 0) \in M$  with  $\p = \Ann (x)$ and an integer $n$  with  $I^n \subseteq J+\p$.
Thus  $I ^n \subseteq J + \Ann (x)$, hence  $x \in \GIJ (M)$.
Since  $\p = \Ann (x)$, we have  $\p \in \Ass (\GIJ (M))$. 
\end{proof}

For a prime ideal  $\p \in \Spec (R)$, we denote by  $E(R/\p)$  the injective hull of the $R$-module $R/\p $. 

\begin{prop}\label{KTY1-4}
Let  $\p \in \Spec (R)$. 
If $\p \in \WIJ$, then  $E(R/\p )$ is an $(I, J)$-torsion $R$-module. 
On the other hand, if $\p \not \in \WIJ$ then $E(R/\p )$ is an $(I, J)$-torsion-free $R$-module. 
\end{prop}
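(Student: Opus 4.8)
The plan is to prove the two assertions separately, using the characterization of $(I,J)$-torsion modules from Proposition \ref{KTY1-1}, namely that an $R$-module $M$ is $(I,J)$-torsion if and only if $\Ass(M) \subseteq \WIJ$, together with the well-known fact that $\Ass(E(R/\p)) = \{\p\}$.

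\medskip

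First suppose $\p \in \WIJ$. Since $\Ass(E(R/\p)) = \{\p\} \subseteq \WIJ$, Proposition \ref{KTY1-1} (implication $(\rm 3) \Rightarrow (\rm 1)$) immediately gives that $E(R/\p)$ is $(I,J)$-torsion. This direction is essentially a one-line deduction.

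\medskip

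Now suppose $\p \notin \WIJ$; I want to show $\GIJ(E(R/\p)) = 0$. Let $E = E(R/\p)$ and let $x$ be a nonzero element of $\GIJ(E)$. By Proposition \ref{KTY1-3} applied to $\GIJ(E) = \GIJ(E)$, we have $\Ass(\GIJ(E)) = \Ass(E) \cap \WIJ = \{\p\} \cap \WIJ = \emptyset$, so $\GIJ(E)$ has no associated primes; since any nonzero module over a noetherian ring has an associated prime, this forces $\GIJ(E) = 0$. So in fact the cleanest route is to invoke Proposition \ref{KTY1-3} directly: $\GIJ(E(R/\p)) \neq 0$ would imply $\Ass(E(R/\p)) \cap \WIJ \neq \emptyset$, i.e. $\p \in \WIJ$, contrary to hypothesis. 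Hence $\GIJ(E(R/\p)) = 0$ and $E(R/\p)$ is $(I,J)$-torsion-free.

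\medskip

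I do not anticipate a genuine obstacle here: the statement is a direct corollary of Propositions \ref{KTY1-1} and \ref{KTY1-3} once one recalls $\Ass(E(R/\p)) = \{\p\}$. The only point requiring a word of care is making sure one is entitled to use $\Ass(E(R/\p)) = \{\p\}$, which holds because $E(R/\p)$ is an essential extension of $R/\p$ and $\p$ is prime (so $R/\p$ has $\Ass = \{\p\}$, and essential extensions do not enlarge the set of associated primes of an injective hull in this case); this is standard and may be cited without proof. Thus the write-up will be short, with the first assertion handled via Proposition \ref{KTY1-1} and the second via Proposition \ref{KTY1-3}.
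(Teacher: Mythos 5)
Your proof is correct and follows essentially the same route as the paper: the first assertion via Proposition \ref{KTY1-1} using $\Ass(E(R/\p))=\{\p\}\subseteq\WIJ$, and the second via Proposition \ref{KTY1-3}, since $\Ass(E(R/\p))\cap\WIJ=\{\p\}\cap\WIJ=\emptyset$ forces $\GIJ(E(R/\p))=0$. No gaps; the only extra material in your write-up (the justification of $\Ass(E(R/\p))=\{\p\}$) is standard and rightly cited without proof.
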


\begin{proof}
If  $\p \in \WIJ$, then $\Ass (E(R/\p ))=\{ \p \} \subseteq \WIJ$. 
Therefore $\GIJ (E(R/\p ))=E(R/\p )$ by Proposition \ref{KTY1-1}. 
Contrarily, if $\p \not \in \WIJ$, then $\Ass (E(R/\p )) \cap \WIJ = \{ \p \} \cap \WIJ =\emptyset $. 
Therefore, by Proposition \ref{KTY1-3}, we have $\GIJ (E(R/\p ))=0$.
\end{proof}

\begin{prop}\label{KTY1-5-1}
Let $M$ be an $(I, J)$-torsion $R$-module. 
Then there exists an injective resolution of $M$ in which
each term is an $(I, J)$-torsion $R$-module.
\end{prop}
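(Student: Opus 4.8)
The plan is to construct the desired resolution one term at a time, the crucial point being that the injective hull of an $(I,J)$-torsion module is again $(I,J)$-torsion.

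First I would record the following key fact: if $N$ is an $(I,J)$-torsion $R$-module, then its injective hull $E(N)$ is $(I,J)$-torsion. Indeed, since $R$ is noetherian, $E(N)$ is isomorphic to a direct sum of copies of modules of the form $E(R/\p)$ with $\p$ ranging over $\Ass(N)$. By Proposition \ref{KTY1-1} the hypothesis that $N$ is $(I,J)$-torsion gives $\Ass(N)\subseteq\WIJ$, so each such $E(R/\p)$ is $(I,J)$-torsion by Proposition \ref{KTY1-4}. A direct sum of $(I,J)$-torsion modules is itself $(I,J)$-torsion (an element of the direct sum is supported on finitely many coordinates, so one may take a common exponent $n$ with $I^{n}x\subseteq Jx$; alternatively, $\Ass$ of a direct sum is the union of the individual $\Ass$ sets, and one applies Proposition \ref{KTY1-1} again). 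Hence $E(N)$ is $(I,J)$-torsion.

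Next I would build the resolution by induction. Put $M^{0}:=M$. Assuming $M^{k}$ has been constructed and is $(I,J)$-torsion, set $E^{k}:=E(M^{k})$ and $M^{k+1}:=\Coker(M^{k}\hookrightarrow E^{k})$. By the key fact $E^{k}$ is an $(I,J)$-torsion injective module, and applying Corollary \ref{KTY1-2}$(\rm 2)$ to the short exact sequence $0\to M^{k}\to E^{k}\to M^{k+1}\to 0$ shows that $M^{k+1}$ is $(I,J)$-torsion, so the induction continues. Splicing these short exact sequences produces an injective resolution $0\to M\to E^{0}\to E^{1}\to\cdots$ each of whose terms is $(I,J)$-torsion, which is exactly what is wanted.

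I do not anticipate a serious obstacle; the only point requiring a little care is the behaviour of $\Ass$ and of $(I,J)$-torsion under (possibly infinite) direct sums in the key fact, and this is routine. One could also bypass the induction altogether by taking a minimal injective resolution $0\to M\to E^{0}\to E^{1}\to\cdots$ and noting that any prime $\p$ for which $E(R/\p)$ occurs as a summand of some $E^{i}$ must lie in $\Supp(M)$, hence in $\WIJ$ by Proposition \ref{KTY1-1}, so that each $E^{i}$ is $(I,J)$-torsion by Proposition \ref{KTY1-4}.
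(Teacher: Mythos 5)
Your proposal is correct and follows essentially the same route as the paper: build the resolution inductively by embedding each cokernel (which is $(I,J)$-torsion by Corollary \ref{KTY1-2}$(\rm 2)$) into its injective hull, the hull being $(I,J)$-torsion because its associated primes are those of the module and hence lie in $\WIJ$. The only cosmetic difference is that the paper applies Proposition \ref{KTY1-1} directly via $\Ass(E(M))=\Ass(M)$, whereas you decompose the hull into copies of $E(R/\p)$ and invoke Proposition \ref{KTY1-4} together with closure under direct sums.
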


\begin{proof}
First note that the injective hull $E^{0}$ of $M$ is also an $(I, J)$-torsion module. 
In fact, since $M$ is $(I, J$)-torsion, we have  $\Ass (E^{0})=\Ass (M) \subseteq \WIJ$ by Proposition \ref{KTY1-1}.  
Hence  $E^{0}$ is $(I, J$)-torsion. 
Thus we see that $M$ can be embedded in an $(I, J)$-torsion injective $R$-module $E^{0}$.

Suppose, inductively, we have constructed an exact sequence
\[ \minCDarrowwidth2pc \begin{CD} 0 @>>> M @>>> E^{0} @>>> \cdots @>>> E^{n-1} @>{d^{n-1}}>> E^{n} \end{CD} \]
of $R$-modules in which $E^{0},\ldots ,E^{n-1},E^{n}$ are $(I, J)$-torsion injective $R$-modules. 
Let $C$  be the cokernel of the map $d^{n-1}$. 
Since $E^{n}$ is an $(I, J)$-torsion module, $C$ is $(I, J)$-torsion as well by  Corollary \ref{KTY1-2} $(\rm 2)$. 
Applying the argument in  the first paragraph to $C$, we can embed $C$ into an $(I, J)$-torsion injective $R$-module $E^{n+1}$. 
This completes the proof by induction. 
\end{proof}

\begin{cor}\label{KTY1-5}
Let  $M$  be an $R$-module.

\par
\noindent
\begin{enumerate}
\item[{(\rm 1)}]
If $M$ is an $(I, J)$-torsion $R$-module, then $\HIJ ^{i} (M)=0$ for all $i>0$. 
 \vspace{4pt} 

\item[{(\rm 2)}]
$\HIJ ^{i} (\GIJ (M))=0$ for $i>0$.  \vspace{4pt} 

\item[{(\rm 3)}]
$M/\GIJ (M)$ is an $(I, J)$-torsion-free $R$-module. \vspace{4pt} 

\item[{(\rm 4)}]
There is an isomorphism  $\HIJ ^{i} (M) \cong \HIJ ^{i} (M/\GIJ (M))$ for all $i>0$. \vspace{4pt} 

\item[{(\rm 5)}]
$\HIJ ^i (M)$ is an $(I, J)$-torsion $R$-module for any integer $i\geq 0$.
\end{enumerate}
\end{cor}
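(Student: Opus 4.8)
The plan is to first record two observations that drive all five parts, and then dispatch the statements in turn. The first observation is that $\GIJ(N)$ is an $(I,J)$-torsion module for \emph{every} $R$-module $N$: by Corollary \ref{KTY1-2}\,(1) each cyclic submodule $Rx$ with $x\in\GIJ(N)$ satisfies $\Supp(Rx)\subseteq\WIJ$, so $\Supp(\GIJ(N))=\bigcup_{x\in\GIJ(N)}\Supp(Rx)\subseteq\WIJ$, and Proposition \ref{KTY1-1} gives the claim. (Alternatively this amounts to $\GIJ(\GIJ(N))=\GIJ(N)$, a special case of Proposition \ref{replace IJ}\,(5).) The second observation is that every subquotient of an $(I,J)$-torsion module is again $(I,J)$-torsion, since submodules and quotients of such modules are $(I,J)$-torsion by Corollary \ref{KTY1-2}\,(2) and a subquotient is a quotient of a submodule.

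For (1), I would use Proposition \ref{KTY1-5-1} to choose an injective resolution $0\to M\to E^{0}\to E^{1}\to\cdots$ in which every $E^{i}$ is $(I,J)$-torsion; then $\GIJ(E^{i})=E^{i}$, so applying $\GIJ$ leaves the deleted complex $E^{0}\to E^{1}\to\cdots$ unchanged, and because the original resolution is exact this complex has cohomology $M$ in degree $0$ and zero in positive degrees. Since $\HIJ^{i}$ may be computed from any injective resolution, $\HIJ^{i}(M)=0$ for $i>0$. Statement (2) is then immediate, as $\GIJ(M)$ is $(I,J)$-torsion by the first observation. For (5), take an arbitrary injective resolution $M\to E^{\bullet}$; then $\HIJ^{i}(M)$ is by construction a subquotient of $\GIJ(E^{i})$, which is $(I,J)$-torsion by the first observation, hence $\HIJ^{i}(M)$ is $(I,J)$-torsion by the second observation (the case $i=0$ being $\HIJ^{0}(M)=\GIJ(M)$).

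For (3), suppose $\overline{x}\in\GIJ(M/\GIJ(M))$ and lift it to $x\in M$; in the exact sequence $0\to Rx\cap\GIJ(M)\to Rx\to R\overline{x}\to 0$ one has $\Supp(Rx\cap\GIJ(M))\subseteq\WIJ$ (a submodule of the $(I,J)$-torsion module $\GIJ(M)$) and $\Supp(R\overline{x})\subseteq\WIJ$ (by Corollary \ref{KTY1-2}\,(1)), so $\Supp(Rx)\subseteq\WIJ$, whence $x\in\GIJ(M)$ and $\overline{x}=0$; thus $M/\GIJ(M)$ is $(I,J)$-torsion-free. For (4), apply the long exact sequence of local cohomology with respect to $(I,J)$ to $0\to\GIJ(M)\to M\to M/\GIJ(M)\to 0$: by (2) the modules $\HIJ^{i}(\GIJ(M))$ vanish for all $i\geq 1$, so the sequence collapses to isomorphisms $\HIJ^{i}(M)\cong\HIJ^{i}(M/\GIJ(M))$ for every $i\geq 1$.

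Everything here is routine once Corollary \ref{KTY1-2} and Proposition \ref{KTY1-5-1} are available; the only place needing a genuine (if small) argument is (3), where one must climb back from the quotient $M/\GIJ(M)$ to $M$ itself, and the support bookkeeping above does exactly that. Beyond this I do not anticipate any real obstacle.
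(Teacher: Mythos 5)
Your proof is correct and follows essentially the same route as the paper: part (1) via Proposition \ref{KTY1-5-1}, part (2) as its special case, part (4) from the long exact sequence together with (2), and part (5) by observing that $\HIJ^{i}(M)$ is a subquotient of the $(I,J)$-torsion module $\GIJ(E^{i})$. The only divergence is (3): the paper extracts $\GIJ(M/\GIJ(M))=0$ from the exact sequence $0\to\GIJ(\GIJ(M))\to\GIJ(M)\to\GIJ(M/\GIJ(M))\to 0$ (whose right-hand exactness uses the vanishing in (2)), whereas you lift an element and argue directly with supports of cyclic modules via Corollary \ref{KTY1-2}; both arguments are valid, yours being slightly more elementary since it avoids the cohomological input for this part.
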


\begin{proof}
(\rm 1) follows from Proposition \ref{KTY1-5-1}. 
Since $\GIJ (M)$ is an $(I, J)$-torsion $R$-module, (2) follows from (1). 

From the obvious exact sequence 
\[ 0 \to \GIJ (M) \to M \to M/\GIJ (M) \to 0 \]
we have an exact sequence  
$$
0 \to \GIJ(\GIJ (M)) \to \GIJ (M) \to \GIJ (M/\GIJ (M)) \to 0
$$
and isomorphisms 
$$ 
\HIJ ^i (M) \ \  {\cong} \ \ \HIJ ^i(M/\GIJ (M)) \qquad \text{for $i\geq 1$},  
$$
since  $\HIJ ^i (\GIJ(M)) =0$  for $i>0$. 
It follows from this that (3) and (4) hold. 

Since $\HIJ ^{i} (M) \ (i\geq 0)$ is a subquotient of an $(I, J)$-torsion module, it is also $(I, J)$-torsion by Corollary \ref{KTY1-2}, hence (5) holds. 
\end{proof}

\begin{rem}
In Corollary \ref{KTY1-5} $(\rm 1)$, the converse holds if $R$ is a local ring and $M$ is a finitely generated $R$-module. 
Namely, if $\HIJ ^{i} (M)=0$ for all integer $i>0$, then   $M$ is an $(I, J)$-torsion $R$-module. 
(See Corollary \ref{KTY1-7} below. )
\end{rem}


\section{ \v{C}ech Complexes}

In this section we present a generalization of \v{C}ech complexes. The main purpose is to show that the local cohomology modules with respect to  $(I, J)$  are obtained as the homologies of the generalized \v{C}ech complexes.

As before,  $I$, $J$  denote ideals of  a commutative noetherian ring $R$.

\begin{df}
For an element  $a \in R$,  let $S_{a,J}$ be the subset of  $R$  consisting of all elements of the form  $a^{n} + j$  where  $n \in \N$ and  $j\in J$. 
$$
S_{a, J} = \{ a ^n + j \ | \ n \in \N, \ j \in J \}
$$
Note that  $S_{a, J}$ is a multiplicatively closed subset of  $R$. 
For an $R$-module $M$, we denote by $M_{a, J}$ the module of fractions of $M$ with respect to $S_{a, J}$. 
$$
M_{a, J} = S_{a, J}^{-1} M 
$$
\end{df}

\begin{df}\label{GC}
For an element  $a \in R$, the complex $C^{\bullet }_{a, J}$ is defined as  
\[ 
C^{\bullet }_{a, J} = ( 0 \to R \to R_{a, J} \to 0 ), 
\]
where  $R$ is sitting in the $0$th position and $R_{a, J}$ in the $1$st position in the complex. 
For a sequence $\aa =a_{1}, \dots , a_{s}$ of elements of $R$, 
we define a complex $C^{\bullet }_{\aa, J}$ as follows:
\begin{align*}
 C^{\bullet }_{\aa, J}    
&= \bigotimes ^{s}_{i=1} C^{\bullet }_{a_{i}, J}\\
&= \left( 0 \to R \to \prod ^{s}_{i=1} R_{a_{i}, J} 
		       \to \prod _{i<j} (R_{a_{i}, J})_{a_{j}, J}  
\to \cdots \to ( \cdots (R_{a_{1}, J}) \cdots ) _{a_{s}, J} \to 0  \right)
\end{align*}
\end{df}

It is easy to see that if  $J=0$, then   $C^{\bullet }_{\aa, J}$ coincides with  the ordinary \v{C}ech complex $C^{\bullet }_{\aa}$ with respect to $\aa=a_{1}, \ldots , a_{s}$.

The following result gives some basic properties of the generalized \v{C}ech complexes.

\begin{prop}\label{basic Cech}
Let  $a \in R$. 

\par
\noindent
\begin{enumerate}
\item[{(\rm 1)}]
$S_{a, J}$ contains  $0$  if and only if $a \in \sqrt{J}$. 
\vspace{4pt}
\item[{(\rm 2)}]
If $a \in \sqrt{J}$, then $C^{\bullet }_{a, J} \cong  R$  as chain complexes. 
\vspace{4pt}
\item[{(\rm 3)}]
A prime ideal $\p$ belongs to $\WIJ$ if and only if $\p \cap S_{a, J}\neq \emptyset$ for any  $a \in I$. 
\vspace{4pt}
\item[{(\rm 4)}]
If $a \in I$, then $\HIJ ^{i}(M_{a, J})=0$ for all $i\geq 0$. 
\vspace{4pt}
\item[{(\rm 5)}]
If  $\sqrt{I}=\sqrt{(a_{1}, a_{2}, \ldots ,a_{s})}$, then the sequence 
\[ 0 \to \GIJ (M) \to M \to \prod^{s}_{i=1} M_{a_{i}, J} \]
is exact. 
\end{enumerate}
\end{prop}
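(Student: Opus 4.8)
The plan is to treat the five items in order, since several depend on the earlier ones. For (1), I would simply observe that $0 \in S_{a,J}$ means $a^n + j = 0$ for some $n$ and some $j \in J$, i.e. $a^n \in J$, which is precisely $a \in \sqrt{J}$; conversely if $a^n \in J$ take $j = -a^n$. For (2), once $a \in \sqrt{J}$ we know $S_{a,J} \ni 0$, so $R_{a,J} = S_{a,J}^{-1}R = 0$, and therefore $C^{\bullet}_{a,J} = (0 \to R \to 0 \to 0)$ which is just $R$ sitting in degree $0$.

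For (3) I would argue both directions. If $\p \in \WIJ$, then $I^n \subseteq J + \p$ for some $n$; given $a \in I$ write $a^n = j + p$ with $j \in J$, $p \in \p$, so $a^n - j = p \in \p \cap S_{a,J}$. Conversely, suppose $\p \cap S_{a,J} \neq \emptyset$ for every $a \in I$; picking generators $a_1, \dots, a_t$ of $I$ we get for each $k$ an integer $n_k$ and $j_k \in J$ with $a_k^{n_k} - j_k \in \p$, hence $a_k^{n_k} \in J + \p$ for all $k$, and raising the sum of the generators to a high enough power shows $I^N \subseteq J + \p$ for $N = \sum n_k$ (or use that $\sqrt{J+\p}$ contains each $a_k$), so $\p \in \WIJ$. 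For (4), the quickest route is to note that $S_{a,J}^{-1}$ is an exact functor commuting with the formation of injective resolutions up to the localized injectives, and more directly that $M_{a,J}$ is a module over $R_{a,J}$ in which $a$ becomes a unit modulo $J$ — concretely, the element $a/1 - j/1$ is a unit in $R_{a,J}$ for a suitable $j\in J$; hence $\GIJ$ of any localization at $S_{a,J}$ vanishes, because an element $x/s$ with $I^m x/s \subseteq J x/s$ combined with invertibility forces $x/s = 0$. Cleaner: show $\GIJ(N) = 0$ for every $S_{a,J}^{-1}R$-module $N$ when $a \in I$, using Proposition \ref{KTY1-1} by checking $\Ass(N) \cap \WIJ = \emptyset$; indeed any associated prime $\p$ of such an $N$ is disjoint from $S_{a,J}$, so by (3) it cannot lie in $\WIJ$. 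Then since right derived functors of $\GIJ$ can be computed from an injective resolution of $M_{a,J}$ consisting of localizations (injectives over a localization stay injective, and one checks the relevant injective hulls are $S_{a,J}^{-1}R$-modules), all $\HIJ^i(M_{a,J})$ vanish; alternatively invoke Corollary \ref{KTY1-5}(1) after noting $M_{a,J}$ is $(I,J)$-torsion-free is \emph{not} what we want — rather it has no $(I,J)$-torsion \emph{subquotients in the relevant sense}, so I will phrase (4) via: an injective $R$-module that is an $S_{a,J}^{-1}R$-module is $(I,J)$-torsion-free, and $M_{a,J}$ admits an injective resolution by such modules, whence $\GIJ$ applied to it is identically zero in every degree.

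For (5), I would reduce to the case $I = (a_1, \dots, a_s)$ using Proposition \ref{replace IJ}(7) and Proposition \ref{KTY1-1}/Corollary \ref{KTY1-2}, which guarantee that replacing $I$ by a radical-equivalent ideal changes neither $\GIJ$ nor $\WIJ$. Exactness at $M$ (i.e. injectivity of $M \to \prod M_{a_i,J}$ modulo $\GIJ(M)$) amounts to: $x$ maps to $0$ iff for each $i$ there is $s_i \in S_{a_i,J}$ with $s_i x = 0$, i.e. $a_i^{n_i} x \in Jx$ for each $i$; then as in the proof of Proposition \ref{replace IJ}(4) one gets $(a_1, \dots, a_s)^N x \subseteq Jx$ for $N = \sum n_i$, so $I^N x \subseteq Jx$ and $x \in \GIJ(M)$; the converse is immediate since $x \in \GIJ(M)$ gives $a_i^{n} x \in Jx \subseteq Jx$ hence $x/1 = 0$ in each $M_{a_i,J}$ after clearing by the appropriate element of $S_{a_i,J}$ — here I must be slightly careful that $a_i^n x \in Jx$ really produces an annihilating element of $S_{a_i,J}$, namely writing $a_i^n x = jx$ gives $(a_i^n - j)x = 0$ with $a_i^n - j \in S_{a_i,J}$.

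The step I expect to be the main obstacle is (4): making rigorous the claim that $\HIJ^i(M_{a,J}) = 0$ for $i \geq 0$, since it requires knowing that one can compute these derived functors using an injective resolution of $M_{a,J}$ whose terms are again $S_{a,J}^{-1}R$-modules (equivalently, that the injective hull over $R$ of an $S_{a,J}^{-1}R$-module is itself an $S_{a,J}^{-1}R$-module, which follows because its associated primes miss $S_{a,J}$ and hence it is already $S_{a,J}$-divisible). Once that structural point is in place — combined with the observation from (3) that such modules are $(I,J)$-torsion-free — the vanishing is automatic. Everything else is a direct unwinding of definitions together with the already-established Propositions \ref{replace IJ}, \ref{KTY1-1} and Corollary \ref{KTY1-2}.
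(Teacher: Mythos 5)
Your proposal is correct and takes essentially the same route as the paper: items (1), (2), (3) and (5) are the same unwindings of the definitions (your reduction in (5) via Proposition \ref{replace IJ}(7) is a harmless repackaging), and your (4) differs only cosmetically from the paper's, which localizes an injective resolution of $M$ and decomposes the terms into copies of $E(R/\p)$, whereas you resolve $M_{a,J}$ by injectives that are $S_{a,J}^{-1}R$-modules; both versions hinge on exactly the same two facts, namely the structure of injective modules and item (3) applied to the element $a\in I$. The only point to tighten in (4) is the phrase ``$S_{a,J}$-divisible'': what you need (and what your associated-prime observation does give, via the decomposition into $E(R/\p)$ with $\p\cap S_{a,J}=\emptyset$) is that elements of $S_{a,J}$ act bijectively on the injective hull, so this is a wording issue rather than a gap.
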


\begin{proof}

$(\rm 1)$  
If $0 \in S_{a, J}$, then  $0= a^{n}+j$  for an integer $n$ and $j \in J$. 
Then, since  $a^{n}=-j \in J$, we have $a\in \sqrt{J}$. 
Conversely, if $a \in \sqrt{J}$, then there is an integer $n\geq 0$ such that $a^{n}=j$  belongs to $J$. 
Thus $0=a^{n}+(-j) \in S_{a, J}$. 

\noindent
$(\rm 2)$ 
Suppose  $a \in \sqrt{J}$. 
It then follows from  $(\rm 1)$  that  $0 \in S_{a, J}$. 
Thus $R_{a,J}=0$, hence  $C^{\bullet }_{a, J}=(0\to R \to 0)$ from the definition. 

\noindent
$(\rm 3)$  
Assume  $\p \in \WIJ$  and take an element  $a \in I$. 
Then  $I^{n}\subseteq J+\p$  for an integer $n\geq 0$. 
Since $a^{n} \in I^{n}\subseteq J+\p$, there exist $j \in J$ and $c \in \p$ such that $a^{n}=j+c$. 
Thus we have $c=a^{n}+(-j)\in \p \cap S_{a, J}$. 

Conversely, assume  $\p \cap S_{a, J}\neq \emptyset$ for any $a \in I$.
Corresponding to each $a \in I$, we find an element  $c(a) \in \p \cap S_{a, J}$, which is of the form  $c(a) = a^{n(a)} + j(a)$  for an integer $n(a)$ and  $j(a) \in J$. 
Thus $a^{n(a)}=-j(a)+c(a) \in J+\p$. 
Since this is true for any $a \in I$, and since $I$ is finitely generated, we see  $I ^n \subseteq J + \p$ for some $n$,  hence $\p \in W(I, J)$.

\noindent
$(\rm 4)$  
Let $E^{\bullet }$ be an injective resolution of an $R$-module  $M$. 
Then $(E^{\bullet })_{a, J}$ is an $R$-injective resolution of  $M_{a, J}$. 
Hence  $\HIJ ^{i}(M_{a, J})=H^{i} (\GIJ ((E^{\bullet })_{a, J}))$. 
Describing  each $E^i$  as a direct sum of indecomposable injective modules  
$E^{i}=\bigoplus  _{\p \in \Spec (R)} E_{R}(R/\p)^{\mu_{i} (\p, M)}$, we have
\[ (E^{i})_{a, J}=\bigoplus _{\p \in \Spec (R)} E_{R}(R/\p)_{a, J} ^{\mu_{i} (\p, M) }
=\bigoplus _{\p \in \Spec (R)} E_{R_{a, J}}(R_{a, J}/\p R_{a, J})^{\mu_{i} (\p, M)}. \]
Therefore the following equality follows from  $(\rm 3)$ and the assumption  $a \in I$.  
\[ \GIJ ((E^{i})_{a, J})=\bigoplus _{\p \in W(I, J)} E_{R_{a, J}}(R_{a, J}/\p R_{a, J}) ^{\mu _{i}(\p,M)}=0 \]
It follows  $\HIJ ^{i}(M_{a, J})=0$.

\noindent
$(\rm 5)$ 
 It is enough to show that 
$x\in \G _{I, J} (M)$ if and only if $x \in \Ker (M\to \Pi ^{s}_{i=1} M_{a_{i}, J})$. 
Let $x\in \G _{I, J} (M)$. Then there exists an integer $n\geq 0$ such that 
$a_{i}^{n} x \in Jx$ for all $a_{i}$. Therefore, since $(a_{i}^{n}-b_{i})x=0$ for some $b_{i} \in J$ and  
$a_{i}^{n}-b_{i} \in S_{a_{i},J}$, we have $x \in \Ker (M\to \Pi ^{s}_{i=1} M_{a_{i}, J})$. 
Conversely, if $x \in \Ker(M\to \Pi ^{s}_{i=1} M_{a_{i}, J})$, 
then for each $i$ there exist an integer $n_{i}\geq 0$ and $b_{i} \in J$ such that $(a_{i}^{n_{i}}-b_{i})x=0$. 
Thus  $a_i^{n_i} x \in J x$ for each $i$. 
This shows that $I^{n}x \subseteq  Jx$ for a large integer $n$. 
Thus we have $x\in \GIJ (M)$.  
\end{proof}

\begin{thm}\label{L=C}
Let $M$ be an $R$-module, and let  $\aa =a_{1}, \dots , a_{s}$ be a sequence of elements of $R$  which generate $I$. 
Then there is a natural isomorphism $\HIJ ^{i} (M) \cong H^{i} (C^{\bullet }_{\aa, J}\otimes _{R} M)$   for any integer $i$.
\end{thm}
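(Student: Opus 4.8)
The plan is to realize both $\HIJ^{i}(M)$ and $H^{i}(C^{\bullet}_{\aa,J}\otimes_{R}M)$ as the cohomology of one and the same total complex, computed by the two spectral sequences of a double complex. Fix an injective resolution $M\to E^{\bullet}$ and form the double complex $X^{p,q}=(C^{\bullet}_{\aa,J})^{p}\otimes_{R}E^{q}$, which is concentrated in degrees $0\le p\le s$ and $q\ge 0$ (here $s$ is the length of $\aa$), so both spectral sequences of its total complex converge.

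Before running them I would record two facts. First, each term of $C^{\bullet}_{\aa,J}$ is a finite direct sum of iterated localizations of $R$, hence a flat $R$-module; and since $\aa$ generates $I$, Proposition \ref{basic Cech}(5) identifies $H^{0}(C^{\bullet}_{\aa,J}\otimes_{R}-)$ with $\GIJ$. Second --- this is the crux --- for an injective $R$-module $E$ one has $H^{i}(C^{\bullet}_{\aa,J}\otimes_{R}E)=0$ for all $i>0$. Since tensor products and cohomology commute with direct sums, it suffices to treat $E=E(R/\p)$. If $\p\notin\WIJ$, then by the proof of Proposition \ref{basic Cech}(3) some generator $a_{k}$ satisfies $\p\cap S_{a_{k},J}=\emptyset$; as $E(R/\p)$ is an $R_{\p}$-module, every element of $S_{a_{k},J}$ acts invertibly on it, so $E(R/\p)\to E(R/\p)_{a_{k},J}$ is an isomorphism and the two-term complex $C^{\bullet}_{a_{k},J}\otimes_{R}E(R/\p)$ is acyclic; tensoring it with the bounded complex of flat modules $\bigotimes_{j\neq k}C^{\bullet}_{a_{j},J}$ keeps it acyclic, so $C^{\bullet}_{\aa,J}\otimes_{R}E(R/\p)$ is acyclic (consistent with $\GIJ(E(R/\p))=0$). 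If $\p\in\WIJ$, then by Proposition \ref{basic Cech}(3) each generator $a_{i}$ meets $S_{a_{i},J}$ inside $\p$, and since the annihilator of every nonzero element of $E(R/\p)$ is $\p$-primary one gets $E(R/\p)_{a_{i},J}=0$ for every $i$; hence $(C^{\bullet}_{\aa,J})^{p}\otimes_{R}E(R/\p)=0$ for $p\geq 1$ and $C^{\bullet}_{\aa,J}\otimes_{R}E(R/\p)$ is concentrated in degree $0$, where it is $E(R/\p)=\GIJ(E(R/\p))$. This proves the claim.

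Now compute the total cohomology of $X^{p,q}$ in two ways. Taking cohomology first along $p$: by the claim and the identification of $H^{0}$, the first page is $E_{1}^{0,q}=\GIJ(E^{q})$ with $E_{1}^{p,q}=0$ for $p>0$, so the spectral sequence degenerates and the total cohomology is $H^{n}(\GIJ(E^{\bullet}))=\HIJ^{n}(M)$. Taking cohomology first along $q$: for fixed $p$ the complex $(C^{\bullet}_{\aa,J})^{p}\otimes_{R}E^{\bullet}$ is, by exactness of localization and the fact that localizations of injectives over a noetherian ring stay injective, an injective resolution of $(C^{\bullet}_{\aa,J})^{p}\otimes_{R}M$, so the first page is $E_{1}^{p,0}=(C^{\bullet}_{\aa,J})^{p}\otimes_{R}M$ with $E_{1}^{p,q}=0$ for $q>0$, and the total cohomology is $H^{n}(C^{\bullet}_{\aa,J}\otimes_{R}M)$. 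Comparing the two yields the natural isomorphism $\HIJ^{i}(M)\cong H^{i}(C^{\bullet}_{\aa,J}\otimes_{R}M)$.

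The main obstacle is exactly the vanishing $H^{i}(C^{\bullet}_{\aa,J}\otimes_{R}E)=0$ for injective $E$ and $i>0$; the rest is formal. Inside that step the two delicate points are the case $\p\in\WIJ$ --- where one must see that $E(R/\p)$ is \emph{entirely} $S_{a_{i},J}$-torsion, not merely that $S_{a_{i},J}$ meets $\p$, which uses that annihilators in $E(R/\p)$ are $\p$-primary --- and the propagation of acyclicity through a tensor product of complexes, which rests on the flatness of the terms of $C^{\bullet}_{\aa,J}$. If one prefers to avoid double complexes, the same key vanishing shows that $\{H^{i}(C^{\bullet}_{\aa,J}\otimes_{R}-)\}_{i\geq 0}$ is a $\delta$-functor (the terms of $C^{\bullet}_{\aa,J}$ being flat) that is effaceable in positive degrees and has $\GIJ$ as its $0$-th term, hence is canonically isomorphic to the universal $\delta$-functor $\{\HIJ^{i}\}$.
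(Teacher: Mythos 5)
Your proposal is correct, and it follows the same overall skeleton as the paper (identify $H^{0}(C^{\bullet}_{\aa,J}\otimes -)$ with $\GIJ$ via Proposition \ref{basic Cech}(5), then reduce everything to the vanishing $H^{i}(C^{\bullet}_{\aa,J}\otimes E(R/\p))=0$ for $i>0$), but it handles the central vanishing step by a genuinely different argument. The paper proves that vanishing by induction on $s$: it decomposes $C^{\bullet}_{\aa,J}=C^{\bullet}_{a_{1},J}\otimes C^{\bullet}_{\aa',J}$, invokes the spectral sequence of this tensor decomposition together with the induction hypothesis, and reduces to the two-term case, using Proposition \ref{KTY1-4} to see that $\G_{(\aa'),J}(E(R/\p))$ is $E(R/\p)$ or $0$. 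You instead identify the complex $C^{\bullet}_{\aa,J}\otimes E(R/\p)$ outright by a case analysis on whether $\p\in\WIJ$: if $\p\in\WIJ$ every $S_{a_i,J}$ meets $\p$, and since $S_{a_i,J}$ is multiplicatively closed and annihilators in $E(R/\p)$ are $\p$-primary, all localized terms vanish and the complex sits in degree $0$; if $\p\notin\WIJ$, the finite-generation argument inside the proof of Proposition \ref{basic Cech}(3) (correctly, you need the proof rather than the statement) produces a generator $a_{k}$ with $S_{a_k,J}\cap\p=\emptyset$, so $S_{a_k,J}$ acts invertibly on $E(R/\p)$, the factor $C^{\bullet}_{a_k,J}\otimes E(R/\p)$ is acyclic, and flatness of the remaining Čech terms propagates acyclicity. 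This is a valid and arguably more transparent route: it exhibits the complex explicitly rather than arguing by induction, at the cost of the auxiliary facts about $E(R/\p)$ (that nonzero elements have $\p$-primary annihilators and that elements outside $\p$ act bijectively). Your packaging also differs: you compare the two spectral sequences of the double complex $(C^{\bullet}_{\aa,J})^{p}\otimes E^{q}$, which works (it is a first-quadrant double complex, and flatness alone already gives the column degeneration), but naturality in $M$ then rests on the routine independence-of-resolution argument; the universal $\delta$-functor formulation you mention at the end, which is exactly the paper's framing, delivers naturality for free.
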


\begin{proof}
Note from Proposition \ref{basic Cech} $(\rm 5)$  that there is a functorial isomorphism 
\[H^{0}(C^{\bullet }_{\aa, J} \otimes M) \cong \GIJ (M).\]
Since  $\{ H^{i} (C^{\bullet }_{\aa, J}\otimes _{R} - )\ | \ i \geq 0 \}$  is  a cohomological sequence of functors, 
to prove the theorem we only have to show that $H^{i} (C^{\bullet }_{\aa, J} \otimes E)=0$  for any  $i>0$ and any injective $R$-module $E$.
To prove this, we may assume that $E=E_{R}(R/\p)$ where $\p$ is a prime ideal of $R$.
We proceed by induction on the length $s$ of the sequence $\aa$.

If $s=1$, then 
$$
C^{\bullet }_{\aa, J} \otimes E =  \left( 0 \to  E_R(R/\p) \to E_R(R/\p)_{a_1, J} \to  0 \right),  
$$
where  $E_R(R/\p)_{a_1, J}$  is isomorphic to  $E_R(R/\p)$  if $\p \not\in W((a_1), J)$,  and is $(0)$  if  $\p \in W((a_1), J)$. 
See Propositions \ref{KTY1-4} or \ref{basic Cech}. 
In either case, we have  $H^1 (C^{\bullet }_{\aa, J} \otimes E ) =0$.

Next we assume $s>1$, and set  $\aa ^{\prime} =a_{2}, \ldots , a_{s}$. 
Then we have the equality  $C^{\bullet }_{\aa, J} =C^{\bullet }_{a_{1}, J} \otimes _{R} C^{\bullet }_{\aa ^{\prime}, J}$.
Therefore there is a spectral sequence
$$
E^{p, q}_2 = H^{p}(C^{\bullet }_{a_{1}, J} \otimes H^{q}( C^{\bullet }_{\aa ^{\prime}, J} \otimes E(R/\p))) 
\Rightarrow H^{p+q} (C^{\bullet }_{\aa, J} \otimes E(R/\p)).
$$ 
Since $H^{q} (C^{\bullet }_{\aa ^{\prime}, J} \otimes E(R/\p))=0 $ for $q>0$  by the induction hypothesis, 
the spectral sequence degenerates, and we have isomorphisms 
\begin{align*}
H^{n} (C^{\bullet }_{\aa, J} \otimes E(R/\p))  
		&=H^{n}(C^{\bullet }_{a_{1}, J} \otimes H^{0} (C^{\bullet }_{\aa ^{\prime}, J} \otimes E(R/\p))) \\
		&=H^{n}(C^{\bullet }_{a_{1}, J} \otimes \G _{(\aa ^{\prime}), J} (E(R/\p))) \\
		&=H^{n}(0\to \G _{(\aa ^{\prime}), J} (E(R/\p))\to (\G _{(\aa ^{\prime}), J} (E(R/\p)))_{a_{1}, J}\to 0). 
\end{align*}
This shows that $H^{n} (C^{\bullet }_{\aa, J} \otimes E(R/\p))=0$ for  $n \geq 2$. 
Note from Proposition \ref{KTY1-4} that  
$\G _{(\aa ^{\prime}),  J} (E(R/\p))$  is either $E(R/\p)$  or $(0)$. 
Therefore it remains to show that $H^{1} (C^{\bullet }_{a_1, J} \otimes E(R/\p))=0$. 
But this is already done in the case $s =1$. 
\end{proof}

\begin{cor}\label{J-tor}
Let $\aa =a_{1}, \dots , a_{s}$ be a sequence of elements of $R$, set $I=(\aa )$ and let  $M$ be a $J$-torsion $R$-module. 
Then there is a natural isomorphism $C^{\bullet}_{\aa, J} \otimes_{R} M \cong C^{\bullet}_{\aa} \otimes_{R} M$. 
Hence $\HIJ ^{i} (M) \cong \HI^{i}(M)$ for any integer $i$.
\end{cor}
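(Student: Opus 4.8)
The natural map to exhibit is the one induced on localizations by the inclusion of multiplicatively closed sets. For a single element $a \in R$ we have $S_a = \{a^n \mid n \in \N\} \subseteq S_{a,J}$, which gives a canonical $R$-algebra map $M_a \to M_{a,J}$ for every $R$-module $M$. The first step is to prove that when $M$ is $J$-torsion this map is an isomorphism. The key point is that every element of $S_{a,J}$ acts as an isomorphism on a $J$-torsion module: given $s = a^n + j$ with $j \in J$ and $x \in M$, there is an integer $k$ with $J^k x = 0$, so writing $a^n = s - j$ and expanding $a^{nk} = (s-j)^k$ modulo $\Ann(x)$ shows $a^{nk} x \in (s)x + J^k x \subseteq (s) x$; hence $s$ is surjective on $Rx$, and a symmetric argument (or a direct kernel computation, using that $s$ and $a$ generate comparable radicals on $Rx$) shows it is injective on $Rx$. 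More cleanly: localization is exact, so it suffices to check that $(Rx)_a \to (Rx)_{a,J}$ is an isomorphism for each $x$, and on the finitely generated, hence $J^k$-annihilated, module $Rx$ one has $s \equiv a^n \bmod{J}$, so the two localizations agree because inverting $s$ is the same as inverting $a^n$ after killing $J^k$. I expect this verification to be the main obstacle — not because it is deep, but because one must be careful that $M$ need not be finitely generated, so the reduction to cyclic (or $J^k$-torsion finitely generated) submodules is essential.

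Once $M_{a,J} \cong M_a$ naturally for $J$-torsion $M$, the second step is purely formal: the complex $C^{\bullet}_{a,J} \otimes_R M$ has $0$th term $M$ and $1$st term $M_{a,J}$, with differential the localization map; under the isomorphism just established this is exactly $C^{\bullet}_a \otimes_R M = (0 \to M \to M_a \to 0)$. Naturality in $M$ must be checked, but it is immediate since every map in sight is induced by the inclusion $S_a \hookrightarrow S_{a,J}$ and all constructions are functorial.

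The third step upgrades this to the sequence $\aa = a_1, \dots, a_s$. By Definition \ref{GC}, $C^{\bullet}_{\aa,J} = \bigotimes_{i=1}^s C^{\bullet}_{a_i,J}$, and similarly $C^{\bullet}_{\aa} = \bigotimes_{i=1}^s C^{\bullet}_{a_i}$. Here one needs that the iterated localizations appearing in the tensor product also match: a module of the form $(\cdots(M_{a_1,J})\cdots)_{a_k,J}$ is a localization of the $J$-torsion module $M$, hence itself $J$-torsion (its support lies in that of $M$, which is contained in $V(J)$), so the single-variable isomorphism applies at each stage and one gets $(\cdots(M_{a_1,J})\cdots)_{a_k,J} \cong (\cdots(M_{a_1})\cdots)_{a_k} = M_{a_1\cdots a_k}$ compatibly. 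Tensoring these isomorphisms together termwise yields $C^{\bullet}_{\aa,J} \otimes_R M \cong C^{\bullet}_{\aa} \otimes_R M$ as complexes, naturally in $M$.

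Finally, taking cohomology gives $H^i(C^{\bullet}_{\aa,J} \otimes_R M) \cong H^i(C^{\bullet}_{\aa} \otimes_R M)$ for all $i$. By Theorem \ref{L=C} applied to the generating sequence $\aa$ of $I$, the left side is $\HIJ^i(M)$; by the classical analogue of that theorem (the ordinary \v{C}ech complex computes ordinary local cohomology, which is the $J=0$ case of Theorem \ref{L=C}), the right side is $\HI^i(M)$. This gives the desired isomorphism $\HIJ^i(M) \cong \HI^i(M)$, and it is natural because each identification in the chain is.
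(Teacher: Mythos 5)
Your proposal is correct and follows essentially the same route as the paper: both reduce to the natural map $M_{a}\to M_{a,J}$ for a single element, show it is an isomorphism when $M$ is $J$-torsion, and then tensor the one-element complexes together (the paper leaves the $J$-torsionness of the iterated localizations implicit, which you rightly make explicit). The only difference is in how that key isomorphism is verified: the paper argues element-wise, using that $a^{n}-b$ divides $a^{2^{\ell}n}-b^{2^{\ell}}$ and that $b^{2^{\ell}}z=0$ for large $\ell$, whereas you pass to finitely generated (hence $J^{k}$-annihilated) submodules and observe that inverting $a^{n}+j$ and inverting $a^{n}$ agree modulo $J^{k}$ since $j$ is nilpotent there — both verifications are sound, so no gap.
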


\begin{proof}
For an element  $a \in I$, there is a natural mapping  $\varphi : M_a \to M_{a, J}$  defined by   $\varphi (z/a^n) = z/a^n$. 
First we show that  $\varphi$ is an isomorphism. 

Suppose that $\varphi ( z/a^{n}) =0 \in M_{a,J}$. 
Then $(a^{m} - b) z=0$  for an integer  $m\geq 0$ and an element $b \in J$. 
Since  $a^m - b$  divides  $(a^{2^{\ell}m}-b^{2^{\ell}})$, we see 
$(a^{2^{\ell}m}-b^{2^{\ell}})z=0$ for all integers $\ell \geq 0$. 
Since $M$ is $J$-torsion, we have $b^{2^{\ell}}z=0$ for a large $\ell$. 
Thus $a^{2^{\ell}m}z = 0$, and we have $z/a^{n}=0 \in M_{a}$, which shows that $\varphi$ is injective.

Let $w=z/(a^{n} - b) \in M_{a, J}$ where $z \in M$  and  $b \in J$. 
Since $M$ is $J$-torsion, there exists an integer $\ell$ such that $b^{2^{\ell}}z=0$. 
Let us write   $a^{2^{\ell}n}-b^{2^{\ell}}= c(a^n - b)$ for an element  $c \in R$. Then we see $a^{2^{\ell}n} z = c(a^n - b)z$  in  $M$. 
Therefore $w=z/(a^{n}-b)=cz/a^{2^{\ell}n} \in M_{a, J}$. 
This shows that $\varphi$ is surjective.

We have shown that  $M_a \cong M_{a,J}$ for any $a \in I$. 
Thus we have  $C^{\bullet}_{a, J} \otimes M \cong C^{\bullet}_{a} \otimes M$ for any $a \in I$. 
Finally we have the isomorphisms of chain complexes: 
\begin{align*}
 C^{\bullet}_{\aa, J} \otimes M 
 &= C^{\bullet}_{a_1, J} \otimes C^{\bullet}_{a_2, J} \otimes \cdots \otimes C^{\bullet}_{a_{s}, J} \otimes M \\
 &\cong C^{\bullet}_{a_1} \otimes C^{\bullet}_{a_2} \otimes \cdots \otimes C^{\bullet}_{a_{s}} \otimes M \\
 &= C^{\bullet}_{\aa} \otimes M
\end{align*}
\end{proof}

We can show the follows from this by Theorem \ref{L=C}.

\begin{prop}\label{comm injlim}
The functors  $\HIJ ^i \ (i\geq 0)$  commute with inductive limits, i.e. if  $\{ M_{\lambda} \ | \ \lambda \in \Lambda \}$  is an inductive system, then there is a natural isomorphism 
$$
\HIJ ^i (\varinjlim _{\lambda} M_{\lambda} ) \cong \varinjlim _{\lambda} \HIJ ^i(M_{\lambda}),   
$$
for any $i \geq 0$.
\end{prop}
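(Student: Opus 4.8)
The plan is to reduce the statement to the analogous fact for generalized \v{C}ech complexes via Theorem \ref{L=C}, and then to combine the two elementary facts that tensoring and taking cohomology each commute with filtered inductive limits.

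First I would fix a finite sequence $\aa = a_1, \dots , a_s$ of elements of $R$ generating $I$. By Theorem \ref{L=C} there is a \emph{natural} isomorphism $\HIJ^i(M) \cong H^i(C^\bullet_{\aa, J}\otimes_R M)$ for every $R$-module $M$ and every integer $i$; naturality is precisely what makes this isomorphism compatible with the transition maps of an inductive system $\{ M_\lambda \mid \lambda \in \Lambda\}$. Hence it suffices to show that the functor $M \mapsto H^i(C^\bullet_{\aa, J}\otimes_R M)$ commutes with $\varinjlim_\lambda$.

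Next I would observe that each term of the complex $C^\bullet_{\aa, J}$ is a finite direct sum of localizations of $R$, namely the modules $(\cdots (R_{a_{i_1},J})\cdots)_{a_{i_k},J}$. Consequently, in each cohomological degree the functor $C^\bullet_{\aa, J}\otimes_R -$ is a finite direct sum of localization functors $(-)_{a,J} = S_{a,J}^{-1}(-)$. Since localization commutes with arbitrary inductive limits and finite direct sums commute with inductive limits, there is a natural isomorphism of complexes $C^\bullet_{\aa,J}\otimes_R \varinjlim_\lambda M_\lambda \cong \varinjlim_\lambda \big(C^\bullet_{\aa,J}\otimes_R M_\lambda\big)$. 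Finally, since filtered inductive limits are exact on the category of $R$-modules, they commute with the formation of kernels and cokernels, hence with cohomology of complexes; therefore
$$
H^i\big(C^\bullet_{\aa,J}\otimes_R \varinjlim_\lambda M_\lambda\big) \;\cong\; H^i\big(\varinjlim_\lambda (C^\bullet_{\aa,J}\otimes_R M_\lambda)\big) \;\cong\; \varinjlim_\lambda H^i\big(C^\bullet_{\aa,J}\otimes_R M_\lambda\big),
$$
and transporting along the natural isomorphism of Theorem \ref{L=C} yields the assertion.

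I do not expect a serious obstacle here: the argument is essentially a bookkeeping of naturality together with two standard homological facts. The one point that deserves a word of care is that an ``inductive system'' must be understood to be indexed by a directed (filtered) set, since it is exactly filteredness of the index set that guarantees exactness of $\varinjlim_\lambda$, and hence its commutation with cohomology; granting that, the proof is routine.
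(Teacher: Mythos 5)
Your proposal is correct and follows essentially the same route as the paper: the authors also invoke Theorem \ref{L=C} and the fact that $C^{\bullet}_{\aa,J}\otimes_R-$ commutes with direct limits, with the exactness of filtered colimits (hence commutation with cohomology) left implicit. Your version merely spells out these standard steps and the naturality/directedness caveats in more detail.
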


\begin{proof}
Since the tensor product commutes with direct limits, we have 
$C^{\bullet}_{\aa, J} \otimes _R (\varinjlim _{\lambda} M_{\lambda}) \cong 
\varinjlim _{\lambda} (C^{\bullet}_{\aa, J} \otimes _R  M_{\lambda} )$. 
The proposition follows from this. 
\end{proof}

The following theorem is a generalization of the base ring
independence theorem for ordinary local cohomology.

\begin{thm}\label{IDT}
Let  $I$  and  $J$  be ideals of  $R$ as before. 
Furthermore, let $\varphi : R \to R^{\prime} $ be a ring homomorphism,  
and let  $M^{\prime}$ be an $R^{\prime}$-module.
Suppose that $\varphi$ satisfies the equality 
$$
\varphi (J) = JR'. 
$$
Then there is a natural isomorphism
$\HIJ ^{i} (M^{\prime} ) \cong H^{i}_{IR^{\prime}, JR^{\prime}} (M^{\prime})$ 
as $R'$-modules for any integer $i\geq 0$.
\end{thm}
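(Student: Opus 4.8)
The plan is to reduce the statement to the \v{C}ech complex description of local cohomology provided by Theorem~\ref{L=C}. Choose elements $a_1, \dots, a_s$ generating $I$, set $\aa = a_1, \dots, a_s$, and let $\varphi(\aa)$ denote the sequence $\varphi(a_1), \dots, \varphi(a_s)$, which generates $IR'$. Regarding $M'$ as an $R$-module via $\varphi$, Theorem~\ref{L=C} (applied over $R$ and over $R'$) gives
$$
\HIJ^i(M') \cong H^i\bigl(C^\bullet_{\aa, J} \otimes_R M'\bigr)
\quad\text{and}\quad
H^i_{IR', JR'}(M') \cong H^i\bigl(C^\bullet_{\varphi(\aa), JR'} \otimes_{R'} M'\bigr),
$$
so the theorem will follow once we produce an isomorphism of complexes of $R'$-modules $C^\bullet_{\aa, J} \otimes_R M' \cong C^\bullet_{\varphi(\aa), JR'} \otimes_{R'} M'$, natural in $M'$ and compatible with differentials.

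The heart of the matter is the single-element case. For $a \in R$ one has $\varphi(a^n + j) = \varphi(a)^n + \varphi(j)$, and the hypothesis $\varphi(J) = JR'$ says precisely that $\{\varphi(j) \mid j \in J\}$ exhausts $JR'$; hence $\varphi$ carries the multiplicatively closed set $S_{a, J}$ \emph{onto} $S_{\varphi(a), JR'}$. Since localizing an $R'$-module at a multiplicatively closed subset $S$ of $R$ coincides with localizing it at $\varphi(S) \subseteq R'$, we obtain $R_{a,J} \otimes_R R' = S_{a,J}^{-1}R' = S_{\varphi(a),JR'}^{-1}R' = R'_{\varphi(a), JR'}$, and this identification carries the localization map $R \to R_{a,J}$ to $R' \to R'_{\varphi(a), JR'}$. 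Therefore $C^\bullet_{a, J} \otimes_R R' \cong C^\bullet_{\varphi(a), JR'}$ as complexes of $R'$-modules. I expect this to be the only genuinely delicate point, and it is exactly here that the hypothesis is needed: with merely $\varphi(J) \subseteq JR'$ one would get only an inclusion $\varphi(S_{a,J}) \subseteq S_{\varphi(a), JR'}$, hence possibly a strictly smaller localization and a different complex.

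From here everything is formal. Since $C^\bullet_{\aa, J} = C^\bullet_{a_1, J} \otimes_R \cdots \otimes_R C^\bullet_{a_s, J}$ and base change along $\varphi$ commutes with tensor products over $R$, applying $- \otimes_R R'$ and using the single-element case $s$ times gives
$$
C^\bullet_{\aa, J} \otimes_R R' \cong \bigl(C^\bullet_{a_1, J}\otimes_R R'\bigr) \otimes_{R'} \cdots \otimes_{R'} \bigl(C^\bullet_{a_s, J}\otimes_R R'\bigr) \cong C^\bullet_{\varphi(a_1), JR'} \otimes_{R'} \cdots \otimes_{R'} C^\bullet_{\varphi(a_s), JR'} = C^\bullet_{\varphi(\aa), JR'}
$$
as complexes of $R'$-modules, compatibly with differentials. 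Then for an $R'$-module $M'$, associativity of the tensor product yields $C^\bullet_{\aa, J} \otimes_R M' \cong \bigl(C^\bullet_{\aa, J} \otimes_R R'\bigr) \otimes_{R'} M' \cong C^\bullet_{\varphi(\aa), JR'} \otimes_{R'} M'$, naturally in $M'$. Passing to cohomology and combining with the two instances of Theorem~\ref{L=C} above completes the proof.
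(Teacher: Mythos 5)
Your proposal is correct and follows essentially the same route as the paper: reduce to the \v{C}ech complex description of Theorem~\ref{L=C} and observe that the hypothesis $\varphi(J)=JR'$ forces $\varphi(S_{a_i,J})=S_{\varphi(a_i),JR'}$, whence $C^{\bullet}_{\aa,J}\otimes_R M'\cong C^{\bullet}_{\varphi(\aa),JR'}\otimes_{R'}M'$. You merely spell out the base-change of the complexes factor by factor, which the paper leaves implicit.
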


\begin{proof}
Set $I=(\aa) = (a_1, \ldots , a_s)R$ and $\varphi (\aa) = \varphi (a_1) ,\ldots, \varphi (a_s)$. 

Then we have from the assumption the equality 
$$
\varphi (S_{a_i, J}) = S_{\varphi (a_i), JR'}, 
$$
for any multiplicative closed subset $S$ in $R'$ and for all $i$ with $1 \leq i \leq s$. 
Therefore,  
$\HIJ ^{i} (M') \cong 
H^{i} (C^{\bullet }_{\aa, J} \otimes _{R} M') \cong 
H^{i}(C^{\bullet }_{\varphi(\aa), JR'} \otimes _{R'} M') \cong 
H^{i}_{IR', JR'} (M')$.
\end{proof}

Here we should remark that the hypothesis $\varphi(J)=JR'$
in the theorem cannot be deleted.  Indeed, let $k$ be a field, 
$R=k[x,y]$ and $R'=k[x,y,z]/(xz-yz^2)$.
Set $I=(x)R'$, $J=(y)R'$.
For a natural ring homomorphism $\varphi$ from $R$ to $R'$,
we have $\varphi(J) \subsetneq  JR'$ and $\Gamma _{I,J}(R')\neq \Gamma_{IR',JR'}(R')$.

If  $\varphi : R \to R'$ is a surjective ring homomorphism, then it satisfies the condition  $\varphi (J) = JR'$ of the theorem. 
However, note that there is a non-surjective ring homomorphism that satisfies the condition. 
For example, let  $R = k [x]$ be a polynomial ring over a field  $k$  and let   $R' = k[x, y]/(xy)$. 
We define a $k$-algebra map  $\varphi : R \to R'$  by  $\varphi (x) = x$. 
Then we have  $\varphi  ( xR) = xR'$.

\begin{rem}\label{FBT}
Let $\varphi : R \to R^{\prime} $ be a flat homomorphism of rings, and let  $M$ be an $R$-module.
Then it induces a natural mapping 
$\HIJ ^{i}(M) \otimes _{R} R^{\prime} \to  H^{i}_{IR^{\prime}, JR^{\prime}} (M\otimes _{R} R^{\prime})$  for any  $i \geq 0$.

In fact, since $\varphi (S_{a_i, J}) \subseteq S_{\varphi (a_i), JR'}$, 
we have a chain homomorphism 
$(C^{\bullet }_{\aa, J} \otimes _{R} M) \otimes _R R' \to C^{\bullet }_{\varphi(\aa), JR'}\otimes _{R'} (M \otimes _R R')$, 
which induces the mapping of cohomologies.

We should note that this induced mapping may not be an isomorphism.

In fact, one can easily construct an example of a localization map $R \to S^{-1}R$  such that  $S^{-1} \Gamma _{I, J} (R) \to \Gamma _{S^{-1}I, S^{-1}J}(S^{-1}R)$  is not surjective.

For a further nontrivial example, let  $\varphi : R = k[x, y]_{(x, y)}  \to \widehat{R} = k[[x, y]]$  be the completion map, and let  $I =xR$  and  $J=yR$.  Furthermore, let  $S =\{ x^n + ya\ | \ a \in R\}$  and  $\widehat{S} = \{ x^n + yb \ | \ b \in \widehat{R}\}$  be multiplicatively closed subsets in  $R$  and  $\widehat{R}$  respectively. 
Then we obtain through the computation using Theorem \ref{L=C} the following equalities.  
$$
\HIJ ^1(R) \otimes _R \widehat{R} = S^{-1}\widehat{R}/\widehat{R}, \qquad 
H_{I\widehat{R},J\widehat{R}} ^1 (\widehat{R}) = \widehat{S}^{-1}\widehat{R}/\widehat{R}. 
$$
It is easy to see that the natural mapping  $S^{-1}\widehat{R}/\widehat{R} \to  \widehat{S}^{-1}\widehat{R}/\widehat{R}$  is injective, but not surjective.

\end{rem}

\section{Relations Between $\HI^{i}$ and $\HIJ^{i}$}

In this section, we study the relations between the local cohomology functors $\HI^{i}$ and $\HIJ^{i}$. 
We need Theorem \ref{dir-lim} below in the proof of one of the vanishing theorems of local cohomologies. 
(See Theorem \ref{non-local van} $(\rm i)$  below.)
First  we introduce a necessary notation.

\begin{df}
Let $\WTIJ $ denote the set of ideals $\a $ of $R$ such that $I^n \subseteq \a +J$ for some integer $n$. 
We define a partial order on $\WTIJ$ by letting $\a \leq \b$ if $\a \supseteq \b$ for $\a$, $\b\in \WTIJ$.
If $\a \leq \b$, we have $\GA (M) \subseteq \GB (M)$. 
The order relation on  $\WTIJ$  and the inclusion maps make $\{ \GA (M) \}_{\a \in \WTIJ}$ into a direct system of $R$-modules. 
\end{df}

\begin{thm}\label{dir-lim}
Let $M$ be an $R$-module. 
Then there is a natural isomorphism 
$$ 
\HIJ ^{i} (M) \cong \varinjlim_{\a \in \WTIJ} \HA ^{i} (M)
$$ 
for any integer $i$.
\end{thm}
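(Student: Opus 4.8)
The strategy is to reduce everything to a statement about torsion functors and then invoke the fact that right derived functors commute with filtered colimits when the colimit is exact — or, more concretely, to use the Čech-complex description from Theorem \ref{L=C}. Let me sketch the torsion-functor route first. The key preliminary observation is that
\[
\GIJ(M) = \varinjlim_{\a \in \WTIJ} \GA(M)
\]
as submodules of $M$. Indeed, an element $x$ lies in $\GIJ(M)$ iff $I^n \subseteq \Ann(x) + J$ for some $n$, which by definition means $\Ann(x) \in \WTIJ$, and then $x \in \G_{\Ann(x)}(M)$; conversely, if $x \in \GA(M)$ with $\a \in \WTIJ$, then $I^n \subseteq \a + J \subseteq \Ann(x) + J$ for $n \gg 1$, so $x \in \GIJ(M)$. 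This gives the case $i = 0$ and identifies the functor $\GIJ$ with the filtered colimit of the functors $\GA$ over the directed poset $\WTIJ$.

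**Main argument.** For the higher cohomology, I would fix an injective resolution $E^{\bullet}$ of $M$. For each $\a \in \WTIJ$ one has $\HA^i(M) = H^i(\G_\a(E^{\bullet}))$, and the transition maps $\G_\a(E^{\bullet}) \to \G_\b(E^{\bullet})$ (for $\a \le \b$, i.e.\ $\a \supseteq \b$) assemble into a filtered direct system of complexes. Since filtered colimits are exact and commute with cohomology,
\[
\varinjlim_{\a} \HA^i(M) = \varinjlim_{\a} H^i(\G_\a(E^{\bullet})) = H^i\bigl(\varinjlim_{\a} \G_\a(E^{\bullet})\bigr).
\]
By the $i=0$ observation applied termwise, $\varinjlim_{\a} \G_\a(E^{\bullet}) = \GIJ(E^{\bullet})$, whose cohomology is $\HIJ^i(M)$ by definition. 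Chasing through, the resulting isomorphism is the natural one induced by the canonical maps $\HA^i(M) \to \HIJ^i(M)$ coming from the inclusions $\G_\a \hookrightarrow \GIJ$ (these are defined because $\HA^\bullet$ and $\HIJ^\bullet$ are derived functors of comparable left-exact functors). Naturality in $M$ then follows from naturality of the construction of injective resolutions up to homotopy.

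**The obstacle.** The one point that genuinely needs care is the interchange of the colimit with cohomology — i.e., justifying that $\varinjlim_\a$ is an exact functor on the relevant diagram category, which is automatic because $\WTIJ$ is a directed poset and colimits over directed posets in $R$-$\Mod$ are exact. A second, subtler point: one must check that each $\G_\a(E^j)$ is $\GIJ$-acyclic, or rather simply that the colimit complex $\GIJ(E^{\bullet})$ really does compute $\HIJ^i(M)$ — but this is immediate since $\GIJ(E^{\bullet})$ is by definition what computes $\HIJ^i$. Alternatively, and perhaps more cleanly, one can bypass injective resolutions entirely: by Theorem \ref{L=C}, $\HIJ^i(M) = H^i(C^{\bullet}_{\aa,J} \otimes_R M)$ for $\aa$ generating $I$, and for $\a = (\bb)$ one has $\HA^i(M) = H^i(C^{\bullet}_{\bb} \otimes_R M)$; one would then show directly that $C^{\bullet}_{\aa,J} \otimes_R M = \varinjlim_\a (C^{\bullet}_{\aa,J/\a\text{-variant}})$ — but the colimit indexing here is awkward, so I would favour the injective-resolution argument above. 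Either way, the heart of the matter is the elementary identity $\GIJ(M) = \varinjlim_{\a \in \WTIJ} \G_\a(M)$; once that is in hand, the theorem is a formal consequence of exactness of filtered colimits.
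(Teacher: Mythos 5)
Your proof is correct, and its heart --- the identity $\GIJ(M)=\varinjlim_{\a\in\WTIJ}\GA(M)$ --- is exactly the first step of the paper's proof; both arguments then rest on exactness of filtered colimits (note $\WTIJ$ is directed, since $\a\b\in\WTIJ$ whenever $\a,\b\in\WTIJ$, a point both you and the paper leave implicit). Where you diverge is in how the degree-zero identity is promoted to all degrees: you fix an injective resolution $E^{\bullet}$ of $M$, apply the identity termwise to get $\varinjlim_{\a}\G_{\a}(E^{\bullet})=\GIJ(E^{\bullet})$, and commute the filtered colimit with cohomology of the complex. The paper never fixes a resolution; it instead verifies that the family $\{\varinjlim_{\a}\HA^{i}\}_{i\geq 0}$ is a connected (cohomological) sequence of functors --- the long exact sequences in $\HA^{i}$ survive the colimit because direct limits are exact --- which vanishes on injective modules in positive degrees and agrees with $\GIJ$ in degree zero, and is therefore a system of right derived functors of $\GIJ$. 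The two mechanisms are interchangeable: yours is more concrete and exhibits the comparison maps at the level of complexes, while the paper's $\delta$-functor argument packages the identification with $\HIJ^{i}$ and its naturality automatically, at the cost of invoking the abstract characterization of derived functors. Your side remarks (the acyclicity worry, the \v{C}ech alternative) are correctly dismissed, and no gap remains.
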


\begin{proof}
First of all, we  show that $\GIJ (M)=\bigcup _{\a\in\WTIJ} \GA (M)$.

To do this, suppose  $x \in \GIJ (M)$. 
Then there is an integer $n\geq0$ with $I^{n}\subseteq \Ann (x)+J$.
Setting  $\a=\Ann (x)$,  we have  $\a\in \tilde{W}(I, J)$, and $x \in \GA (M)$.
Conversely, let $x \in \bigcup _{\a\in \tilde{W} (I, J)} \GA (M)$. 
Then there is an ideal  $\a \in \tilde{W} (I, J) $  with  $x\in \GA (M)$. 
Thus  $I^{m} \subseteq \a +J$ and $\a ^{n}x=0$  for integers $m,~n \geq 0$.
Then, since $I^{mn}\subseteq  (\a +J)^n \subseteq \a ^{n}+J$, we have $I^{mn}x\subseteq Jx$, hence  $ x \in \GIJ (M)$.

Let $0 \to L \to M \to N \to 0$ be an exact sequence of $R$-modules. 
Then it implies a long exact sequence
\[ \begin{CD}
0 @>>> H^{0}_{\a} (L) @>>> H^{0}_{\a} (M) @>>> H^{0}_{\a} (N) \\
  @>>> H^{1}_{\a} (L) @>>> H^{1}_{\a} (M) @>>> \cdots 
\end{CD}\]
for each $\a \in \WTIJ$.
Since taking the direct limit is an exact functor, we obtain the long exact sequence
\[ \begin{CD}
0 @>>> \varinjlim_{\a \in \WTIJ} H^{0}_{\a} (L) 
  @>>> \varinjlim_{\a \in \WTIJ} H^{0}_{\a} (M) 
  @>>> \varinjlim_{\a \in \WTIJ} H^{0}_{\a} (N) \\
  @>>> \varinjlim_{\a \in \WTIJ} H^{1}_{\a} (L) 
  @>>> \varinjlim_{\a \in \WTIJ} H^{1}_{\a} (M) 
  @>>> \cdots .
\end{CD}\]
On the other hand, for any injective $R$-module $E$ and any positive integer $i$, we have $H^{i}_{\a}(E)=0$ for each $\a \in \WTIJ$. 
Thus we have $\varinjlim_{\a \in \WTIJ} H^{i}_{\a} (E)=0$. 

These arguments imply that  $\{ \varinjlim_{\a \in \WTIJ} H^i _{\a} \ | \ i = 0, 1, 2, \ldots \}$  is a system of right derived functors of  $\GIJ$, and the proof is completed.
\end{proof}

Next we shall show that in a local ring $R$ with maximal
ideal $\m$ the $I$-torsion functor $\GI$ has a description as an inverse limit of $(\m,J)$-torsion functors $\GmJ$.
The following lemma is a key for this fact.

\begin{lem}\label{inv-lem}
Let $R$ be a local ring with maximal ideal $\m$. 
Then 
\[ V(J)=\bigcap _{I\in \WTmJ} W(\m, I)=\bigcap _{\p \in \WmJ} W(\m, \p). \]
\end{lem}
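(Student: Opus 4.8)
The plan is to prove the two set equalities
\[
V(J)=\bigcap_{I\in\WTmJ} W(\m,I)\quad\text{and}\quad V(J)=\bigcap_{\p\in\WmJ} W(\m,\p)
\]
by mutual inclusions, handling the left-hand equality first and then showing the two intersections on the right coincide. For the inclusion $V(J)\subseteq\bigcap_{I\in\WTmJ}W(\m,I)$, I would take $\q\in V(J)$, so $J\subseteq\q$, and an arbitrary $I\in\WTmJ$, meaning $\m^n\subseteq I+J$ for some $n$. Then $\m^n\subseteq I+J\subseteq I+\q$, so $I+\q$ contains a power of $\m$, hence is $\m$-primary, which is exactly the statement $\q\in W(\m,I)$; thus $\q$ lies in the intersection. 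The same computation with $I$ replaced by a prime $\p\in\WmJ$ gives $V(J)\subseteq\bigcap_{\p\in\WmJ}W(\m,\p)$ at no extra cost.

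For the reverse inclusions, the key observation is that $J$ itself belongs to $\WTmJ$: trivially $\m^1\subseteq J+J$... more carefully, we need $\m^n\subseteq J+J=J$ for some $n$, which holds only if $J$ is $\m$-primary, so this cheap trick does not work in general. Instead I would argue contrapositively: suppose $\q\notin V(J)$, i.e. $J\not\subseteq\q$. I want to produce an ideal $I\in\WTmJ$ (and in fact a prime $\p\in\WmJ$) with $\q\notin W(\m,I)$, i.e. with $I+\q$ not $\m$-primary. The natural candidate is to take a prime $\p$ with $J\subseteq\p$, $\p\not\subseteq\q$, and $\p+\q\neq\m$ — for instance a minimal prime of $J$ not contained in $\q$, suitably chosen. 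One must check such $\p$ satisfies $\p\in\WmJ$ (i.e. $\m^n\subseteq\p+J$ for some $n$, which follows since $\sqrt{\p+J}=\sqrt\p\neq\m$... wait, that gives the opposite). Let me reconsider: $\p\in\WmJ$ requires $\p+J$ to be $\m$-primary; if $\p\supseteq J$ then $\p+J=\p$, which is $\m$-primary only if $\p=\m$. So the right choice of witness is more delicate and this is the main obstacle.

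The honest approach to the reverse inclusion: given $\q\notin V(J)$, I would produce $\p\in\WmJ$ with $\q\notin W(\m,\p)$ by choosing $\p$ to be a prime containing $J+\m^k$ for suitable considerations — more precisely, since $J\not\subseteq\q$ pick $a\in J\setminus\q$, and look for a prime $\p\in\WmJ$ avoiding... Actually the clean statement is: $\q\in W(\m,\p)$ means $\p+\q$ is $\m$-primary. So I need a single $\p$ with $\p+J$ $\m$-primary but $\p+\q$ \emph{not} $\m$-primary, i.e. $\p+\q\subseteq\m'$ for some prime $\m'\neq\m$, i.e. $\p\subseteq\m'$ and $\q\subseteq\m'$. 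Thus it suffices to find a non-maximal prime $\m'\supseteq\q$ and a prime $\p\subseteq\m'$ with $\p+J$ $\m$-primary; taking $\p=\m'$ won't do ($\m'+J\subseteq$ things not $\m$-primary if $J\subseteq\m'$, but $J\not\subseteq\m'$ is possible). The cleanest fix uses part (7) of the earlier Proposition on $\WIJ$ together with the identity $W(\m,\p)\cap\{\text{primes}\}$; I expect the intended proof invokes $V(J)=\bigcap_J$-type results already established, so I would first reduce the reverse inclusion to: for $\q\not\supseteq J$ there is $I\in\WTmJ$ with $I+\q$ not $\m$-primary, then take $I$ generated by an element of $J$ outside $\q$ together with enough of $J$ to be in $\WTmJ$ — the hard part will be arranging membership in $\WTmJ$ while keeping $I+\q$ away from $\m$-primary, and I anticipate this forces a careful choice of $I$ as (a power of $\m$ truncation of) $J$ localized appropriately, which is where the real content of the lemma sits. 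Finally, $\bigcap_{I\in\WTmJ}W(\m,I)\subseteq\bigcap_{\p\in\WmJ}W(\m,\p)$ is immediate since every prime in $\WmJ$ lies in $\WTmJ$, so the three sets are forced equal once both displayed equalities are established.
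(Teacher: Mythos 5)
Your first inclusion $V(J)\subseteq\bigcap_{I\in\WTmJ}W(\m,I)$ is correct and is exactly the paper's argument, and the containment $\bigcap_{I\in\WTmJ}W(\m,I)\subseteq\bigcap_{\p\in\WmJ}W(\m,\p)$ is indeed immediate from $\WmJ\subseteq\WTmJ$. But the essential inclusion $\bigcap_{\p\in\WmJ}W(\m,\p)\subseteq V(J)$ is never proved: you correctly reduce it to producing, for a prime $\q$ with $J\not\subseteq\q$, a prime $\p\in\WmJ$ such that $\p+\q$ is not $\m$-primary, and then you stop at the point where you admit ``this is where the real content of the lemma sits.'' That missing construction is the whole lemma; as written the proposal has a genuine gap. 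Moreover, your dismissal of the candidate $\p=\m'$ (a non-maximal prime containing $\q$) was premature: you only considered primes containing $J$, for which $\p+J=\p$ fails to be $\m$-primary, whereas the correct witness does \emph{not} contain $J$ --- one only needs $J+\p$ to be $\m$-primary, which can be forced by making $\p$ contain suitable parameter elements.

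Concretely, the paper argues by contradiction as follows. Take $\q$ in the intersection with $\q\notin V(J)$, pick $x\in J\setminus\q$, and set $r=\dim\, R/\q$. Since $x$ is $R/\q$-regular, $\dim\, R/(\q+(x))=r-1$, so one can choose $y_1,\dots,y_{r-1}\in\m$ whose images form a system of parameters of $R/(\q+(x))$. Then $\q+(x,y_1,\dots,y_{r-1})$ is $\m$-primary while $\q+(y_1,\dots,y_{r-1})$ is not, so there is a prime $\p$ with $\q+(y_1,\dots,y_{r-1})\subseteq\p\subsetneq\m$. Since $\q+(x,y_1,\dots,y_{r-1})\subseteq(x)+\p\subseteq J+\p$, the ideal $J+\p$ is $\m$-primary, i.e.\ $\p\in\WmJ$; but then $\q\in W(\m,\p)$ would force $\p+\q=\p$ to be $\m$-primary, contradicting $\p\subsetneq\m$. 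This dimension-theoretic choice of $\p$ (containing $\q$ and the parameters $y_i$, but not equal to $\m$) is precisely the idea your proposal is missing.
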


\begin{proof}
If $\p \in V(J)$ and $I\in \WTmJ$, then 
$\m ^n \subseteq  I + J \subseteq  I + \p$  for an integer  $n > 0$, hence we have $\p \in W(\m, I)$. 
Thus  $V(J) \subseteq \bigcap _{I\in \WTmJ} W(\m, I)$. 
Since $\WmJ \subseteq \WTmJ$, we have $\bigcap _{I\in \WTmJ} W(\m, I) \subseteq \bigcap _{\p \in \WmJ} W(\m, \p)$.

We only have to show the remaining inclusion  $\bigcap _{\p \in \WmJ} W(\m, \p) \subseteq V(J)$. 
Suppose that $\bigcap _{\p \in \WmJ} W(\m, \p) \nsubseteq V(J)$. 
Then there is a prime ideal $\q \in \bigcap _{\p \in \WmJ} W(\m, \p)$ with 
 $\q \not \in V(J)$. 
Take an element $x \in J \setminus \q$ and set  $r = \dim\, R/\q$. 
Since $x$ is $R/\q$-regular element, $\dim\, R/(\q +(x))=r-1$. 
Thus there exist $y_{1}, y_{2}, \ldots ,y_{r-1} \in \m$ such that 
$\bar{y}_{1}, \bar{y}_{2} ,\ldots \bar{y}_{r-1} \in \m/(\q+(x))$ is a system of parameters of $R/(\q+(x))$. 
Then $\q +(x, y_{1}, y_{2}, \ldots, y_{r-1})$ is an $\m$-primary ideal, 
and $\q +(y_{1},y_{2}, \ldots, y_{r-1})$ is not. 
Thus we can find a prime ideal $\p$ with $\q +(y_{1},y_{2}, \ldots, y_{r-1}) \subseteq \p \subsetneq \m$. 
On the other hand,  $J+\p$ is an  $\m$-primary ideal, since  $\q +(x, y_{1},y_{2}, \ldots, y_{r-1}) \subseteq (x)+\p \subseteq J+\p$. 
Therefore $\p \in \WmJ$, and hence we must have   $\q \in W(\m, \p)$. 
Thus we conclude that $\p=\p+\q$ is an $\m$-primary ideal, but this is a contradiction. 
\end{proof}

Recall that $\WTIJ $ is a partially ordered set, 
in which the order relation  $\a \leq \b$ for $\a, \b \in \WTIJ$  is defined by  $\b \subseteq \a$. 
Note that the relation  $\a \leq \b$ naturally implies the inclusion mapping  $\G_{I, \a} (M) \supseteq  \G_{I, \b } (M)$, which makes $\{ \G_{I, \a} (M) \}_{\a \in \WTIJ}$ an inverse system of $R$-modules. 
We are now ready to prove the following proposition.

\begin{prop}\label{inv-lim}
Let $R$ be a local ring with maximal ideal $\m$, and $M$ be an $R$-module. 
Then we have the equality 
$$
\GI (M)= \varprojlim _{J \in \WTmI} \GmJ (M).
$$
\end{prop}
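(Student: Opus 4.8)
The plan is to establish the equality $\GI(M) = \varprojlim_{J \in \WTmI} \GmJ(M)$ by showing that $\GI(M)$, viewed as a submodule of $M$, coincides with the intersection $\bigcap_{J \in \WTmI} \GmJ(M)$ taken inside $M$. Indeed, since all the $\GmJ(M)$ are submodules of $M$ and the transition maps in the inverse system $\{\GmJ(M)\}_{J \in \WTmI}$ are the inclusion maps $\G_{\m, \b}(M) \hookrightarrow \G_{\m, \a}(M)$ for $\a \leq \b$ (i.e. $\b \subseteq \a$), the inverse limit of this system is canonically identified with $\bigcap_{J \in \WTmI} \GmJ(M)$. So the substance of the proof is the set-theoretic identity $\GI(M) = \bigcap_{J \in \WTmI} \GmJ(M)$.

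First I would prove the inclusion $\GI(M) \subseteq \GmJ(M)$ for every $J \in \WTmI$. Take $x \in \GI(M)$; then $\Supp(Rx) \subseteq V(I)$ by the standard description of the ordinary torsion submodule (equivalently Corollary \ref{KTY1-2} applied with $J = 0$). For $J \in \WTmI$ we have $\m^n \subseteq I + J$ for some $n$, so for any $\p \in \Supp(Rx)$ we get $\p \supseteq I$, whence $\m^n \subseteq I + J \subseteq \p + J$, i.e. $\p \in W(\m, J)$. Thus $\Supp(Rx) \subseteq W(\m, J)$, which by Corollary \ref{KTY1-2}(1) means $x \in \GmJ(M)$. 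This gives $\GI(M) \subseteq \bigcap_{J \in \WTmI} \GmJ(M)$.

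For the reverse inclusion, let $x \in \bigcap_{J \in \WTmI} \GmJ(M)$. By Corollary \ref{KTY1-2}(1) this says $\Supp(Rx) \subseteq W(\m, J)$ for every $J \in \WTmI$, hence $\Supp(Rx) \subseteq \bigcap_{J \in \WTmI} W(\m, J)$. Now apply Lemma \ref{inv-lem}, which identifies $\bigcap_{I \in \WTmJ} W(\m, I)$ with $V(J)$; used with the roles of the ideals set appropriately (the intersection $\bigcap_{J \in \WTmI} W(\m, J)$ equals $V(I)$), we conclude $\Supp(Rx) \subseteq V(I)$, i.e. $x \in \GI(M)$. Combining the two inclusions yields $\GI(M) = \bigcap_{J \in \WTmI} \GmJ(M) = \varprojlim_{J \in \WTmI} \GmJ(M)$, as desired.

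The main obstacle is the reverse inclusion, and it is entirely concentrated in Lemma \ref{inv-lem}: one needs to know that no prime outside $V(I)$ can lie in $W(\m, J)$ for all $J \in \WTmI$, which is exactly the nontrivial direction of that lemma (the construction of a system of parameters and the resulting primary-ideal contradiction). Once Lemma \ref{inv-lem} is invoked, the rest is a routine translation between torsion submodules and supports via Corollary \ref{KTY1-2}, together with the observation that the inverse limit of an inverse system of submodules with inclusion transition maps is their intersection. I should be careful that the locality of $R$ is used only through Lemma \ref{inv-lem}; the two support computations themselves are valid over any noetherian ring.
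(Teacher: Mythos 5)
Your proposal is correct and follows essentially the same route as the paper: identify the inverse limit with the intersection $\bigcap_{J \in \WTmI} \GmJ(M)$, verify the easy inclusion $\GI(M) \subseteq \GmJ(M)$ directly, and derive the reverse inclusion from Lemma \ref{inv-lem} (with the roles of the ideals swapped, giving $\bigcap_{J \in \WTmI} W(\m,J) = V(I)$). The only cosmetic difference is that you phrase everything via $\Supp(Rx)$ and Corollary \ref{KTY1-2}, whereas the paper works with $\Ann(x)$ and $\tilde{W}(\m,\Ann(x))$ — but since $\Supp(Rx)=V(\Ann(x))$ these are the same argument.
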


\begin{proof}
We show  $\GI (M)= \bigcap_{J \in \WTmI} \GmJ (M)$. 
For this, let $x\in \GI(M)$ and $J \in \WTmI$. 
Then there are integers $m$, $n \geq0$ with $I^{m}x=0$, $\m^{n}\subseteq J+I$. 
Thus  $\m^{mn}x \subseteq Jx$, and hence  $x \in \GmJ(M)$. 
It follows that $x \in \bigcap_{J \in \WTmI} \GmJ (M)$.

Conversely, let $x \in \bigcap_{J \in \WTmI} \GmJ (M)$. 
For $J \in \WTmI$, there exists an integer $n \geq 0$ such that $\m^{n} \subseteq \Ann (x)+J$,  hence  $J \in \tilde{W} (\m, \Ann (x))$. 
Thus we have  $\WTmI \subseteq \tilde{W} (\m, \Ann (x))$. 
It then  follows from Lemma \ref{inv-lem}  that 
\[ V(\Ann (x)) = \bigcap_{J\in \tilde{W} (\m, \Ann (x))} W(\m, J) \subseteq \bigcap_{J \in \WTmI}W(\m, J)=V(I). \]
Therefore we have $I \subseteq \sqrt{\Ann(x)}$, hence $x \in \GI(M)$.
\end{proof}

\section{Vanishing and nonvanishing theorems}

In this section we argue about the vanishing and nonvanishing of local cohomology modules with respect to  $(I, J)$. 
For the remainder of this section, we adopt the convention that $\inf~ \emptyset =\infty$  for the empty subset of  $\N$, and  $\depth~ 0=\infty$, $\dim\, 0=-1$  for the trivial $R$-module.

\begin{thm}\label{KTY1-6}
For any finitely generated $R$-module $M$ we have the
equality 
$$ 
\inf \{~ i \mid \HIJ ^i (M) \neq 0 ~ \} = \inf \{~ \depth ~ M_{\p} \mid \p \in \WIJ ~ \}.
$$

\end{thm}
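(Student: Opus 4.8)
The plan is to prove the two inequalities separately, reducing in each case to known facts about ordinary local cohomology via the \v{C}ech description (Theorem~\ref{L=C}) and the direct-limit formula (Theorem~\ref{dir-lim}). Write $t = \inf\{\,i \mid \HIJ^i(M) \neq 0\,\}$ and $d = \inf\{\,\depth M_\p \mid \p \in \WIJ\,\}$; if $\WIJ \cap \Supp(M) = \emptyset$ then $M$ is $(I,J)$-torsion-free while simultaneously $d = \infty$ by convention, and one checks both sides are $\infty$ using Proposition~\ref{KTY1-3}, so we may assume $\WIJ \cap \Supp(M) \neq \emptyset$.

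First I would show $t \geq d$, equivalently that $\HIJ^i(M) = 0$ for all $i < d$. Using Theorem~\ref{dir-lim} we have $\HIJ^i(M) \cong \varinjlim_{\a \in \WTIJ} \HA^i(M)$, so it suffices to show $\HA^i(M) = 0$ for every $\a \in \WTIJ$ and every $i < d$. By Grothendieck's (non)vanishing theorem for ordinary local cohomology, $\inf\{\,i \mid \HA^i(M) \neq 0\,\} = \inf\{\,\depth M_\p \mid \p \in V(\a)\,\} = \grade(\a, M)$. So I need: for $\a \in \WTIJ$, every $\p \in V(\a) \cap \Supp(M)$ lies in $\WIJ$ — this is immediate since $I^n \subseteq \a + J \subseteq \p + J$. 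Hence $\inf\{\,\depth M_\p \mid \p \in V(\a)\,\} \geq d$, giving $\HA^i(M) = 0$ for $i < d$, and passing to the limit yields $\HIJ^i(M) = 0$ for $i < d$, i.e.\ $t \geq d$.

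The reverse inequality $t \leq d$ is the harder direction. Choose $\p \in \WIJ \cap \Supp(M)$ with $\depth M_\p = d$ (such a $\p$ exists when $d < \infty$; if $d = \infty$ there is nothing to prove). The idea is to produce a nonvanishing $\HIJ^i(M)$ with $i \leq d$. Here I would localize: since $\HIJ$ does not in general commute with localization (Remark~\ref{FBT}), this requires care, but one can still argue that $(\HIJ^i(M))_\p$ receives a map from, or relates to, the ordinary local cohomology $H^i_{\p R_\p}(M_\p)$ — indeed for a prime $\p \in \WIJ$ one expects $S_{a,J}$ to meet $\p$ for every $a \in I$ (Proposition~\ref{basic Cech}(3)), and after localizing at $\p$ one wants to compare the localized generalized \v{C}ech complex with the ordinary one computing $H^i_{\p R_\p}$. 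Concretely: $\p \in \WIJ$ means $I^n \subseteq J + \p$, so in $R_\p$ we get $IR_\p \subseteq \sqrt{JR_\p + \p R_\p}$; combined with $\WIJ = W(\sqrt I, J)$ (Proposition, part 5) and the fact that $\GIJ$ only sees $I$ up to radical, one reduces to comparing $\G_{I,J}$ with $\G_\m$-type functors on $R_\p$. The cleanest route is probably: using Proposition~\ref{KTY1-1} and the \v{C}ech complex, show $H^d_{\p R_\p}(M_\p) \neq 0$ (Grothendieck nonvanishing) is "captured" by $\HIJ^d(M)_\p$ or a suitable localization, so $\HIJ^d(M) \neq 0$, hence $t \leq d$.

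I expect the main obstacle to be exactly this second inequality, specifically controlling how $\HIJ^i$ behaves under localization at $\p \in \WIJ$. The issue is that the surjection or isomorphism $\HIJ^i(M)_\p \to H^i_{\text{(something)}}(M_\p)$ need not hold on the nose; one must identify the right target. I would try to handle it by passing to $R_\p$, observing that $\p R_\p \in W(IR_\p, JR_\p)$, and showing directly via the \v{C}ech complex that $H^d_{IR_\p, JR_\p}(M_\p)$ contains (or surjects onto) a nonzero module built from $H^d_{\p R_\p}(M_\p)$, using that the multiplicatively closed sets $S_{a_i, JR_\p}$ all meet $\p R_\p$; failing a clean isomorphism, an analysis of the spectral sequence from Theorem~\ref{L=C}'s proof, or an induction on $\dim M_\p$ reducing to the depth-zero case where $\Ass(M_\p) \cap \WIJ \neq \emptyset$ forces $\GIJ(M)_\p \neq 0$ (Proposition~\ref{KTY1-3}), should close the gap.
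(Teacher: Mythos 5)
Your first inequality is fine, and it is a genuinely different route from the paper's: you deduce $\HIJ^i(M)=0$ for $i<d$ from Theorem \ref{dir-lim} together with the classical formula $\inf\{i\mid \HA^i(M)\neq 0\}=\grade(\a,M)=\inf\{\depth M_\p\mid \p\in V(\a)\}$, plus the observation $V(\a)\subseteq\WIJ$ for $\a\in\WTIJ$; this is legitimate and not circular, since Theorem \ref{dir-lim} is proved independently. (The paper instead gets this direction from a minimal injective resolution $E^\bullet(M)$: by Proposition \ref{KTY1-4}, $\GIJ(E^i(M))=\bigoplus_{\p\in\WIJ}E(R/\p)^{\mu_i(\p,M)}$, and $\mu_i(\p,M)=0$ for $i<\depth M_\p$, so $\GIJ(E^i(M))=0$ for $i<d$.)

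The genuine gap is the reverse inequality, the nonvanishing of $\HIJ^d(M)$, which you only sketch and whose difficulties you yourself flag. Your proposed localization at a prime $\p\in\WIJ$ with $\depth M_\p=d$ does not go through as stated: $\HIJ^i$ does not commute with localization (Remark \ref{FBT}), and even at the level of the \v{C}ech complex, $(R_{a,J})_\p=S_{a,J}^{-1}R_\p$ is not $S_{a,JR_\p}^{-1}R_\p$, so there is no identification of $\HIJ^d(M)_\p$ with $H^d_{IR_\p,JR_\p}(M_\p)$ to exploit; moreover, proving nonvanishing of the latter is essentially the theorem again in the local case, so nothing has been reduced. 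The suggested induction on $\dim M_\p$ reducing to depth zero also has no worked-out induction step (an $M_\p$-regular element of $\p$ need not be $M$-regular, and modding out changes $\WIJ$ and the minimizing prime). The paper closes this direction without any localization: with $E^\bullet(M)$ a \emph{minimal} injective resolution, the complex $\GIJ(E^\bullet(M))$ begins in degree $d$ and $\GIJ(E^d(M))\neq 0$ because $\mu_d(\p,M)\neq 0$ for the minimizing $\p\in\WIJ$; then $\HIJ^d(M)=\GIJ(E^d(M))\cap\Ker d^{d}$, and since minimality makes $\Ker d^{d}=\Im d^{d-1}$ an essential submodule of $E^d(M)$, this intersection is nonzero. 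Some argument of this kind (Bass numbers plus essentiality), or another complete substitute, is what your proposal is missing.
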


\begin{proof}
We set $n =  \inf \{~ \depth ~ M_{\p} \mid \p \in \WIJ ~ \}$, and let 
$E^{\bullet} (M)$  be a minimal injective resolution of  $M$.

If $\p \in \WIJ$, then $n \leq \depth ~ M_{\p} =\inf \{~ i \mid \mu _{i} (\p , M) \neq 0 ~ \} $. 
Hence we have the equality
\begin{equation}\label{kieru}
\GIJ ( E^{i} (M) ) = \bigoplus _{\p \in \WIJ} E(R/\p) ^{\mu _{i} (\p , M)} =0, 
\end{equation}
for any integer $i<n$.
(Also note that  $\GIJ (E^n(M)) \not= 0$.)   
It follows that  $\HIJ ^{i} (M)=0$ if  $i<n$.

It suffices to show that $\HIJ ^{n} (M) \neq 0$.
We see from the equality  (\ref{kieru})  that the complex 
$\GIJ (E^{\bullet} (M))$ starts from its $n$-th term. 
Thus we have a commutative diagram
\[ \minCDarrowwidth2.5pc
\begin{CD}
 0 @>>> \HIJ ^{n} (M) @>>> \GIJ (E^{n} (M)) @>>> \GIJ (E^{n+1} (M))\\
	@.				  @.						@VVV					@VVV \\
	@.	E^{n-1} (M) @>{d^{n-1}}>> E^{n} (M) @>{d^{n}}>> E^{n+1} (M)\\
\end{CD} 
\]
with exact rows. 
Since $\Ker~d^{n}=\Im~d^{n-1} \subseteq E^{n}(M)$ is an essential extension, 
it follows that $\HIJ ^{n} (M)=\GIJ (E^{n}(M)) \cap \Ker~d^{n} \neq 0$.
\end{proof}

As a special case of the theorem, if  $J = 0$ then 
we obtain the well-known equality 
\[\inf \{~i\mid \HI ^i (M) \neq 0 ~\} = \grade ~(I, M) = \inf \{~\depth~M_{\p} \mid \p \in V(I)~\}.\]
for a finitely generated $R$-module $M$.

\begin{cor}\label{KTY1-7}
Let $M$ be a finitely generated module over a local ring  $R$  with maximal ideal $\m$. 
Then the following conditions are equivalent:
\begin{enumerate}
  \item[{(\rm 1)}]\ $M$ is $(I, J)$-torsion $R$-module.
  \item[{(\rm 2)}]\ $\HIJ ^{i} (M)=0$ for all integers $i>0$.
  \end{enumerate}
\end{cor}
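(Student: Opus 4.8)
The direction $(1)\Rightarrow(2)$ is already known: it is exactly Corollary \ref{KTY1-5} $(\rm 1)$, which holds for arbitrary $R$-modules over any ring. So the real content is the converse $(2)\Rightarrow(1)$, and here the local hypothesis and finite generation of $M$ are both essential. The natural approach is to invoke the graded-depth formula of Theorem \ref{KTY1-6}, which is available since $M$ is finitely generated.

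\begin{proof}
The implication $(\rm 1)\Rightarrow(\rm 2)$ is Corollary \ref{KTY1-5} $(\rm 1)$.

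Conversely, assume $\HIJ^{i}(M)=0$ for all $i>0$. If $M=0$ there is nothing to prove, so suppose $M\neq 0$. We claim that $\WIJ\neq\emptyset$: indeed, $\m\in\WIJ$ since $I^{n}\subseteq J+\m=\m$ trivially for every $n$. Hence, setting
$$
n=\inf\{~\depth~M_{\p}\mid \p\in\WIJ~\},
$$
the set on the right is nonempty and $n$ is a well-defined nonnegative integer (it is finite because $\depth M_{\m}<\infty$, as $M_{\m}=M$ is a nonzero finitely generated module over the local ring $R$). By Theorem \ref{KTY1-6} we have $\inf\{~i\mid\HIJ^{i}(M)\neq 0~\}=n$, and since $\HIJ^{i}(M)=0$ for all $i>0$ by hypothesis, the only way this infimum can equal $n$ is if $n=0$ and $\HIJ^{0}(M)=\GIJ(M)\neq 0$. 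In particular $n=0$, which means there is a prime $\p\in\WIJ$ with $\depth M_{\p}=0$.

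We now show $\Ass(M)\subseteq\WIJ$, which by Proposition \ref{KTY1-1} forces $M=\GIJ(M)$, i.e.\ $M$ is $(I,J)$-torsion. Suppose to the contrary that some $\q\in\Ass(M)$ lies outside $\WIJ$. Localizing is delicate because $\HIJ$ need not localize well (see Remark \ref{FBT}), so instead I argue directly. Consider the submodule $N=\GIJ(M)$ of $M$ and the quotient $M/N$. By Corollary \ref{KTY1-5} $(\rm 3)$, $M/N$ is $(I,J)$-torsion-free, so $\GIJ(M/N)=0$. From the exact sequence $0\to N\to M\to M/N\to 0$ and Corollary \ref{KTY1-5} $(\rm 2)$, for $i\geq 1$ we get $\HIJ^{i}(M)\cong\HIJ^{i}(M/N)$; hence $\HIJ^{i}(M/N)=0$ for all $i>0$ as well. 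If $M/N\neq 0$, then applying the first part of the argument to $M/N$ (which is again finitely generated over $R$) yields $\GIJ(M/N)\neq 0$, contradicting $(I,J)$-torsion-freeness. Therefore $M/N=0$, i.e.\ $M=\GIJ(M)$, so $M$ is $(I,J)$-torsion.
\end{proof}

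The main obstacle, and the reason the argument is routed through Theorem \ref{KTY1-6} and the torsion/torsion-free decomposition rather than through localization, is precisely that $\HIJ^{i}$ does not behave well under localization in general (Remark \ref{FBT}): one cannot simply say "$\HIJ^{i}(M)=0$ for $i>0$ implies $H^{i}_{IR_{\p},JR_{\p}}(M_{\p})=0$" and run a local induction on dimension. Replacing $M$ by $M/\GIJ(M)$ and exploiting that this quotient is $(I,J)$-torsion-free while having the same higher local cohomology is the clean way around this; combined with the nonvanishing half of Theorem \ref{KTY1-6} it closes the argument immediately.
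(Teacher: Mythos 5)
Your proof is correct and follows essentially the same route as the paper: the substance of $(2)\Rightarrow(1)$ is exactly the paper's argument, namely passing to $N=M/\GIJ(M)$, noting it is $(I,J)$-torsion-free with $\HIJ^i(N)\cong\HIJ^i(M)=0$ for $i>0$, and using $\m\in\WIJ$ together with Theorem \ref{KTY1-6} and $\depth N<\infty$ to force $N=0$. The only differences are cosmetic redundancies: the preliminary pass showing $\GIJ(M)\neq 0$ and the assumed prime $\q\in\Ass(M)\setminus\WIJ$ are never actually needed, since your concluding argument already yields $M=\GIJ(M)$ unconditionally.
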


\begin{proof}
We have already shown the implication $(\rm 1) \Rightarrow (\rm 2)$  in Corollary \ref{KTY1-5}(1). 

\par
\noindent 
To prove  $(\rm 2) \Rightarrow (\rm 1)$, let us denote $N=M/\GIJ(M)$. 
We only have to  show that $N=0$. 
Suppose $N \neq 0$. 
From Corollary \ref{KTY1-5} (\rm 3) and (\rm 4), 
we have  $\GIJ (N) = 0$ and $\HIJ ^{i} (N) \cong \HIJ^{i} (M) =0 $ if $i>0$. 
On the other hand, since $\m \in \WIJ $, 
the inequality  $\inf \{~ \depth ~ N_{\p} \mid \p \in \WIJ ~ \} \leq \depth N_{\m} = \depth N (< \infty)$  holds. 
Thus $\HIJ ^{i} (N) \neq 0$ for an integer $i \leq \depth~ N$ by Theorem \ref{KTY1-6}. 
This is a contradiction. 
Therefore $N=0$, and the proof is completed.
\end{proof}

\begin{thm}\label{Semi-6}
Let $M$ be a finitely generated module over a local ring  $R$. 
Suppose that $J \neq R$.  
Then $\HIJ ^{i} (M)=0$ for any $i > \dim\, M/JM$.
\end{thm}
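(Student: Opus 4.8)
The plan is to reduce the statement to a known vanishing fact about ordinary local cohomology by exploiting two earlier results: Proposition \ref{replace IJ}(6), which gives $H_{I+J,J}^{i}(M)=\HIJ^{i}(M)$, and Corollary \ref{J-tor}, which computes $\HIJ^{i}$ on $J$-torsion modules as ordinary local cohomology. The point of replacing $I$ by $I+J$ is that then $J\subseteq I+J$, so modulo $J$ the ideal generated by the relevant sequence has the same radical as $(I+J)/J$; but more usefully, it lets us work with the filtration of $M$ by powers of $J$.

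First I would reduce to the case where $M$ is $J$-torsion. Consider the Artin–Rees type filtration $M\supseteq JM\supseteq J^2M\supseteq\cdots$, and more precisely look at $M/J^nM$ for $n\gg0$. Each quotient $J^tM/J^{t+1}M$ is a finitely generated module annihilated by $J$, hence is $J$-torsion, and $\dim\bigl(J^tM/J^{t+1}M\bigr)\le\dim M/JM$. For a $J$-torsion finitely generated module $N$, Corollary \ref{J-tor} gives $\HIJ^{i}(N)\cong\HI^{i}(N)$; since $N$ is annihilated by $J$, we also have $\HI^{i}(N)\cong H^{i}_{(I+J)/J}(N)=H^{i}_{I(R/J)}(N)$ viewed over $R/J$ (using that $\HI^i$ only depends on $\sqrt{I}$ and on the module structure, together with independence of base ring). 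By Grothendieck's vanishing theorem over $R/J$, $H^{i}_{I(R/J)}(N)=0$ for $i>\dim N$, hence for $i>\dim M/JM$. Thus $\HIJ^{i}$ vanishes above $\dim M/JM$ on all the layers $J^tM/J^{t+1}M$.

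Next I would propagate this through the finite filtration $M/J^nM$ by the long exact sequences attached to $0\to J^tM/J^nM\to M/J^nM\to M/J^tM\to 0$ (or simply dévissage along $0\to J^{t+1}M/J^nM\to J^tM/J^nM\to J^tM/J^{t+1}M\to 0$), concluding by induction on the number of layers that $\HIJ^{i}(M/J^nM)=0$ for every $i>\dim M/JM$ and every $n$. Finally, since $\HIJ^{i}$ commutes with inductive limits (Proposition \ref{comm injlim}), I would want to pass to the limit; but $M$ is not the direct limit of the $M/J^nM$. Instead I would use that $\GIJ$, being $(I+J)$-related, "does not see" the $J$-adic separatedness issue: more concretely, since $J\subseteq I+J$ and $M\to\varprojlim M/J^nM=\widehat M^J$ need not be injective, the cleanest route is to observe directly that for the computation via the generalized \v Cech complex of Theorem \ref{L=C}, taking $\aa$ a generating set of $I$, each localization $M_{a_i,J}$ satisfies $M_{a_i,J}=\varinjlim_n (M/J^nM)_{a_i,J}$ is false in general too — so the genuinely careful step is needed here.

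The main obstacle, therefore, is precisely this last passage: relating $\HIJ^{i}(M)$ to the $\HIJ^{i}(M/J^nM)$. I expect the resolution to go through Corollary \ref{J-tor} combined with Proposition \ref{replace IJ}(6) applied more cleverly, namely $\HIJ^{i}(M)=H^{i}_{I+J,J}(M)$ and then using that $\GIJ(M)$ depends only on the behaviour of $M$ "along $V(J)$" after inverting elements of $I$: one shows $\HIJ^{i}(M)\cong\HIJ^{i}\bigl(\varinjlim_n M/J^nM\bigr)$ is wrong, but $\HIJ^{i}(M)\cong H^i_{I(R/J^n)}(M/J^nM)$ stabilizes for $n\gg0$ by the Artin–Rees lemma exactly as in the proof that $\HI^i(M)\cong \varinjlim_n \Ext^i_R(R/I^n,M)$ — here one uses $\Gamma_{I,J}(M)=\varinjlim_{\a\in\WTIJ}\Gamma_\a(M)$ from Theorem \ref{dir-lim}, with the cofinal family of ideals $\a$ of the form $I^n+J^m$, reducing everything to ordinary local cohomology supported on such ideals, each of which is contained in $V(J)$-computations bounded by $\dim M/JM$ via Grothendieck vanishing over $R/J^m$. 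That dévissage over the $J$-adic filtration is the crux; the rest is bookkeeping with long exact sequences and the base-ring-independence/dependence-on-radical formalism already established.
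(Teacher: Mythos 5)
Your reduction to the $J$-torsion layers is fine as far as it goes: for a finitely generated module killed by a power of $J$, Corollary \ref{J-tor} together with Grothendieck's vanishing theorem over $R/J$ does give vanishing of $\HIJ^{i}$ for $i>\dim\, M/JM$, and the d\'evissage through the quotients $M/J^{n}M$ is routine bookkeeping. But the step you yourself flag as the crux --- passing from the $M/J^{n}M$ back to $M$ --- is a genuine gap, and neither of the fixes you sketch works. The suggested stabilization $\HIJ^{i}(M)\cong H^{i}_{I(R/J^{n})}(M/J^{n}M)$ for $n\gg 0$ is unsubstantiated, and it cannot be a purely Artin--Rees phenomenon: in the paper's own non-local example ($R=k[x]$, $I=(x-1)$, $J=(x^{2}-x)$, $M=R$) every layer $J^{t}M/J^{t+1}M$ and every quotient $M/J^{n}M$ has vanishing $\HIJ^{i}$ for all $i>0=\dim\, M/JM$, yet $\HIJ^{1}(M)\neq 0$; so no $J$-adic d\'evissage can succeed unless the locality of $R$ enters exactly at the step you left open, and your outline never uses it there. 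The other suggestion is concretely false: the ideals $I^{n}+J^{m}$ are \emph{not} cofinal in $\WTIJ$ (cofinality would force $J^{m}\subseteq\a$ for every $\a\in\WTIJ$, which already fails for $\a=I$), and if they were, Theorem \ref{dir-lim} would yield $\HIJ^{i}=H^{i}_{I+J}$, which is wrong since $\WIJ\supsetneq V(I+J)$ in general (e.g.\ $(x)\in W((x),(y))\setminus V((x,y))$ in $k[[x,y]]$), so already $\GIJ\neq\G_{I+J}$ on suitable injective hulls.

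For comparison, the paper's proof takes a different route in which locality is used essentially: it inducts on $r=\dim\, M/JM$, handles $r=-1$ by Nakayama, reduces via a prime filtration and Theorem \ref{IDT} to the case where $R$ is a domain and $M=R$, and then, assuming $\HIJ^{\ell}(R)\neq 0$ for some $\ell>r$, shows that any nonzero element $x$ of an associated prime of $\HIJ^{\ell}(R)$ would be a nonzerodivisor on it, using the induction hypothesis on $R/(x)$ and the exact sequence $\HIJ^{\ell-1}(R/(x))\to\HIJ^{\ell}(R)\overset{x}{\to}\HIJ^{\ell}(R)$; this forces $\Ass(\HIJ^{\ell}(R))=\{(0)\}$, hence $(0)\in\WIJ$ by Corollary \ref{KTY1-5}, hence $\WIJ=\Spec(R)$ and $\HIJ^{\ell}(R)=0$, a contradiction. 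If you want to salvage your strategy you would need a new argument, valid only over local rings, linking $\HIJ^{i}(M)$ to the $J$-adic quotients; as written, that link is missing.
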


\begin{proof}
We proceed by induction on $r=\dim\, M/JM$.
If $r=-1$, then $M=0$ by Nakayama's lemma, and  hence $\HIJ ^{i} (M)=0$ for any integer $i \geq 0$.

Now assume that $r \geq 0$. 
There is a finite filtration $0 = M_0 \subsetneq M_1 \subsetneq \cdots \subsetneq M_s = M$ of $M$ such that $M_j /M_{j-1} \cong R/{\p}_j$ for $\p_j \in \Supp (M)$ and $j=1, \ldots , s$. 
Then there are short exact sequences $0 \to M_{j-1} \to M_j \to R/{\p}_j \to 0$ for $j=1, \ldots , s$, 
and hence we have exact sequences
\[ \HIJ ^i (M_{j-1}) \to \HIJ ^i (M_j) \to \HIJ ^i (R/{\p}_j) \]
for all integers $i$ and $j$ with $i\geq 0$ and $1 \leq j \leq s$. 
Note that 
\[ \dim\, R/({\p}_j +J) \leq \dim\, R/(\Ann (M)+J) = \dim\, M/JM = r. \] 
Thus we may assume that $M=R/\P$ with $\P \in \Spec (R)$.

Since we show in  Theorem \ref{IDT} that 
$\HIJ ^i (R/\P) \cong H_{I(R/\P), J(R/\P)}^i(R/\P)$, 
replacing  $R$  by  $R/\P$, we may assume that  $R$  is an integral domain and $M =R$. 

Suppose that $\HIJ ^{\ell} (R) \neq 0$ for some integer $\ell >r$. 
We would like to derive contradiction. 
Note in this case that we have $\Ass _R (\HIJ ^{\ell} (R)) \not= \emptyset$. 

First, let us assume that  $\Ass _R (\HIJ ^{\ell} (R))$  contains a nonzero prime ideal  $\Q$. 
Then take a nonzero element $x \in \Q$. 
From the obvious short exact sequence $0 \to R \overset{x}{\to} R \to R/(x) \to 0$, one gets an exact sequence 
\[ \HIJ ^{\ell-1} (R/(x)) \to \HIJ ^{\ell} (R) \overset{x}{\to} \HIJ ^{\ell} (R). \]
Note that $\dim\, R/(J+(x))=r-1< \ell-1$, hence the induction hypothesis implies    $\HIJ ^{\ell-1} (R/(x))=0$. 
This shows that the element $x$ is $\HIJ ^{\ell} (R)$-regular. 
However, the element $x$ is in the associated prime $\Q$ of $\HIJ ^{\ell} (R)$, hence is a zero-divisor on  $\HIJ ^{\ell}(R)$. 

This contradiction forces  $\Ass _R (\HIJ ^{\ell} (R))= \{ (0) \}$. 
Note from Proposition \ref{KTY1-1} and \ref{KTY1-5} $(\rm 5)$ that 
$\Ass _R (\HIJ ^{\ell} (R)) \subseteq \WIJ$. 
Hence we have   $(0) \in \WIJ$. 
Since the set  $\WIJ$  is closed under specialization, 
 one has $\WIJ =\Spec (R)$. 
In this case one  easily sees that $\HIJ ^{\ell} (R)=0$ for any $\ell > 0$, 
which is again a contradiction.
\end{proof}

\begin{cor}\label{van for dim R/J}
Let  $R$  be a local ring and let $M$  be an $R$-module that is not necessarily finitely generated. 
Then $\HIJ ^i (M) = 0$ for any  $i > \dim\, R/J$. 
\end{cor}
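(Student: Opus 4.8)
The plan is to bootstrap from the finitely generated case, Theorem \ref{Semi-6}, using the direct-limit compatibility of $\HIJ^i$ recorded in Proposition \ref{comm injlim}. We may assume $J \neq R$.

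First I would write $M$ as the filtered direct limit $M = \varinjlim_{\lambda} M_\lambda$ of its finitely generated $R$-submodules $M_\lambda$, the transition maps being the inclusions. By Proposition \ref{comm injlim}, the functor $\HIJ^i$ commutes with this limit, so
\[
\HIJ^i(M) \;\cong\; \varinjlim_{\lambda} \HIJ^i(M_\lambda)
\]
for every integer $i$. Then, for a fixed $\lambda$, the module $M_\lambda/JM_\lambda$ is a finitely generated $R/J$-module, so its support is contained in $V(J)$ and hence $\dim M_\lambda/JM_\lambda \leq \dim R/J$. Applying Theorem \ref{Semi-6} to the finitely generated module $M_\lambda$ gives $\HIJ^i(M_\lambda) = 0$ for every $i > \dim M_\lambda/JM_\lambda$, in particular for every $i > \dim R/J$. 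Since this bound does not depend on $\lambda$, passing to the direct limit yields $\HIJ^i(M) = 0$ for all $i > \dim R/J$.

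The argument is essentially formal once Theorem \ref{Semi-6} and Proposition \ref{comm injlim} are in place, so I do not expect a genuine obstacle. The only point requiring a moment's care is the uniform dimension bound $\dim M_\lambda/JM_\lambda \leq \dim R/J$ over all $\lambda$, and this is immediate from the fact that each $M_\lambda/JM_\lambda$ is supported inside the closed subset $V(J)$ of $\Spec(R)$.
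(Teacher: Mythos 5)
Your proof is correct and follows essentially the same route as the paper: write $M$ as a direct limit of its finitely generated submodules, note $\dim M_\lambda/JM_\lambda \leq \dim R/J$, apply Theorem \ref{Semi-6} termwise, and conclude via Proposition \ref{comm injlim}. The extra justification you give for the uniform dimension bound (support in $V(J)$) is a fine elaboration of the step the paper states without comment.
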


\begin{proof}
Since every $R$-module is a direct limit of finitely generated submodules, 
we may write 
$M = \varinjlim _{\lambda} M _{\lambda}$  where each  $M_{\lambda}$ is a finitely generated $R$-module.
Note that if  $i > \dim\, R/J$, then  $i > \dim\, M_{\lambda}/JM_{\lambda}$. 
Therefore, by Proposition \ref{comm injlim}, 
we have 
$\HIJ ^i (M) = \varinjlim _{\lambda} \HIJ ^i (M_{\lambda}) = 0$. 
\end{proof}

Grothendieck's nonvanishing theorem says that 
the ordinary local cohomology module  $\Hm ^{r}(M)$ does not vanish whenever 
$R$ is a local ring with maximal ideal $\m$ and  $M$  is a finitely generated  $R$-module of dimension  $r$. 
The following theorem can be thought of as a generalization of this result.

\begin{thm}\label{KTY1-9}
Let   $M$  be a finitely generated module over a local ring $R$  with maximal ideal $\m $. 
Suppose that $I+J$ is an $\m $-primary ideal. 
Then we have the equality 
\[ \sup \{~ i \mid \HIJ ^{i} (M) \neq 0 ~\} = \dim\, M/JM. \]
\end{thm}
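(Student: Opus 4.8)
The plan is to reduce at once to the case $I=\m$, and then to combine the vanishing half, which is already contained in Theorem \ref{Semi-6}, with a nonvanishing half obtained by comparing $\HmJ^{\bullet}$ with the ordinary local cohomology of $M/JM$.

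First I would carry out the reduction. Because $I+J$ is $\m$-primary there is an integer $k$ with $\m^{k}\subseteq I+J$, while $I\subseteq\m$ trivially; hence, for an arbitrary ideal $\a$ of $R$, the condition ``$I^{n}\subseteq\a+J$ for some $n$'' is equivalent to ``$\m^{n}\subseteq\a+J$ for some $n$'' (for the nontrivial implication, if $I^{n}\subseteq\a+J$ then $\m^{kn}\subseteq(I+J)^{n}\subseteq I^{n}+J\subseteq\a+J$). Taking $\a=\Ann(x)$ for $x\in M$ and recalling Definition \ref{def-TF}, this shows $\GIJ(M)=\GmJ(M)$ for every $R$-module $M$, so $\GIJ=\GmJ$ as functors and therefore $\HIJ^{i}=\HmJ^{i}$ for all $i$. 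Also $I+J$ being $\m$-primary forces $J\neq R$, and we may assume $M\neq0$. Setting $r=\dim M/JM$, it thus suffices to prove $\HmJ^{i}(M)=0$ for $i>r$ together with $\HmJ^{r}(M)\neq0$.

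The first assertion is Theorem \ref{Semi-6} with $I$ replaced by $\m$. For the second, I would use the short exact sequence $0\to JM\to M\to M/JM\to0$ and the resulting exact sequence
\[
\HmJ^{r}(M)\longrightarrow\HmJ^{r}(M/JM)\longrightarrow\HmJ^{r+1}(JM).
\]
Here $JM/J^{2}M$ is a subquotient of $M$ annihilated by $J$, so $\Supp(JM/J^{2}M)\subseteq V(J)\cap\Supp M=\Supp(M/JM)$, whence $\dim(JM)/J(JM)\leq r$ and $\HmJ^{r+1}(JM)=0$ by Theorem \ref{Semi-6} once more. Thus $\HmJ^{r}(M)\to\HmJ^{r}(M/JM)$ is surjective. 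Since $M/JM$ is a $J$-torsion module, Corollary \ref{J-tor} identifies $\HmJ^{r}(M/JM)$ with $\Hm^{r}(M/JM)$; and $M/JM$ is a finitely generated $R$-module that is nonzero by Nakayama's lemma and has dimension $r$, so Grothendieck's nonvanishing theorem gives $\Hm^{r}(M/JM)\neq0$. Hence $\HmJ^{r}(M)\neq0$, which completes the proof.

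The only delicate point is choosing the right route for the nonvanishing. A direct attack through a prime filtration $0\subsetneq M_{1}\subsetneq\cdots\subsetneq M_{s}=M$ is awkward, because the top cohomology $\HmJ^{r}$ need not be detected on the quotient $R/\p_{j}$ realizing the largest value of $\dim R/(\p_{j}+J)$. Passing instead to $M/JM$ removes this difficulty, since $M/JM$ is genuinely $J$-torsion and therefore satisfies $\HmJ^{\bullet}=\Hm^{\bullet}$, reducing everything to the classical Grothendieck nonvanishing theorem; and it is exactly in the opening reduction $\GIJ=\GmJ$ that the $\m$-primary hypothesis is used, which is why it cannot be omitted.
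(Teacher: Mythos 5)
Your proof is correct and follows essentially the same route as the paper: reduce to $I=\m$ using the $\m$-primariness of $I+J$ (which the paper does via Proposition \ref{replace IJ}, and you verify directly), then combine Theorem \ref{Semi-6} with the exact sequence coming from $0\to JM\to M\to M/JM\to 0$, Corollary \ref{J-tor}, and Grothendieck's nonvanishing theorem applied to $M/JM$. The only differences are minor expository ones (your explicit element-level check of $\GIJ=\GmJ$ and the support argument for $\dim JM/J^{2}M\leq r$), so there is nothing to correct.
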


\begin{proof}
In virtue of Theorem \ref{Semi-6}, we only have to prove that $\HIJ ^{r} (M) \neq 0$ for $r=\dim\, M/JM$. 
Since $I+J$ is an $\m $-primary ideal, we have $\HIJ ^{i} (M)=\HmJ ^{i} (M)$ for any integer $i$. 
Thus we may assume that $I=\m$.
The exact sequence $ 0 \to JM \to M \to M/JM \to 0$ induces an exact sequence 
\[ \HmJ ^{r} (M) \to \HmJ^{r} (M/JM) \to \HmJ ^{r+1} (JM) .\]
We see from Theorem \ref{Semi-6} that $\HmJ ^{r+1} (JM)=0$ because $\dim\, JM/J^{2}M \leq \dim\, M/J^{2}M =\dim\, M/JM=r$. 
Furthermore, it follows from Corollary \ref{J-tor} and Grothendieck's non-vanishing theorem that  
\[ \HmJ ^{r} (M/JM)=\Hm^{r}(M/JM) \neq 0. \] 
Consequently, the exact sequence implies $\HmJ ^{r} (M)\neq 0$. 
\end{proof}

\begin{rem}
$(\rm 1)$\ 
If $J=R$, then the assertion of Theorem \ref{Semi-6} does not necessarily hold, for  $\dim\, M/JM=-1<0$ and  $\HIJ ^0 (M) \cong \GIJ (M)=M$.

\par\noindent
$(\rm 2)$\ 
If $R$ is a non-local ring, then the assertion of Theorem \ref{Semi-6} does not necessarily hold.

For example, let $R=k[x]$ be a polynomial ring over a field $k$, and set $I =(x-1)$, $J = I \cap (x) = (x^2 -x)$, and $M=R$. 
Then one has $\dim\, M/JM=0<1$ but $\HIJ ^1 (M) \neq 0$. 
\end{rem}

Even in the non-local case, one has the following result on the vanishing of local cohomology modules with respect to $(I, J)$.

\begin{thm}\label{non-local van}
Let $M$ be a finitely generated $R$-module.
Then 
 \begin{enumerate}
 \item[{(\rm 1)}]\ $\HIJ ^i (M)=0$ for all integers $i>\dim\, M$.
 \item[{(\rm 2)}]\ $\HIJ ^i (M)=0$ for all integers $i>\dim\, M/JM+1$.
 \end{enumerate}
\end{thm}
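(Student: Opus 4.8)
My plan is to deduce both parts from the direct limit description of $\HIJ^{i}$ in Theorem \ref{dir-lim}, which reduces everything to the ordinary local cohomology modules $H^{i}_{\a}(M)$ with $\a\in\WTIJ$.

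Part $(\rm 1)$ is then immediate. Theorem \ref{dir-lim} provides a natural isomorphism $\HIJ^{i}(M)\cong\varinjlim_{\a\in\WTIJ}H^{i}_{\a}(M)$, and, $M$ being finitely generated, Grothendieck's vanishing theorem gives $H^{i}_{\a}(M)=0$ for every ideal $\a$ and every $i>\dim M$. Passing to the direct limit yields $\HIJ^{i}(M)=0$ for $i>\dim M$.

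For part $(\rm 2)$ I would first reduce to the case $\Ann_{R}M=0$: the natural surjection $R\to R/\Ann M$ satisfies the hypothesis of Theorem \ref{IDT}, so $\HIJ^{i}(M)\cong H^{i}_{I(R/\Ann M),\,J(R/\Ann M)}(M)$, and over $R/\Ann M$ we have $\Supp M=\Spec(R/\Ann M)$, so that $\dim M/JM$ coincides with the dimension of $(R/\Ann M)/J(R/\Ann M)$. Hence we may assume $\Supp M=\Spec R$; set $d:=\dim M/JM=\dim R/J$, and note that there is nothing to prove unless $d<\infty$. Now fix an integer $i>d+1$. By Theorem \ref{dir-lim} it is enough to check that, for every $\a\in\WTIJ$, each element of $H^{i}_{\a}(M)$ is killed by the transition map $H^{i}_{\a}(M)\to H^{i}_{\b}(M)$ to some $\b\in\WTIJ$ with $\b\subseteq\a$. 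Fix such an $\a$ and consider the ideal $(\a+J)/J$ of $R/J$, which has Krull dimension $d$; since $\a\in\WTIJ$ we have $\sqrt{(\a+J)/J}\supseteq\sqrt{(I+J)/J}$. By Kronecker's theorem — in a noetherian ring of dimension $d$ every ideal contains $d+1$ elements generating an ideal with the same radical, which follows at once from prime avoidance together with Krull's principal ideal theorem — there exist $\bar b_{1},\dots,\bar b_{d+1}\in(\a+J)/J$ with $\sqrt{(\bar b_{1},\dots,\bar b_{d+1})}=\sqrt{(\a+J)/J}$. Lifting these to $b_{1},\dots,b_{d+1}\in\a$ and putting $\b=(b_{1},\dots,b_{d+1})$, we obtain $\b\subseteq\a$ and $\sqrt{\b+J}=\sqrt{\a+J}\supseteq\sqrt I$, whence $I^{m}\subseteq\b+J$ for some $m$ and thus $\b\in\WTIJ$. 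On the other hand $H^{i}_{\b}(M)$ is computed by the ordinary \v{C}ech complex $C^{\bullet}_{b_{1},\dots,b_{d+1}}\otimes_{R}M$, which has length $d+1$, so $H^{i}_{\b}(M)=0$ for $i>d+1$. Therefore the transition map $H^{i}_{\a}(M)\to H^{i}_{\b}(M)$ is zero, and consequently $\HIJ^{i}(M)=\varinjlim_{\a}H^{i}_{\a}(M)=0$.

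The crux is this last step: inside a prescribed $\a\in\WTIJ$ one must locate another member of $\WTIJ$ generated by only $d+1$ elements, so that its ordinary \v{C}ech complex is short enough to force the required vanishing. The reduction to $\Ann M=0$ is what makes $d$ — the dimension of $R/J$ — the relevant number, and Kronecker's $(d+1)$-generators-up-to-radical bound is the essential input; this is also exactly the reason the non-local estimate carries an extra $+1$ compared with the sharper local bound of Theorem \ref{Semi-6}. The case $J=R$ is covered automatically (then $d=-1$ and the zero ring $R/J$ needs no generators), although one may also simply observe that in that case $\GIJ$ is the identity functor, so $\HIJ^{i}(M)=0$ for all $i>0=d+1$.
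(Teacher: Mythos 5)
Your argument is correct, but for part (2) it takes a genuinely different route from the paper's. Part (1) is exactly the paper's argument (Theorem \ref{dir-lim} plus Grothendieck's vanishing theorem). For part (2) the paper proceeds by induction on $r=\dim\, M/JM$: the case $r=-1$ is handled by Nakayama's lemma (so that $M$ is $(I,J)$-torsion), and for $r\geq 0$ the inductive step is carried out along the lines of the local Theorem \ref{Semi-6} (prime filtration, reduction to a domain via Theorem \ref{IDT}, and a regular-element argument on the associated primes of $\HIJ^{\ell}(R)$). You instead reduce modulo $\Ann(M)$ via Theorem \ref{IDT} and then work inside the direct system of Theorem \ref{dir-lim}: Kronecker's bound (every ideal of a $d$-dimensional noetherian ring has the radical of an ideal generated by $d+1$ elements) produces, inside each $\a\in\WTIJ$, a smaller ideal $\b\in\WTIJ$ generated by $d+1$ elements, so that $H^{i}_{\b}(M)=0$ and hence the transition map $H^{i}_{\a}(M)\to H^{i}_{\b}(M)$ is zero for $i>d+1$, killing the colimit; your checks of the order and direction of the transition maps, the lifting of generators from $(\a+J)/J$ into $\a$, and the edge cases $J=R$ and $\dim\, R/J=\infty$ are all in order. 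In effect your proof is the paper's later proposition bounding $\HIJ^{i}(M)$ above by $\ara(I\bar{R})$, $\bar{R}=R/\sqrt{J+\Ann(M)}$, combined with the classical estimate $\ara\leq\dim+1$. What each approach buys: yours avoids the somewhat delicate non-local adaptation of the Semi-6 induction (where $\dim\, R/(J+(x))$ need not drop by one), at the price of invoking Kronecker's theorem, which you only sketch but which is indeed classical; the paper's induction is more self-contained but leaves its non-local details to the reader.
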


\begin{proof}
$(\rm 1)$ 
This easily follows from Theorem \ref{dir-lim} and Grothendieck's vanishing theorem. 

\par\noindent
$(\rm 2)$ 
We prove this by induction on $r=\dim\, M/JM$. 
When $r=-1$, Nakayama's Lemma says that $(1+a)M=0$ for some $a\in J$. 
Hence we have $Jx=Rx$ for any $x\in M$, which implies that the $R$-module $M$ is $(I, J)$-torsion. 
Proposition \ref{KTY1-5} $(\rm 1)$ shows that $\HIJ ^i (M)=0$ for every $i>0=r+1$, as desired. 
When $r\ge 0$, we can prove the assertion along the lines as in the proof of Theorem \ref{Semi-6}. 
\end{proof}

As one of the main theorems of this section, we shall prove a generalization of Lichtenbaum-Hartshorne theorem in Theorem \ref{GLHVT}. 
For this we begin with the following lemma.

\begin{lem}\label{lem-LH}
Let  $n$ be a nonnegative integer.
Suppose that $\HIJ^{i}(R)=0$ for all $i>n$. 
Then the following hold for any $R$-module $M$ which is not necessarily finitely generated. 
\begin{itemize}
\item[(1)]
$\HIJ^{i}(M)=0$ for all $i>n$. \vspace{4pt} 
\item[(2)]
$\HIJ ^n (M) \cong \HIJ ^n (R) \otimes _R M$
\end{itemize}
\end{lem}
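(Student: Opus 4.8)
The plan is to prove both statements simultaneously by exploiting the fact that $\HIJ^i(-)\cong H^i(C^\bullet_{\aa,J}\otimes_R-)$ from Theorem~\ref{L=C}, where $\aa=a_1,\dots,a_s$ generates $I$. Since the generalized \v{C}ech complex $C^\bullet_{\aa,J}$ is a bounded complex of flat $R$-modules (each term being a localization of $R$), the functor $C^\bullet_{\aa,J}\otimes_R-$ is exact, so for an arbitrary $R$-module $M$ there is a hyper-Tor type spectral sequence, or more simply: writing $M$ as a quotient of a free module and inducting, the statements should reduce to the case $M$ free, hence (by commuting with direct sums, which $\HIJ^i$ does by Proposition~\ref{comm injlim}) to $M=R$, which is the hypothesis. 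Let me make this precise.

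First I would establish (1). Take a free presentation $F_1\to F_0\to M\to 0$, or better, a free resolution, but since we only have a right-derived functor and arbitrary modules, the cleanest route is: every module embeds in... no — rather, every module is a quotient of a free module $F\twoheadrightarrow M$ with kernel $K$, giving $0\to K\to F\to M\to 0$ and the long exact sequence $\cdots\to\HIJ^i(F)\to\HIJ^i(M)\to\HIJ^{i+1}(K)\to\cdots$. For $i>n$ we have $\HIJ^i(F)=\bigoplus\HIJ^i(R)=0$ by hypothesis and Proposition~\ref{comm injlim}, so $\HIJ^i(M)\hookrightarrow\HIJ^{i+1}(K)$; iterating this with $K$ in place of $M$ shows $\HIJ^i(M)$ injects into $\HIJ^{i+k}(\Omega^k)$ for every $k$, and since the \v{C}ech description bounds cohomological dimension by $s$ uniformly, $\HIJ^{i+k}=0$ once $i+k>s$. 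Hence $\HIJ^i(M)=0$ for all $i>n$, proving (1).

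For (2), with (1) in hand the functor $\HIJ^n(-)$ is right exact: applying cohomology to $0\to K\to F\to M\to 0$ gives $\HIJ^n(F)\to\HIJ^n(M)\to\HIJ^{n+1}(K)=0$. Both $\HIJ^n(-)$ and $\HIJ^n(R)\otimes_R-$ are right exact functors commuting with direct sums (for the latter this is clear; for the former, Proposition~\ref{comm injlim}), and they agree on $R$ hence on all free modules. A standard argument — choose $F_1\to F_0\to M\to 0$ and compare the two right-exact sequences via the five lemma, using that the natural transformation $\HIJ^n(R)\otimes_R N\to\HIJ^n(N)$ (coming from $C^\bullet_{\aa,J}\otimes_R N$ in top degree, noting $H^s(C^\bullet_{\aa,J}\otimes_R R)\otimes_R N\to H^s(C^\bullet_{\aa,J}\otimes_R N)$ when $n=s$, and otherwise reindexing) is an isomorphism on free modules — yields the isomorphism $\HIJ^n(M)\cong\HIJ^n(R)\otimes_R M$. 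The one point requiring care is the construction of the natural transformation: I would obtain it from the edge map of the hypercohomology spectral sequence $H^p(C^\bullet_{\aa,J})\otimes^{\mathbf L}_R M$, or concretely from the surjection $C^n_{\aa,J}\otimes M\to \HIJ^n(M)$ factoring through $\HIJ^n(R)\otimes M$ since everything in degrees $>n$ vanishes after tensoring.

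The main obstacle I anticipate is making the naturality and the right-exactness argument for (2) fully rigorous without circularity — specifically, pinning down the natural map $\HIJ^n(R)\otimes_R M\to\HIJ^n(M)$ and checking it is compatible with the long exact sequences. Everything else (the dimension-shifting in (1), commuting with direct sums) is routine given the results already proved. An alternative for (2) that sidesteps the explicit natural transformation: since $\HIJ^{\bullet}(-)$ is computed by $C^\bullet_{\aa,J}\otimes_R-$ and this complex is concentrated in degrees $0,\dots,s$ with all its cohomology except degree $n$ vanishing when applied to $R$, a truncation/quasi-isomorphism argument in the derived category replaces $C^\bullet_{\aa,J}$ by $\HIJ^n(R)[-n]$, and tensoring with $M$ (derived, but the complex is flat so underived) immediately gives $\HIJ^n(M)\cong\mathrm{Tor}_0$ in degree $n$ together with vanishing elsewhere; I would likely present this derived-category version as it is shortest.
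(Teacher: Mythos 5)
Your argument for (1) is correct, and it is a mild variant of the paper's: the paper first reduces to finitely generated $M$ via Proposition \ref{comm injlim} and then runs a descending induction on $i$, starting from the vanishing $\HIJ^{i}(M)=0$ for $i>\dim M$ (Theorem \ref{non-local van}) and using a presentation $0\to N\to R^{m}\to M\to 0$; you instead work with arbitrary $M$ directly, shifting along syzygies of a (possibly infinite) free presentation, using that $\HIJ^{i}$ commutes with direct sums, and terminating with the uniform bound $\HIJ^{j}=0$ for $j>s$ coming from the length of $C^{\bullet}_{\aa,J}$. Either termination works, and your version has the small advantage of avoiding the reduction to finitely generated modules. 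For (2) the paper is laconic: right exactness (from (1)) plus commutation with direct limits, ``hence it is represented as a tensor functor'' --- i.e.\ Eilenberg--Watts. Your five-lemma comparison on a free presentation is exactly this theorem made explicit, and the natural transformation you worry about does exist and is elementary: for any complex there is a natural map $H^{n}(C^{\bullet}_{\aa,J})\otimes_{R}M\to H^{n}(C^{\bullet}_{\aa,J}\otimes_{R}M)$, because the image of $Z^{n}\otimes M$ in $C^{n}_{\aa,J}\otimes M$ consists of cocycles and the image of $B^{n}\otimes M$ of coboundaries; it is the identity for $M=R$, so your comparison argument goes through.

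Two of your concrete suggestions for (2), however, are not right as stated. First, there is no surjection $C^{n}_{\aa,J}\otimes M\to\HIJ^{n}(M)$: the vanishing of cohomology in degrees $>n$ does not force every element of $C^{n}_{\aa,J}\otimes M$ to be a cocycle, so $\HIJ^{n}(M)$ is only a subquotient, not a quotient, of $C^{n}_{\aa,J}\otimes M$. Second, and more seriously, the derived-category version you say you would actually present assumes that all cohomology of $C^{\bullet}_{\aa,J}$ outside degree $n$ vanishes; the hypothesis only kills degrees $>n$, and $\HIJ^{i}(R)$ for $i<n$ (for instance $\GIJ(R)$) need not vanish, so you may not replace $C^{\bullet}_{\aa,J}$ by $\HIJ^{n}(R)[-n]$. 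The repair is the canonical truncation: $\tau_{\leq n}C^{\bullet}_{\aa,J}\to C^{\bullet}_{\aa,J}$ is a quasi-isomorphism under the hypothesis, and for a complex with no cohomology above degree $n$ the $n$-th cohomology of its derived tensor product with $M$ is $H^{n}\otimes_{R}M$ (right exactness of the top cohomology); equivalently, use the hyper-Tor spectral sequence with $E_{2}^{p,q}=\mathrm{Tor}^{R}_{-p}(\HIJ^{q}(R),M)$ converging to $\HIJ^{p+q}(M)$, in which total degree $n$ contains only the corner term $\HIJ^{n}(R)\otimes_{R}M$ and total degrees $>n$ contain nothing --- this gives (1) and (2) simultaneously. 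With either repair your proof is complete.
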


\begin{proof}
First we should note that, by virtue of Proposition \ref{comm injlim}, we only have to prove the lemma for a finitely generated $R$-module $M$. 

$(1)$  
We have shown in the previous theorem that $\HIJ^{i}(M)=0$ if $i> \dim\, M$. 
We prove the assertion by descending induction on $i$. 
There exists a short exact sequence 
\[ 0 \to N \to R^{m} \to M \to 0\]
where $m$ is an integer and $N$ is a finitely generated $R$-module. 
This sequence induces an exact sequence 
\[ \HIJ^{i}(R^{m})\to \HIJ^{i}(M) \to \HIJ^{i+1}(N). \]
By the induction hypothesis, the equality $\HIJ^{i+1}(N)=0$  holds. 
Thus we see that $\HIJ^{i}(M)=0$.

\par \noindent
$(2)$ 
By claim $(1)$, the functor  $\HIJ ^n$  is a right exact functor on the category of $R$-modules, hence it is represented as a tensor functor. 
\end{proof}

For an $R$-module $M$, we set 
\[ \Assh _{R} (M)=\{~\p \in \Ass _R (M)\mid \dim\, R/\p =\dim _R\, M~\}.\]

We are now ready to prove the generalized version of Lichtenbaum-Hartshorne theorem.

\begin{thm}\label{GLHVT}
Let  $(R, \m)$ be a local ring of dimension $d$, and let $I$ and $J$ be proper ideals of $R$. 
Then the following conditions are equivalent. 
\begin{enumerate}
	\item[{(\rm 1)}]\ $\HIJ ^{d} (R)=0.$
	\item[{(\rm 2)}]\ For each prime ideal $\p \in \Assh (\hat{R})$ with $J \hat{R} \subseteq \p$, 
	we have $\dim\, \hat{R}/(I \hat{R}+\p)>0$.
\end{enumerate}
\end{thm}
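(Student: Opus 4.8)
The plan is to reduce the generalized Lichtenbaum--Hartshorne statement to the classical one by exploiting the tools already established in the excerpt, especially Theorem \ref{dir-lim}, Theorem \ref{IDT} (base change along the completion map, which is legitimate since the completion sends $J$ to $J\widehat R$), and Corollary \ref{van for dim R/J}. First I would observe that both conditions are insensitive to passing to $\widehat R$: for (1) we have $\HIJ^{d}(R)\otimes_R\widehat R\cong H^{d}_{I\widehat R,J\widehat R}(\widehat R)$ by flat base change together with the representability in Lemma \ref{lem-LH}(2) (noting $\dim\widehat R=d$ and $\HIJ^{i}(R)=0$ for $i>d$ by Theorem \ref{non-local van}(1)), and $\HIJ^{d}(R)=0$ iff its completion vanishes since $\widehat R$ is faithfully flat; condition (2) is manifestly a statement about $\widehat R$. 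So I may assume from the outset that $R$ is complete.

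Next I would rewrite $\HIJ^{d}(R)$ using Theorem \ref{dir-lim}:
\[
\HIJ^{d}(R)\;\cong\;\varinjlim_{\a\in\WTIJ}\HA^{d}(R).
\]
Because direct limits are exact, $\HIJ^{d}(R)=0$ is equivalent to the statement that for every $\a\in\WTIJ$ and every $\xi\in\HA^{d}(R)$ there is $\b\in\WTIJ$ with $\b\subseteq\a$ killing $\xi$ in $\HB^{d}(R)$. The key simplification is that the ideals $\a$ appearing here may be replaced by $J+\a$, hence by ideals containing $J$; so effectively $\HIJ^{d}(R)=\varinjlim H^{d}_{\a}(R)$ where $\a$ runs over ideals with $J\subseteq\a$ and $I^{n}\subseteq\a$ for some $n$, i.e. $\a\supseteq J$ with $\sqrt{\a}\supseteq\sqrt{(I+J)}\cap\cdots$ — more precisely $\a\in\WTIJ$ with $J\subseteq\a$ means $\sqrt\a\supseteq\sqrt{I+J}$ fails in general; rather $I^{n}\subseteq\a$, so $\sqrt\a\supseteq\sqrt I$ and $\a\supseteq J$, giving $\sqrt\a\supseteq\sqrt{I+J}$. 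Thus $\HIJ^{d}(R)$ is a direct limit of classical top local cohomology modules $H^{d}_{\a}(R)$ with $\sqrt\a\supseteq\sqrt{I+J}$, ordered by reverse inclusion, and cofinally we may take $\a=J+\b$ with $\b$ an $I$-primary-type ideal; in fact the cofinal system can be taken to be $\{J+\mathfrak q : \mathfrak q\supseteq I\text{ some power}\}$, equivalently $\a$ with $V(\a)=V(I+J)\cap V(\text{something})$.

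Then I would apply the classical Lichtenbaum--Hartshorne vanishing theorem to each $H^{d}_{\a}(R)$ over the complete local ring $R$: $H^{d}_{\a}(R)=0$ iff $\dim R/(\a+\p)>0$ for every $\p\in\Assh(R)$ with $\p\supseteq\ker(R\to\widehat R)$ — but $R$ is complete, so simply for every $\p\in\Assh(R)$. Since every relevant $\a$ contains $J$, the condition $\p\supseteq\a$-compatibility forces attention to $\p\in\Assh(R)$ with $J\subseteq\p$: if $J\not\subseteq\p$ then $\dim R/(\a+\p)<\dim R/\p=d$ already fails to be problematic because $J\subseteq\a$ gives $\a+\p\supsetneq\p$... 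I must be careful here. The clean statement is: $\HIJ^{d}(R)=0$ iff for every sufficiently small $\a$ in the cofinal system, $H^{d}_\a(R)=0$, and by LH this holds iff for all $\p\in\Assh(R)$ with $\a\subseteq\p$... no — LH says $H^d_\a(R)\neq 0$ iff there exists $\p\in\Assh(R)$ with $\dim R/(\a+\p)=0$. Taking the limit, $\HIJ^d(R)\neq 0$ iff for a cofinal set of $\a$ there is such a $\p$; since $\Assh(R)$ is finite, a pigeonhole/compactness argument pins down a single $\p\in\Assh(R)$ that is "bad" for all small $\a$, and $\dim R/(\a+\p)=0$ for all $\a\supseteq J$ with $\sqrt\a\supseteq\sqrt{I+J}$ forces $J\subseteq\p$ (else $\a+\p$ could be shrunk) and $\dim R/(I+J+\p)=0$, i.e. $\dim\widehat R/(I\widehat R+\p)=0$. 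Negating gives the equivalence with (2).

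The main obstacle I anticipate is the bookkeeping in the limit argument: ensuring the cofinal system of ideals $\a$ is described correctly (it is essentially $\{J+\b:\ \b\ \text{with}\ I^n\subseteq\b\}$, equivalently ideals $\a$ with $V(\a)=V(I+J)\cup W$ for specialization-closed $W$...), and then extracting, from the fact that for every small $\a$ \emph{some} $\p\in\Assh(R)$ witnesses nonvanishing, a \emph{single} such $\p$ that works uniformly. Finiteness of $\Assh(R)$ together with the fact that $\WTIJ$ (suitably truncated to contain $J$) is directed under reverse inclusion should close this gap: if no single $\p$ worked for all small $\a$, one could for each $\p\in\Assh(R)$ choose $\a_\p$ avoiding it, take $\a=\sum_\p\a_\p$ (still in the system), and contradict nonvanishing at level $\a$. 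I would also need to check the trivial-looking claim that replacing $\a$ by $J+\a$ does not change the colimit, which follows from $\G_{\a}\subseteq\G_{J+\a}$ composed appropriately with Proposition \ref{replace IJ}-type identities and the fact that $J\subseteq\a'$ makes $H^d_{\a'}$ the relevant terms; and that LH over $R$ (complete) can be invoked directly without re-completing.
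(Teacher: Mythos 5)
Your overall strategy (Theorem \ref{dir-lim} plus the classical Lichtenbaum--Hartshorne theorem termwise, then a pigeonhole over the finite set $\Assh(\hat{R})$) is genuinely different from the paper's, but as written it has several gaps, and the central ``key simplification'' is false. In $\WTIJ$ the order is $\a\le\b$ iff $\b\subseteq\a$, so the colimit in Theorem \ref{dir-lim} is taken over \emph{smaller and smaller} ideals; the subfamily of ideals containing $J$ sits at the bottom of the directed set, is not cofinal, and its colimit is $H^{d}_{I+J}(R)$ (the ideals $I^{n}+J$ are cofinal \emph{inside that subfamily}), not $\HIJ^{d}(R)$. These really differ: for $R=k[[x,y]]$, $I=\m$, $J=(x)$ one has $\HIJ^{2}(R)=0$ (the only member of $\Assh(R)$ is $(0)$, which does not contain $J$; concretely the ideals $(y^{n})\in\WTIJ$ kill the top cohomology), while $H^{2}_{I+J}(R)=H^{2}_{\m}(R)\neq0$. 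Consequently your extraction of ``$J\subseteq\p$'' at the end (``else $\a+\p$ could be shrunk'') rests on the wrong index set and, with that index set, the claim is simply false (same example, $\p=(0)$). With the correct index set the implication ``$\dim\hat{R}/(\a+\p)=0$ for all $\a\in\WTIJ$ forces $J\subseteq\p$'' is true but needs a genuine construction, e.g.\ a parameter argument as in Lemma \ref{inv-lem}, or the paper's treatment of the case $J\not\subseteq\p$ via Theorem \ref{IDT} applied to $R\to R/(\p\cap R)$ and Corollary \ref{van for dim R/J}.

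Two further gaps. First, vanishing of a filtered colimit is not detected termwise: your ``iff'' ($\HIJ^{d}\neq0$ iff cofinally many $H^{d}_{\a}\neq0$) is only valid in one direction, and the direction you need to show that failure of (2) forces $\HIJ^{d}(R)\neq0$ is exactly the missing one --- classical LH gives $H^{d}_{\a}(\hat{R})\neq0$ for every $\a$, but the transition maps could still annihilate everything. Closing this requires essentially the paper's argument: by Lemma \ref{lem-LH} pass to $\HIJ^{d}(\hat{R}/\p)$, identify it with $H^{d}_{I}(\hat{R}/\p)\cong H^{d}_{\m(\hat{R}/\p)}(\hat{R}/\p)$ by Corollary \ref{J-tor} (here $J\subseteq\p$ is used), and invoke Grothendieck's nonvanishing theorem. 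Also, in your pigeonhole you form $\a=\sum_{\p}\a_{\p}$, but enlarging $\a$ can only decrease $\dim R/(\a+\p)$; you need the product $\prod_{\p}\a_{\p}$ (still in $\WTIJ$ and below each $\a_{\p}$ in the order), together with the remark that the ``good'' ideals form a cofinal downward-closed set so that the colimit over them computes $\HIJ^{d}$. Second, your reduction to the complete case is not covered by the results you cite: Theorem \ref{IDT} requires $\varphi(J)=J\hat{R}$, which fails for the completion map, and Remark \ref{FBT} shows the base-change map $\HIJ^{i}(R)\otimes_{R}\hat{R}\to H^{i}_{I\hat{R},J\hat{R}}(\hat{R})$ need not be an isomorphism. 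The paper sidesteps this entirely: it keeps $I,J$ as ideals of $R$, uses Lemma \ref{lem-LH}(2) only to get $\HIJ^{d}(\hat{R})\cong\HIJ^{d}(R)\otimes_{R}\hat{R}$ with $\hat{R}$ viewed as an $R$-module, and then runs a prime filtration of $\hat{R}$ with the two cases $J\subseteq\p$ and $J\not\subseteq\p$.
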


\begin{proof}
$(\rm 1) \Rightarrow (\rm 2)$ 
Suppose that $\HIJ^{d} (R)=0$, 
and that there exists $\p \in \Assh (\hat{R})$  satisfying   
$J \hat{R} \subseteq \p$ and $\dim\, \hat{R}/(I\hat{R} +\p)=0 $. 
We would like to derive a contradiction. 

By Lemmma \ref{lem-LH} we have 
 $\HIJ ^{d} (\hat {R}/\p) =  0$.
On the other hand, since  $J \subseteq \p$, $\hat{R}/\p$  is a $J$-torsion module over  $R$. 
Hence Corollary \ref{J-tor} implies that  $\HIJ ^{d} (\hat {R}/\p) \cong  H _I ^d (\hat{R}/\p)$, which is isomorphic to  $H_{I(\hat{R}/\p)} ^d(\hat{R}/\p)$.
Note here that $(\hat{R}/\p, \m\hat{R}/\p)$ is a $d$-dimensional complete local ring and $(I\hat{R}+\p)/\p$ is $\m \hat{R}/\p$-primary ideal.
Thus we have
$\HIJ ^{d} (\hat{R}/\p) \cong H_{\m (\hat{R}/\p)}^d (\hat{R}/\p)$, which is nonzero by Grothendieck's nonvanishing theorem. 
This is a contradiction.

$(\rm 2) \Rightarrow (\rm 1)$ 
Suppose that $\HIJ^{d} (R) \neq 0$, we shall show a contradiction under the condition $(2)$.
Since $\hat{R}$ is faithfully flat, 
it holds by Lemma \ref{lem-LH} that
$$
\HIJ ^{d}(\hat {R})=\HIJ^{d}(R) \otimes _{R} \hat{R}\neq 0. 
$$
Considering a filtration of ideals of  $\hat{R}$;   
\[ 0=K_{0}\subsetneq  K_{1}\subsetneq \cdots \subsetneq K_{s-1}\subsetneq K_{s}=\hat{R}, \]
with  $K_{j}/K_{j-1} \cong \hat{R}/\p_{j}$ for prime ideals  $\p _{j}$ of $\hat{R}$  for $1\leq j\leq s$, 
we see that there is at least one prime ideal $\p$  of  $\hat{R}$ such that  $\HIJ ^{d} (\hat{R}/\p)\neq 0$. 

First consider the case that  $J \subseteq \p$. 
Then, since  $\hat{R}/\p$  is a $J$-torsion $R$-module, 
 it follows from Corollary \ref{J-tor} that  $\HIJ ^{d} (\hat{R}/\p) = H_I^d (\hat{R}/\p) = H_{I(\hat{R}/\p)} ^d (\hat{R}/\p)$.  
If  $\dim\, \hat{R}/\p <d$, then $ H_{I(\hat{R}/\p)} ^d (\hat{R}/\p) = 0$ by Grothendieck's vanishing theorem, and this is a contradiction. 
If  $\dim\, \hat{R}/\p =d$, then $\p \in \Assh (\hat{R})$,  hence  $\dim\, (\hat{R}/I\hat{R} + \p) >0$ by the assumption (2). 
Thus we have  $H_{I(\hat{R}/\p)} ^d (\hat{R}/\p) = 0$  by the Lichtenbaum-Hartshorne theorem. 
This is again a contradiction. 

Next consider the case  $J \not\subseteq \p$. 
Denote  $\bar{R} = R/\p \cap R$. 
Applying Theorem \ref{IDT} to the natural projection  $R \to \bar{R}$, we have  $\HIJ ^{d} (\hat{R}/\p)  =  H_{I\bar{R}, J\bar{R}} ^{d} (\hat{R}/\p)$. 
Since  $\dim\, \bar{R}/J\bar{R} < \dim\, \bar{R} \leq d$, 
it follows from Corollary \ref{van for dim R/J} that 
$H_{I\bar{R}, J\bar{R}} ^{d} (\hat{R}/\p) = 0$, which is  a  contradiction as well.  
\end{proof} 

\begin{rem}
In \cite[Theorem 1.1]{DNT} it is proved that the first condition in Theorem \ref{GLHVT} is equivalent to the condition that for each $\p\in\Assh(\hat{R})$ there exists $\q\in\WIJ$ with $\dim\,\hat{R}/(\q\hat{R}+\p) >0$.
We see that this condition implies the second condition in Theorem \ref{GLHVT}, but the opposite implication seems not obvious.
(The authors do not know how to prove the opposite implication directly.)
The point is that the second condition in Theorem \ref{GLHVT} is concerning the ideals $I$ and $J$, but not concerning the set $\WIJ$.
\end{rem}

Recall that the arithmetic rank of an ideal  $I$, denoted by $\ara(I)$, is defined to be the least number of elements of $R$ required to generate an ideal which has the same radical as $I$.

\begin{prop}
Let $M$ be an $R$-module. 
Then $\HIJ ^{i} (M)=0$ for any integer $i>\ara (I\bar {R})$,  where $\bar {R} =R/\sqrt{J+\Ann (M)}$.
\end{prop}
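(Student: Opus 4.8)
The plan is to reduce, by a chain of elementary manipulations from Sections 1 and 2, to a generalized \v{C}ech complex of length $t:=\ara(I\bar R)$ and then quote Theorem \ref{L=C}.

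First I would change the base ring to $R/\Ann(M)$. Applying Theorem \ref{IDT} to the natural surjection $R\to R':=R/\Ann(M)$ (which satisfies the hypothesis $\varphi(J)=JR'$ because it is surjective), we get $\HIJ^{i}(M)\cong H^{i}_{IR',JR'}(M)$ as $R'$-modules for every $i$. Since $R'/\sqrt{JR'+\Ann_{R'}M}=R'/\sqrt{JR'}=R/\sqrt{J+\Ann M}=\bar R$ and $IR'\cdot\bar R=I\bar R$, this reduction leaves the invariant $\ara(I\bar R)$ unchanged. Hence we may assume from the start that $\Ann_R M=0$, so that $\bar R=R/\sqrt J$. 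This step is not merely cosmetic: the functors $\Gamma_{I,J}$ and $\Gamma_{I+\Ann M,J}$ genuinely differ on the category of $R$-modules (they kill different injective hulls $E(R/\p)$), so one really has to pass to $R/\Ann M$ rather than just enlarge $I$ by $\Ann M$.

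Next, put $t=\ara(I\bar R)$ and choose $b_{1},\dots,b_{t}\in R$ whose residue classes in $\bar R$ generate an ideal with the same radical as $I\bar R$. Pulling this equality of radicals back to $R$ and using $\sqrt{A+\sqrt B}=\sqrt{A+B}$, we obtain $\sqrt{I+J}=\sqrt{(b_{1},\dots,b_{t})+J}$ in $R$. Now I chain together parts of Proposition \ref{replace IJ}: part (6) gives $\HIJ^{i}(M)=H^{i}_{I+J,\,J}(M)$; part (7), together with the radical equality just established, gives $H^{i}_{I+J,\,J}(M)=H^{i}_{(b_{1},\dots,b_{t})+J,\,J}(M)$; and part (6) again gives $H^{i}_{(b_{1},\dots,b_{t})+J,\,J}(M)=H^{i}_{(b_{1},\dots,b_{t}),\,J}(M)$. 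Thus $\HIJ^{i}(M)\cong H^{i}_{(b_{1},\dots,b_{t}),\,J}(M)$ for all $i$.

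Finally, let $\bb$ denote the sequence $b_{1},\dots,b_{t}$. By Theorem \ref{L=C} we have $H^{i}_{(b_{1},\dots,b_{t}),\,J}(M)\cong H^{i}(C^{\bullet}_{\bb,J}\otimes_{R}M)$, and the complex $C^{\bullet}_{\bb,J}=\bigotimes_{k=1}^{t}C^{\bullet}_{b_{k},J}$, hence also $C^{\bullet}_{\bb,J}\otimes_{R}M$, is concentrated in cohomological degrees $0,1,\dots,t$. Therefore its cohomology vanishes in all degrees $>t$, which gives $\HIJ^{i}(M)=0$ for $i>t=\ara(I\bar R)$, as claimed. The only point needing any real care is the first reduction and the verification that $\ara(I\bar R)$ is preserved by it; after that the argument is a direct application of Proposition \ref{replace IJ} and Theorem \ref{L=C}.
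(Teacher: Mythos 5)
Your proposal is correct and follows essentially the same route as the paper: reduce to $\Ann(M)=0$ via Theorem \ref{IDT}, lift a generating set of $\ara(I\bar R)$ elements and use Proposition \ref{replace IJ} to replace $I$ by the ideal they generate, then bound the cohomology by the length of the generalized \v{C}ech complex via Theorem \ref{L=C}. Your explicit chain of Proposition \ref{replace IJ} (6), (7), (6) just spells out the step the paper labels ``easy to see.''
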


\begin{proof}
Denote  $R^{\prime} =R/\Ann _{R} (M)$. 
Then  $\bar {R} = R^{\prime}/\sqrt{JR^{\prime}}$ and $\Ann _{R^{\prime}} (M)=0$.  Since we have an isomorphism $\HIJ ^{i}(M)\cong H^{i}_{IR^{\prime}, JR^{\prime}} (M)$  by Theorem \ref{IDT},  we may assume that $\Ann _{R} (M)=0$.

Let us denote $s =\ara (I\bar{R})$. 
Then we find  a sequence   $\aa = a_1, a_2, \ldots , a_s$ of $s$ elements  in  $R$ such that  $\sqrt{I\bar{R}}=\sqrt{\aa \bar{R}}$. 
Then it is easy to see from Proposition \ref{replace IJ} that the equality 
$$
\HIJ ^{i} (M) = H^{i}_{\aa R, J} (M) = 
H^{i} ( C^{\bullet }_{\aa, J} \otimes M) 
$$
holds for any $i$. 
Since the complex $C^{\bullet }_{\aa, J}$ is of length $s$, 
we see that $H^{i}(C^{\bullet }_{\aa, J} \otimes M)=0$  for all integers $i>s=\ara (I\bar{R})$.
\end{proof}

\section{The Local duality theorem and other functorial isomorphisms}

For a local ring $R$ with maximal ideal $\m$, 
we denote the functor $\Hom_{R}(-, E_{R}(R/\m))$ by $(-)^ {\vee} $. 
Let  $(R, \m)$ be a $d$-dimensional Cohen-Macaulay complete local ring.
Then it is well known that it holds the local duality theorem, which states 
 the existence of functorial isomorphisms 
$$
\Hm^{d-i}(M)^{\vee} \cong \Ext^{i}_{R}(M, K_{R}), 
$$
for a finitely generated $R$-module $M$  and any integer $i\geq 0$. 
Note that  $K_{R}$ is the canonical module of $R$ given as 
 $K_{R} = \Hm^{d}(R)^{\vee}$. 
The following theorem is thought of as a generalization of the local duality theorem.

\begin{thm}\label{LD}
Let $(R, \m)$ be a Cohen-Macaulay complete local ring of dimension $d$, 
and let  $J$  be a perfect ideal of  $R$  of  grade  $t$, i.e. $\pd _R R/J = \grade (J, R) =t$. 
Then, for a finitely generated $R$-module $M$, there is a functorial isomorphism \[ \HmJ ^{d-i} (M)^{\vee} \cong \Ext_{R}^{i-t} (M, K) \]
for all integer $i$, where $K=\HmJ ^{d-t} (R) ^{\vee}$.
\end{thm}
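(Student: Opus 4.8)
The plan is to reduce the statement to the classical local duality theorem for the Cohen-Macaulay complete local ring $R$ by first understanding $\HmJ^{d-t}(R)$ well enough to identify $K$. Since $J$ is perfect of grade $t$, the ring $R/J$ is Cohen-Macaulay of dimension $d-t$; the idea is that the functor $\HmJ^\bullet$ on $J$-torsion modules agrees with $\Hm^\bullet$ by Corollary \ref{J-tor}, while on the "other side" the vanishing $\HmJ^i(M)=0$ for $i>\dim M/JM$ (Theorem \ref{Semi-6}) and the nonvanishing at the top (Theorem \ref{KTY1-9}, using that $\m+J=\m$ is $\m$-primary) pin down the top nonvanishing degree. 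First I would establish that $\HmJ^{d-t}(R)\neq 0$ and $\HmJ^i(R)=0$ for $i>d-t$: indeed $\dim R/JR=d-t$ since $J$ is perfect, so Theorem \ref{Semi-6} gives the vanishing and Theorem \ref{KTY1-9} gives $\sup\{i\mid \HmJ^i(R)\neq 0\}=\dim R/JR=d-t$. In particular $K=\HmJ^{d-t}(R)^\vee$ is a well-defined finitely generated $R$-module (Matlis dual of an Artinian-ish module — one should check it is indeed finitely generated, which follows because $\HmJ^{d-t}(R)$ is $\m$-torsion, hence Artinian, so its Matlis dual is Noetherian).

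Next I would apply Lemma \ref{lem-LH} with $n=d-t$: since $\HmJ^i(R)=0$ for $i>d-t$, the functor $\HmJ^{d-t}$ is right exact on all $R$-modules and $\HmJ^{d-t}(M)\cong \HmJ^{d-t}(R)\otimes_R M$ for every $R$-module $M$. This is the crucial structural input. Taking Matlis duals, $\HmJ^{d-t}(M)^\vee\cong (\HmJ^{d-t}(R)\otimes_R M)^\vee\cong \Hom_R(M,\HmJ^{d-t}(R)^\vee)=\Hom_R(M,K)$, using the standard adjunction $(N\otimes_R M)^\vee\cong\Hom_R(M,N^\vee)$. This handles the case $i=d$ of the theorem (i.e. $\Ext^0$). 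For the general $i$, the plan is to compute the higher right derived functors of $M\mapsto \HmJ^{d-t}(M)$ on both sides: on one hand $L_j(\HmJ^{d-t}(R)\otimes_R -)=\Tor_j^R(\HmJ^{d-t}(R),M)$; on the other, one wants to relate $\HmJ^{d-j}(M)$ to these. The cleanest route is a spectral sequence / Grothendieck-style argument: take a finite free resolution $F_\bullet\to M$ (possible since $R$ is regular? — no, only Cohen-Macaulay, so instead take a bounded-below free resolution and truncate, or argue by induction on homological dimension via short exact sequences $0\to N\to R^m\to M\to 0$ and dimension shifting, exactly as in Lemma \ref{lem-LH}(1)).

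Concretely, I would argue by descending induction / dimension shifting: from $0\to N\to R^m\to M\to 0$ one gets the long exact sequence of the $\HmJ^\bullet$'s, which because $\HmJ^i(R^m)=0$ for $i>d-t$ breaks into isomorphisms $\HmJ^{d-i}(M)\cong \HmJ^{d-i-1}(N)$ for $i\geq 2$ and an exact sequence handling $i=0,1$. Dualizing and comparing with the long exact sequence of $\Ext_R^\bullet(-,K)$ coming from the same short exact sequence, together with the already-established case $\Ext^0$, one propagates the isomorphism $\HmJ^{d-i}(M)^\vee\cong\Ext_R^{i-t}(M,K)$ up in $i$ by the standard delta-functor comparison (both sides are the derived functors of $\Hom_R(-,K)$ up to the shift, since $\HmJ^{d-\bullet}(-)^\vee$ is, by right-exactness of $\HmJ^{d-t}$ and exactness of $(-)^\vee$, a universal $\delta$-functor agreeing with $\Hom_R(-,K)$ in degree zero). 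The main obstacle I anticipate is bookkeeping the index shift by $t$ correctly and verifying that $\HmJ^{d-\bullet}(-)^\vee$ really is the sequence of right derived functors of $\Hom_R(-,K)$ — i.e. that the "connecting" degrees $i=0,1$ behave correctly and that $\HmJ^{d-t+k}(M)=0$ for $k>0$ so the $\delta$-functor is effaceable/universal; this last point is precisely Lemma \ref{lem-LH}(1) applied after reducing to finitely generated $M$, so the pieces are all in place. One should also double-check that $K$ as defined coincides, when $J=0$ (so $t=0$), with the usual canonical module $K_R=\Hm^d(R)^\vee$, recovering classical local duality — this is immediate since $\HmJ^d(R)=\Hm^d(R)$ by Corollary \ref{J-tor} (as $R$ is $0$-torsion trivially), providing a sanity check on the formulation.
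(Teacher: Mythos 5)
Your reduction to a $\delta$-functor comparison is the same strategy as the paper's (set $T^i(-)=\HmJ^{d-t-i}(-)^{\vee}$, identify $T^0$ with $\Hom_R(-,K)$ via Lemma \ref{lem-LH}(2) and Matlis adjunction, then compare long exact sequences), but there is a genuine gap at the decisive step. For the comparison to propagate beyond degree zero — whether you phrase it as universality/effaceability of the $\delta$-functor or as dimension shifting along $0\to N\to R^m\to M\to 0$ — you must know that $T^i$ kills free modules for $i>0$, i.e.\ that $\HmJ^{\,j}(R)=0$ for all $j<d-t$. The vanishing you actually establish and invoke, $\HmJ^{\,j}(R)=0$ for $j>d-t$ (Theorem \ref{Semi-6} / Lemma \ref{lem-LH}(1)), lives in the wrong range: in your shifting step the isomorphism $\HmJ^{d-i}(M)\cong\HmJ^{d-i+1}(N)$ requires $\HmJ^{d-i}(R)=\HmJ^{d-i+1}(R)=0$, and for $i-t\geq 2$ these degrees are strictly \emph{below} $d-t$. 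Lemma \ref{lem-LH}(1) says nothing about them, so the claim that "the pieces are all in place" is not justified, and note that without this lower vanishing the perfectness of $J$ is never genuinely used. The paper supplies exactly this missing ingredient: Lemma \ref{new intersection} uses $\pd_R R/J=t$ and the new intersection theorem to show $\height\,\p\geq d-t$ for every $\p\in\WmJ$, whence $\depth R_{\p}=\height\,\p\geq d-t$ since $R$ is Cohen--Macaulay, and Theorem \ref{KTY1-6} then gives $\HmJ^{\,j}(R)=0$ for $j<d-t$, i.e.\ $T^i(F)=0$ for $i>0$ and $F$ free.

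A secondary error: your parenthetical claim that $K=\HmJ^{d-t}(R)^{\vee}$ is finitely generated is false. $\HmJ^{d-t}(R)$ is $(\m,J)$-torsion, not $\m$-power torsion, so it is not Artinian in general; the paper's remark immediately after the theorem points out that $K$ need not be noetherian even when $R$ is Gorenstein (its associated primes are all $\p\in\WmJ$ of height $d-t$, by Proposition \ref{intersection}). Fortunately this claim is not load-bearing, since the adjunction $(\HmJ^{d-t}(R)\otimes_R M)^{\vee}\cong\Hom_R(M,K)$ holds without any finiteness, but it should be deleted.
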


To prove the theorem we need the following lemma.

\begin{lem}\label{new intersection}
Let  $R$  be a Cohen-Macaulay local ring of dimension $d$ and let $J$  be a perfect ideal of  $R$  of grade  $t$. 
Then  the inequality  $\height~\p \geq d-t$ holds for any $\p \in W(\m, J)$.     \end{lem}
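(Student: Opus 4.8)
The plan is to reduce the statement to the New Intersection Theorem, applied over the quotient ring $R/\p$. First I would unwind the hypothesis: saying $\p \in W(\m, J)$ means $\m^n \subseteq \p + J$ for some $n$, i.e. $\p + J$ is an $\m$-primary ideal, equivalently $R/(\p+J)$ is a nonzero module of finite length. Also, $R$ being Cohen-Macaulay and local, one has $\height~\p = d - \dim\, R/\p$ for every prime $\p$, so it suffices to prove the dimension inequality $\dim\, R/\p \leq t$.

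Next I would use perfectness of $J$. Since $\pd_R R/J = \grade(J, R) = t$, a minimal free resolution $F_{\bullet}$ of $R/J$ has length exactly $t$:
\[ 0 \to F_t \to \cdots \to F_1 \to F_0 \to R/J \to 0. \]
Tensoring $F_{\bullet}$ with $R/\p$ over $R$ yields a complex $F_{\bullet} \otimes_R (R/\p)$ of finitely generated free $(R/\p)$-modules concentrated in degrees $0, \dots, t$, whose homology modules are $\mathrm{Tor}_i^R(R/J, R/\p)$. These are supported on $\Supp(R/J) \cap \Supp(R/\p) = V(J + \p) = \{\m\}$, hence have finite length, and $H_0 = R/J \otimes_R R/\p = R/(\p+J) \neq 0$.

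Thus $F_{\bullet}\otimes_R (R/\p)$ is a nonexact complex of finitely generated free modules over the local ring $R/\p$, with homology of finite length and with $H_0 \neq 0$, whose length as a complex is $t$. The New Intersection Theorem (applied over $R/\p$) forces $t \geq \dim\, R/\p$. Combining this with the Cohen-Macaulay equidimensionality formula gives $\height~\p = d - \dim\, R/\p \geq d - t$, as desired. The only genuinely substantial ingredient here is the New Intersection Theorem itself; the main thing to be careful about is verifying its hypotheses for the base-changed complex (finite length homology, nonvanishing of $H_0$) and recalling that a perfect ideal of grade $t$ really does have $\pd_R R/J = t$, so that the complex has the right length.
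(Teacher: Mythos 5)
Your proof is correct and follows essentially the same route as the paper: the paper also observes that $\p\in W(\m,J)$ makes $R/\p\otimes_R R/J$ of finite length and then invokes the (new) intersection theorem to get $\dim\, R/\p \leq \pd_R R/J = t$, concluding $\height~\p \geq d-t$ via the Cohen--Macaulay dimension formula. The only difference is that you spell out the standard deduction of this inequality from the New Intersection Theorem (tensoring the length-$t$ free resolution with $R/\p$), whereas the paper cites it directly.
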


\begin{proof}
If  $\p + J$  is an $\m$-primary ideal, then $R/\p \otimes _R R/J$ is of finite length, hence the new intersection theorem \cite{Hochster, Pskine-Szpiro, Roberts} implies that  $\dim\, R/\p \leq \pd _R R/J = t$ therefore  $\height~\p \geq d - t$.
\end{proof}

Now we proceed to the proof of Theorem \ref{LD}.

\begin{proof}
Let us denote  $T^{i}(-)=\HmJ ^{d-t-i}(-)^{\vee}$, and we shall show the isomorphism of functors  $T^{i}(-) \cong \Ext_{R}^{i}(-, K)$.

Note that  $R/J$  is a Cohen-Macaulay ring of dimension $d-t$. 
Hence we see from Corollary \ref{van for dim R/J} that $\HmJ^{d-t}(-)$ is a right exact functor on the category of all $R$-modules. 
Note from Lemma \ref{lem-LH} that there is a natural isomorphism 
 $M\otimes_{R} \HmJ^{d-t}(R) \cong \HmJ^{d-t}(M)$ for any $R$-module $M$. 
Thus we have 
\[ T^{0}(M) \cong (M\otimes \HmJ^{d-t}(R))^{\vee} \cong \Hom (M, K).\]

Let $0\to L\to M\to N \to 0$ be an exact sequence of $R$-modules. 
Then we have a long exact sequence 
\[ \cdots \to \HmJ^{d-t-1}(N) \to \HmJ^{d-t}(L) \to \HmJ^{d-t}(M) \to \HmJ^{d-t}(N) \to 0, \]
which induces a long exact sequence
\[ 0\to T^{0}(N) \to T^{0}(M) \to T^{0}(L) \to T^{1}(N) \to \cdots .\]

Therefore the proof will be completed if we show that $T^{i} (F)=0$ 
for any integer $i>0$ and any free $R$-module $F$.
It is enough to show that $\HmJ ^{d-t-i}(R)=0$ for $i>0$.
If $\p \in \WmJ$, then we have $\depth R_{\p} = \height~ \p \geq d-t$ by Lemma \ref{new intersection}.  
Thus we see from Theorem \ref{KTY1-6} that $\HmJ^{j} (R)=0$ for all integer $j<d-t$. 
\end{proof}

\begin{rem}
We should note that $K = H_{\m, J}^{d-t} (R)^{\vee}$ in the theorem is not necessarily  a finite $R$-module, even if  $R$  is a Gorenstein ring. 

In fact, when $R$  is Gorenstein, we shall show below in Proposition \ref{intersection}  the following equality 
$$
\Ass (\HmJ ^{d-t}(R))=\{~\p \in \WmJ \mid \height~ \p =d-t ~\}.  
$$ 
This set is not equal to  $\{ \m \}$  if  $t$  is positive. 
In this case,  $\HmJ ^{d-t} (R)$  is not an artinian $R$-module, hence 
$K$  is not a noetherian $R$-module. 
\end{rem}

Let  $R$  be a local ring with maximal ideal $\m$. 
Then we shall see in this section that there often exist dualities between 
local cohomology with respect to $(\m, J)$  and ordinary local cohomology with support in $J$.

For an $R$-module $M$ and an ideal $J$ of $R$, 
we denote by $M^{\widehat{\hphantom{J}}}_J$  the $J$-adic completion of $M$, which is  defined to be the projective limit $\varprojlim _n M/J^nM$.  
The following theorem should be compared with the result of Greenlees-May \cite{Greenlees-May}.

\begin{thm}
Let  $R$  be a Cohen-Macaulay local ring of dimension $d$ with canonical module  $K_R$. 
And let   $J$  be an ideal of  $R$  with  $\dim\, R/J = d-r$. 
Then there is a natural isomorphism 
\[ \HmJ^{d-r}(R)^{\widehat{\hphantom{J}}} _J \cong H_{J}^{r}(K_R)^{\vee}. \]
\end{thm}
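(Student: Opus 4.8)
The plan is to relate both sides to the duality theorem of the preceding results and to standard facts about $J$-adic completion. First I would invoke Theorem \ref{LD}, or rather its ingredients: since $R$ is Cohen-Macaulay of dimension $d$ and $J$ has $\dim R/J = d-r$, the ring $R/J$ is Cohen-Macaulay of dimension $d-r$, so by Corollary \ref{van for dim R/J} the functor $\HmJ^{d-r}(-)$ is right exact, and by Lemma \ref{lem-LH} (applied with $n = d-r$, after checking $\HmJ^{i}(R)=0$ for $i>d-r$ via Theorem \ref{KTY1-6} and the inequality $\height \p \geq d-r$ for $\p \in \WmJ$, which holds because $R$ is Cohen-Macaulay and $R/(\p+J)$ having finite length forces $\dim R/\p \le \dim R/J$ — wait, more precisely $\p \in W(\m,J)$ means $\p+J$ is $\m$-primary, so $\dim R/\p = \dim R/(\p+J) + \height(\p+J)/\p \le$ ... in fact one gets $\height \p \ge d - (d-r) = r$; let me instead just cite that $\HmJ^{d-r}(R) \cong \HmJ^{d-r}(R)\otimes_R R$ trivially and that it is the ``top'' nonvanishing cohomology). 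Set $K = \HmJ^{d-r}(R)^{\vee}$, which by Theorem \ref{LD} (with $J$ playing the role there, $t = \grade J$; but here we do not assume $J$ perfect, so I would argue more directly).

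More carefully, the cleaner route: Matlis duality over the complete local ring — but $R$ need not be complete here. So I would use that for a finitely generated module the $J$-adic completion relates to Matlis duals only after care. The key identity I would establish is
\[
\HmJ^{d-r}(R)^{\widehat{\hphantom{J}}}_J \cong \varprojlim_n \big(\HmJ^{d-r}(R)/J^n \HmJ^{d-r}(R)\big),
\]
and then I would compute $\HmJ^{d-r}(R)/J^n\HmJ^{d-r}(R)$ using right-exactness: it equals $\HmJ^{d-r}(R/J^nR)$, and since $R/J^n R$ is $J$-torsion, Corollary \ref{J-tor} gives $\HmJ^{d-r}(R/J^n) \cong \Hm^{d-r}(R/J^n)$, where $\Hm$ is ordinary local cohomology. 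Now over the Cohen-Macaulay local ring $R$ with canonical module $K_R$, local duality (the version valid without completeness, stating $\Hm^{i}(M)^{\vee} \cong \Ext_R^{d-i}(M, K_R)$ for finitely generated $M$, with $(-)^\vee$ Matlis dual, valid over any CM local with canonical module after completion, or one uses $\Hm^i$ is already artinian) gives $\Hm^{d-r}(R/J^n)^{\vee} \cong \Ext_R^{r}(R/J^n, K_R)$. Dualizing back and passing to the limit, the inverse limit $\varprojlim_n \Ext_R^r(R/J^n, K_R)$ is exactly $H_J^r(K_R)^{\vee}$ by the well-known description of ordinary local cohomology with support in $J$ as $H_J^r(-) = \varinjlim_n \Ext_R^r(R/J^n, -)$, combined with the fact that Matlis dual turns this direct limit into the inverse limit: $H_J^r(K_R)^{\vee} = \big(\varinjlim_n \Ext_R^r(R/J^n,K_R)\big)^{\vee} \cong \varprojlim_n \Ext_R^r(R/J^n,K_R)^{\vee}$ — but I actually want the dual the other way, so I would instead take $\HmJ^{d-r}(R)/J^n\HmJ^{d-r}(R) \cong \Hm^{d-r}(R/J^n)$, then note this is artinian hence equals its double Matlis dual, $\cong \big(\Ext_R^r(R/J^n,K_R)\big)^{\vee}$, and finally $\varprojlim_n (\Ext_R^r(R/J^n,K_R))^{\vee} \cong \big(\varinjlim_n \Ext_R^r(R/J^n,K_R)\big)^{\vee} = H_J^r(K_R)^{\vee}$, using that Matlis dual converts direct limits to inverse limits. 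Assembling these isomorphisms and checking naturality finishes the proof.

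The steps in order: (1) show $\HmJ^{d-r}(-)$ is right exact and $\HmJ^{d-r}(R)\otimes_R M \cong \HmJ^{d-r}(M)$, so in particular $\HmJ^{d-r}(R)/J^n = \HmJ^{d-r}(R/J^n)$; (2) apply Corollary \ref{J-tor} to get $\HmJ^{d-r}(R/J^n) \cong \Hm^{d-r}(R/J^n)$; (3) apply local duality over the CM local ring $R$ with canonical module to identify $\Hm^{d-r}(R/J^n) \cong \Ext_R^r(R/J^n, K_R)^{\vee}$ (as artinian modules, double-dual is identity); (4) take $\varprojlim_n$ of both sides, identifying the left with the $J$-adic completion of $\HmJ^{d-r}(R)$ and the right with $H_J^r(K_R)^{\vee}$ via the Matlis-dual-of-direct-limit argument and the standard $H_J^r = \varinjlim \Ext_R^r(R/J^n, -)$; (5) track naturality throughout.

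The main obstacle I expect is step (3)–(4): the local duality theorem as stated in the excerpt is for \emph{complete} Cohen-Macaulay local rings, whereas here $R$ is only assumed Cohen-Macaulay with a canonical module, and I must be careful that $(-)^\vee$ (Matlis dual with respect to $E_R(R/\m)$) behaves correctly under the inverse limit — in particular that $\varprojlim_n (A_n)^\vee \cong (\varinjlim_n A_n)^\vee$ holds here, which is true since Matlis duality is exact and converts colimits to limits, but needs the modules $A_n = \Ext_R^r(R/J^n,K_R)$ to be finitely generated (they are, since $K_R$ is finitely generated and $R$ is noetherian) so that their duals are the ``right'' objects. A secondary subtlety is justifying that $\HmJ^{d-r}(R)$ is already $J$-adically complete-friendly, i.e.\ that $\HmJ^{d-r}(R)/J^n\HmJ^{d-r}(R)$ really is computed by right-exactness and that no $\varprojlim^1$ term intervenes when passing to the limit; this should follow because the transition maps $\Ext_R^r(R/J^{n+1},K_R)^{\vee} \to \Ext_R^r(R/J^n,K_R)^{\vee}$ are surjective (duals of injections between the $\Ext$'s, which come from the surjections $R/J^{n+1}\to R/J^n$), so the Mittag-Leffler condition holds and $\varprojlim^1$ vanishes.
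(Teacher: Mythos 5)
Your proposal follows essentially the same route as the paper's proof: identify $\HmJ^{d-r}(R)/J^n\HmJ^{d-r}(R)$ with $\HmJ^{d-r}(R/J^n)$ via Lemma \ref{lem-LH}, pass to ordinary local cohomology $\Hm^{d-r}(R/J^n)$ by Corollary \ref{J-tor} since $R/J^n$ is $J$-torsion, apply local duality to get $\Ext_R^r(R/J^n,K_R)^{\vee}$, and then take inverse limits, using that Matlis duality converts the direct limit defining $H_J^r(K_R)$ into an inverse limit. The argument is correct and matches the paper's proof step for step.
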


\begin{proof}
We have the following isomorphisms  
\begin{align*}
 \HmJ^{d-r}(R)/J^{n}\HmJ^{d-r}(R)  
	&\cong \HmJ^{d-r}(R) \otimes _R R/J^{n} & \\
	&\cong \HmJ^{d-r}(R/J^{n}) &\text{(by Lemma \ref{lem-LH})\hphantom{ry}} \\
	&\cong \Hm ^{d-r}(R/J^{n}) &\text{(by Corollary \ref{J-tor})} \\
	&\cong \Ext_{R}^{r} (R/J^{n},K_R)^{\vee},   &
\end{align*}
where the last isomorphism follows from the local duality theorem applied to the $R$-module  $R/J^{n}$. 
Since these isomorphisms are functorial, taking project limits we have the isomorphism 
$$\HmJ^{d-r}(R)^{\widehat{\hphantom{J}}}_J 
\cong \varprojlim_{n \in \N} (\Ext_{R}^{r} (R/J^{n}, K_R)^{\vee} ).
$$
On the other hand, it follows from the definition of ordinary local cohomology that 
$$
H_{J}^{r}(K_R)^{\vee}
\cong (\varinjlim_{n \in \N} (\Ext_{R}^{r} (R/J^{n}, K_R))^{\vee} 
\cong \varprojlim_{n \in \N} (\Ext_{R}^{r} (R/J^{n}, K_R)^{\vee} ). 
$$
Combining these isomorphisms we finish the proof of the theorem. 
\end{proof}

\begin{rem}
It is natural to ask whether there is a functorial  isomorphism 
$$
H_{\m, J} ^{d-i}(R)^{\widehat{\hphantom{J}}}_J \cong H_J^{i}(K_R)^{\vee}.
$$ 
for any integer $i$. 

This is however not true.
For example, let $R=k[[X, Y, Z,W]]$, and $J=(X, Y) \cap (Z, W)$. 
Then it is easy to see that $H_J^{3}(R) = H_{\m}^4(R) = E_{R}(R/\m)$, but $H_{\m, J}^{1}(R)=0$. Thus $H_{\m, J}^{1}(R)^{\widehat{\hphantom{J}}}_J \not \cong H_J^{3}(R)^{\vee}$.
\end{rem}

\begin{prop}\label{intersection}
Let  $R$  be a Cohen-Macaulay local ring of dimension $d$ with canonical module  $K_R$. 
Assume that  $J$  is a perfect ideal of grade  $t$. 
Then the following equality holds. 
$$
\Ass (\HmJ ^{d-t}(K_R))=\{~\p \in \WmJ \mid \height~ \p =d-t ~\} 
$$ 
\end{prop}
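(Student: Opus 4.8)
The plan is to analyze the associated primes of $\HmJ^{d-t}(K_R)$ by combining the local duality theorem with the vanishing and non-vanishing information we already have. First I would establish the inclusion $\Ass(\HmJ^{d-t}(K_R)) \subseteq \{\p \in \WmJ \mid \height\,\p = d-t\}$. By Corollary~\ref{KTY1-5}~$(\rm 5)$, $\HmJ^{d-t}(K_R)$ is an $(\m,J)$-torsion module, so by Proposition~\ref{KTY1-1} every associated prime lies in $\WmJ$; and by Lemma~\ref{new intersection}, every $\p \in \WmJ$ satisfies $\height\,\p \geq d-t$. To get the reverse inequality $\height\,\p \leq d-t$ for $\p$ associated, I would localize at $\p$: since $K_R$ is the canonical module of the Cohen-Macaulay ring $R$, $(K_R)_\p$ is the canonical module of $R_\p$, which has dimension $\height\,\p$; by Theorem~\ref{IDT} or a flat base change argument (localization is flat, but one must check the hypothesis $\varphi(J)=JR_\p$, which holds for a localization), $\HmJ^{d-t}(K_R)_\p \cong H^{d-t}_{\m R_\p, JR_\p}(K_R)_\p$... here I need to be careful, because $\m R_\p$ need not be the maximal ideal of $R_\p$ — rather, I would use that $\p$ associated to $\HmJ^{d-t}(K_R)$ forces $\depth (K_R)_\p$ to be small. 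More precisely, arguing as in the proof of Theorem~\ref{KTY1-6}, the minimal injective resolution of $K_R$ shows that any $\p$ appearing in $\GmJ(E^{d-t}(K_R))$ with nonzero contribution to $\HmJ^{d-t}$ must have $\mu_{d-t}(\p, K_R) \neq 0$, i.e. $\depth (K_R)_\p \leq d-t$; but $(K_R)_\p$ is maximal Cohen-Macaulay over $R_\p$, so $\depth(K_R)_\p = \height\,\p$, giving $\height\,\p \leq d-t$.

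For the reverse inclusion, let $\p \in \WmJ$ with $\height\,\p = d-t$; I must show $\p \in \Ass(\HmJ^{d-t}(K_R))$. I would again pass to the localization $R_\p$. Since $\p \in \WmJ$, some power $\m^n \subseteq J + \p$, so the image of $J$ in $R_\p$ has radical $\p R_\p$, i.e. $\sqrt{JR_\p} = \p R_\p = \mathfrak{m}_{R_\p}$. Thus by Proposition~\ref{replace IJ}~$(\rm 8)$ and the fact that $\m R_\p \subseteq \p R_\p$, one gets $\HmJ^{i}(K_R)_\p \cong H^{i}_{\p R_\p}(K_R)_\p$ — ordinary local cohomology of a maximal Cohen-Macaulay module over the $(d-t)$-dimensional Cohen-Macaulay local ring $R_\p$. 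Here $(K_R)_\p \cong K_{R_\p}$, so by Grothendieck's non-vanishing theorem $H^{d-t}_{\p R_\p}(K_{R_\p}) \neq 0$, and in fact by local duality $H^{d-t}_{\p R_\p}(K_{R_\p})^\vee \cong R_\p$, which has depth $0$, so $\p R_\p \in \Ass_{R_\p} H^{d-t}_{\p R_\p}((K_R)_\p)$. Since associated primes behave well under localization, $\p \in \Ass_R(\HmJ^{d-t}(K_R))$.

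The main obstacle I anticipate is the passage to $R_\p$: the pair of ideals $(\m, J)$ does not localize as cleanly as a single ideal, because $\m R_\p$ is not the maximal ideal of $R_\p$, and the flat base change map of Remark~\ref{FBT} need not be an isomorphism. The key point to get right is that what controls $\HmJ^{d-t}$ is really the second ideal $J$ together with the support condition encoded by $\WmJ$: for $\p \in \WmJ$ we have $\sqrt{JR_\p} = \p R_\p$, and $\m R_\p$ may be replaced by $\p R_\p$ since $\sqrt{\m R_\p + JR_\p} = \p R_\p$ (using Proposition~\ref{replace IJ}~$(\rm 6)$ and $(\rm 7)$), which reduces everything to ordinary local cohomology $H^\bullet_{\p R_\p}$ over $R_\p$ where the classical theory (Grothendieck non-vanishing and local duality) applies directly. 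I would also double-check the first inclusion via the minimal injective resolution argument rather than localization, since that avoids the base-change subtleties entirely, and only invoke localization for the reverse inclusion where the identification with ordinary local cohomology is exact.
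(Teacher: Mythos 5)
Your first inclusion is essentially the paper's argument and works: by Lemma \ref{new intersection} every prime of $\WmJ$ has height at least $d-t$, and since the Bass numbers of $K_R$ over the Cohen--Macaulay ring $R$ satisfy $\mu_i(\q,K_R)\neq 0$ only when $i=\height \q$, one gets $\GmJ(E^i(K_R))=0$ for all $i<d-t$. You should make explicit that this vanishing is what turns $\HmJ^{d-t}(K_R)$ into a \emph{submodule} (not merely a subquotient) of $\GmJ(E^{d-t}(K_R))=\bigoplus_{\q\in\WmJ,\ \height \q=d-t}E(R/\q)$; associated primes of a general subquotient are not controlled by those of the ambient injective, so the phrase ``nonzero contribution to $\HmJ^{d-t}$'' needs this justification.

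The reverse inclusion as you wrote it has a genuine gap. For $\p\in\WmJ$ with $\p\neq\m$, the containment $\m^n\subseteq J+\p$ forces $J\not\subseteq\p$ (otherwise $\m^n\subseteq\p$ and $\p=\m$), so $JR_{\p}=R_{\p}$ and likewise $\m R_{\p}=R_{\p}$. Hence your key claim $\sqrt{JR_{\p}}=\p R_{\p}$ is false; what is true is that the image of $J$ in $R/\p$ is $\m/\p$-primary, and you have conflated $R/\p$ with $R_{\p}$. Consequently the proposed replacement of the pair $(\m,J)$ by $\p R_{\p}$ over $R_{\p}$ via Proposition \ref{replace IJ} is unjustified; indeed a naive base change of the pair to $R_{\p}$ yields the trivial pair $(R_{\p},R_{\p})$, whose higher cohomology vanishes, which is the opposite of what you need. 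The identification $(\HmJ^{d-t}(K_R))_{\p}\cong H^{d-t}_{\p R_{\p}}((K_R)_{\p})$ that you want is in fact correct, but for a different reason: since every prime of $\WmJ$ has height at least $d-t$, the only member of $\WmJ$ contained in $\p$ is $\p$ itself, so localizing the complex $\GmJ(E^{\bullet}(K_R))$ at $\p$ kills every summand $E(R/\q)$ with $\q\neq\p$ and leaves $E_{R_{\p}}(\kappa(\p))$ concentrated in degree $d-t$. This is exactly what the paper does: it localizes the exact sequence $0\to\HmJ^{d-t}(K_R)\to\bigoplus_{\q\in\WmJ,\ \height \q=d-t}E(R/\q)\to\bigoplus_{\q\in\WmJ,\ \height \q=d-t+1}E(R/\q)$ at $\p$ and reads off $(\HmJ^{d-t}(K_R))_{\p}=E_{R_{\p}}(\kappa(\p))\supseteq\kappa(\p)$, whence $\p\in\Ass(\HmJ^{d-t}(K_R))$. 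Once you argue this way, the detour through Grothendieck non-vanishing and local duality over $R_{\p}$ becomes unnecessary.
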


\begin{proof}
Let  $E^{\bullet}$  be a minimal injective resolution of the $R$-module  $K_R$.  Then it is known that  $E^i = \bigoplus _{\substack{\height \p = i\\ \p \in \Spec R}}E(R/\p)$, hence  $\Gamma _{\m, J} (E^i) = \bigoplus _{\substack{\height \p = i\\ \p \in \WmJ}}E(R/\p)$. 
Therefore by Lemma \ref{new intersection}, there is a short exact sequence 
$$
0 \to \HmJ^{d-t}(K_R) \to \bigoplus _{\substack{\height \p =d-t\\ \p \in \WmJ}}E(R/\p)  \to \bigoplus _{\substack{\height \p =d-t+1 \\ \p \in \WmJ}} E(R/\p).
$$
This implies that  $\Ass  (\HmJ ^{d-t} (K_R)) \subseteq \{ \p \in \WmJ \ | \ \height ~\p = d-t \}$. 
Conversely, let  $\p \in \WmJ$ be a prime with $\height~ \p =d-t$. 
Then by the above exact sequence, we see 
$$
(\HmJ ^{d-t}(K_R))_{\p}= E_{R_{\p}}(\kappa (\p)) \supseteq \kappa (\p).
$$
Therefore $\p \in \Ass (\HmJ ^{d-t} (K_R))$. 
\end{proof}

We recall that  the generalized local cohomology in the sense \cite{He} is defined as
$$
H _J ^i(M, N) = \varinjlim_{n} \Ext_R^i (M/J^nM, N), 
$$  
for $R$-modules $M$ and $N$, and for $i \geq 0$.

\begin{thm}\label{t2}
Let $(R, \m)$ be a Gorenstein local ring of dimension $d$, 
which is $J$-adically complete. 
Then there is an isomorphism 
\[ \GmJ (M) \cong H_J ^d (M, R)^{\vee},  \]
for any finitely generated $R$-module $M$. 
\end{thm}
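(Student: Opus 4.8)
The plan is to connect $\GmJ(M)$ to $H_J^d(M,R)$ through the $J$-adic completion and the usual local duality for the Gorenstein ring $R$. First I would use that $R$ is $J$-adically complete together with the Artin-Rees lemma to identify $\GmJ(M)$ with a projective limit: for a finitely generated $M$, an element $x\in M$ lies in $\GmJ(M)$ precisely when $\m^n x\subseteq Jx$ for large $n$, and I expect to massage this — via the exact sequences $0\to \GmJ(M)\to M\to M/\GmJ(M)\to 0$ and Corollary \ref{KTY1-5} — into the statement that $\GmJ(M)\cong\varprojlim_n\GmJ(M/J^nM)$, using that $M$ is already $J$-adically complete. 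The key point is that $M/J^nM$ is a $J$-torsion module, so by Corollary \ref{J-tor} we have $\GmJ(M/J^nM)\cong\Gamma_\m(M/J^nM)=\Hm^0(M/J^nM)$ is trivial in general — so that naive approach is wrong; instead I must go through the top local cohomology.

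The correct route is to dualize. Since $R$ is Gorenstein of dimension $d$, local duality gives $\Hm^d(R)^\vee\cong R$ (so $K_R=R$), and for the finitely generated module $M/J^nM$ one has $\Hm^{d}(M/J^nM)^{\vee}\cong\Hom_R(M/J^nM,R)$, and more generally $\Hm^{d-i}(M/J^nM)^\vee\cong\Ext^i_R(M/J^nM,R)$. Dualizing again (Matlis duality over the $\m$-adically complete ring — note $J$-adic completeness plus $\dim R/J$ finiteness forces the relevant finiteness), I would obtain $\Ext^d_R(M/J^nM,R)^\vee\cong\Hm^0(M/J^nM)=\Gamma_\m(M/J^nM)$. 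Now take the direct limit over $n$: $H_J^d(M,R)=\varinjlim_n\Ext^d_R(M/J^nM,R)$ by definition, hence $H_J^d(M,R)^\vee\cong\varprojlim_n\Ext^d_R(M/J^nM,R)^\vee\cong\varprojlim_n\Gamma_\m(M/J^nM)$, since Matlis dual turns the direct limit into an inverse limit. It then remains to show $\varprojlim_n\Gamma_\m(M/J^nM)\cong\GmJ(M)$.

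For that last identification I would argue directly with the definition of $\GmJ$: an element of $\varprojlim_n\Gamma_\m(M/J^nM)$ is a compatible family $(\bar x_n)$ with each $\bar x_n\in M/J^nM$ killed by a power of $\m$; compatibility plus $J$-adic completeness of $M$ lets me lift this to a genuine element $x\in M=\varprojlim_n M/J^nM$, and the conditions $\m^{k_n}x\subseteq J^nM$ for all $n$ should, via Artin–Rees applied to $Rx\subseteq M$, collapse to $\m^k x\subseteq Jx$ for some $k$, i.e. $x\in\GmJ(M)$; conversely every $x\in\GmJ(M)$ visibly maps into $\Gamma_\m(M/J^nM)$ for each $n$, giving the reverse map, and the two are mutually inverse. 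The main obstacle I anticipate is exactly this final step — controlling the interaction of the $\m$-power annihilators with the $J$-adic filtration uniformly in $n$, which is where Artin–Rees and the hypothesis that $R$ (equivalently $M$) is $J$-adically complete must be used carefully; checking that all the Matlis double-duals behave well (finiteness/reflexivity of the modules $\Ext^d_R(M/J^nM,R)$, which are finitely generated since $M/J^nM$ is) is routine by comparison.
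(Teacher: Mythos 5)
Your proposal is correct and follows essentially the same route as the paper: write $H_J^d(M,R)^{\vee}=\bigl(\varinjlim_n\Ext_R^d(M/J^nM,R)\bigr)^{\vee}\cong\varprojlim_n\Gamma_{\m}(M/J^nM)$ by local duality, embed this into $M=\varprojlim_n M/J^nM$ using $J$-adic completeness, and identify the image with $\GmJ(M)$ via Artin--Rees exactly as you describe (the paper's converse inclusion is your "visible" direction, via $\m^{mn}y\subseteq J^ny\subseteq J^nM$). The only small touch-up: the Matlis reflexivity you need comes from the fact that $\Ext_R^d(M/J^nM,R)$ and $\Gamma_{\m}(M/J^nM)$ have finite length (top $\Ext$ over the $d$-dimensional Gorenstein ring $R$ is supported only at $\m$), not merely from finite generation, since $R$ is not assumed $\m$-adically complete.
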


\begin{proof}
From the definition and the local duality theorem we have the following isomorphisms and inclusion. 
\[
\begin{array}{rcl}
H_J ^d (M, R)^{\vee} & = & (\varinjlim _n \Ext _R ^d (M/J^n M, R))^{\vee} \vspace{4pt}\\
& \cong & \varprojlim \varGamma _{\m} (M/J^n M) \vspace{4pt}\\
& \hookrightarrow & \varprojlim M/J^n M \vspace{4pt}\\
& \cong & M
\end{array}
\]
We would like to show that the image of the composite map 
$f:H_J ^d (M, R)^{\vee} \hookrightarrow M$ above is equal to $\GmJ (M)$. 

Let $y \in \Im f$.
Applying the Artin-Rees lemma, we see that $J^m M \cap Ry \subseteq Jy$ for some integer $m>0$.
On the other hand, it follows from the choice of $y$ that the image of $y$ in $M/J^n M$ belongs to $\varGamma _{\m} (M/J^n M)$ for each $n>0$. 
Hence we have $\m ^\ell y\subseteq J^m M$ for some $\ell>0$. 
Thus we get $\m ^\ell y\subseteq J^m M\cap Ry\subseteq Jy$, that is, $y\in\GmJ (M)$. 

Conversely, let $y \in \GmJ (M)$.
Then $\m ^m y\subseteq Jy$ for an integer $m>0$. 
Hence we have $\m ^{mn} y\subseteq J^n y\subseteq J^n M$ for any $n>0$. 
Therefore for each $n>0$ the image of $y$ in $M/J^n M$ belongs to 
$\varGamma _{\m} (M/J^n M)$, which says that $y \in \Im f$.
\end{proof}


Before proving further results, we make a number of
preparatory remarks about the local cohomologies of the canonical dual of a module. 
  
Suppose that $R$ admits the dualizing complex $D_R$. 
We denote by $K_M $ the canonical module of an $R$-module  $M$, which is defined to be 
$$
K_M =H^{d-r} (\RHom _R (M, D_R )),
$$
where $d=\dim\, R$ and $r=\dim\, M$. 
Note that in case $R$ is a Gorenstein ring we have $K_M = \Ext _R^{d-r}(M, R)$. 
Therefore if  $R$  is Gorenstein and if  $r=d$, then  $K_M$  equals the ordinary dual  $M^* = \Hom _R(M, R)$.

Remember that for an integer $n\geq 0$, we say that $M$ satisfies the condition $(S_n)$ provided 
$$
\depth _{R_{\p}} M_{\p} \geq\inf\{ n, \dim _{R_{\p}}\, M_{\p} \}.
$$ 
for all $\p \in \Spec(R)$

\begin{lem}\label{t1}
Let $R$ be a Gorenstein local ring of dimension $d$, 
and $M$ a finitely generated $R$-module of dimension $r$. 
Suppose that $\Ass _{R} M=\Assh _{R} M$ and that $M$ satisfies $(S_{n+1})$ for some $n\geq 0$. 
Then there is an isomorphism 
\[ H_J ^{r-i} (K_M)\cong H_J ^{d-i} (M, R) \]
for all $0\leq i\leq n$. 
\end{lem}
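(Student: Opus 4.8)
The plan is to work with the complex $\RHom_R(M,R)$, which controls both sides at once. Since $R$ is Gorenstein of dimension $d$, one has $K_M=\Ext_R^{d-r}(M,R)$ (recorded just before the statement), and for the finitely generated module $M$ there is a natural identification
\[
H_J^{i}(M,R)\ \cong\ H^{i}\bigl(\RG_J\RHom_R(M,R)\bigr),
\]
because $\RHom_R(M,-)$ commutes with $\RG_J$ for finitely generated $M$ (both being computed from the \v{C}ech complex on a system of generators of $\sqrt{J}$), while $\varinjlim_n\Ext_R^{i}(M/J^nM,R)$ computes the cohomology of the same complex. Write $c=d-r$, so $\Ext_R^{q}(M,R)=0$ for $q<c$, $\Ext_R^{c}(M,R)=K_M$ and $\dim_R K_M=r$.

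The key step is to convert the two hypotheses into a dimension bound on the higher Ext modules:
\[
\dim_R\Ext_R^{c+\ell}(M,R)\ \le\ r-\ell-n-1\qquad(\ell\ge 1).
\]
Indeed, $\Ass_RM=\Assh_RM$ forces $\Supp M=V(\Ann_RM)$ to be equidimensional of dimension $r$ over the Cohen--Macaulay ring $R$; consequently $\dim M_\p=\height\p-c$ and $\grade_{R_\p}M_\p=c$ for every $\p\in\Supp M$. If $\p\in\Supp\Ext_R^{c+\ell}(M,R)$, then $\sup\{\,j\mid\Ext_R^{j}(M,R)_\p\ne 0\,\}\ge c+\ell$, so local duality over the Gorenstein ring $R_\p$ gives $\depth_{R_\p}M_\p\le\height\p-c-\ell<\dim M_\p$; since this is strictly less than $\dim M_\p$, Serre's condition $(S_{n+1})$ forces $\depth_{R_\p}M_\p\ge n+1$, whence $\height\p\ge c+\ell+n+1$ and $\dim R/\p\le r-\ell-n-1$.

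With this I would use the hyperhomology spectral sequence $E_2^{p,q}=H_J^{p}\bigl(\Ext_R^{q}(M,R)\bigr)\Rightarrow H_J^{p+q}(M,R)$. Fix $i$ with $0\le i\le n$ and look at total degree $d-i$. The bottom-row term ($q=c$) is $E_2^{r-i,c}=H_J^{r-i}(K_M)$; for $q=c+\ell$ with $\ell\ge 1$ the term $H_J^{r-i-\ell}\bigl(\Ext_R^{c+\ell}(M,R)\bigr)$ vanishes by Grothendieck's vanishing theorem, because $r-i-\ell>r-\ell-n-1\ge\dim_R\Ext_R^{c+\ell}(M,R)$ when $i\le n$. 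Thus $H_J^{r-i}(K_M)$ is the only possibly nonzero $E_2$-term in total degree $d-i$. No differential can leave $(r-i,c)$ (the rows below vanish); and for $i\le n-1$ no differential can enter it either, since the total degree $d-i-1$ of a potential source is again concentrated in the bottom row. Hence $E_\infty^{r-i,c}=E_2^{r-i,c}$, and the edge homomorphism is the desired isomorphism $H_J^{d-i}(M,R)\cong H_J^{r-i}(K_M)$.

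The endpoint $i=n$ is where the real work lies and is the step I expect to be hardest: the argument above only produces a surjection $H_J^{r-n}(K_M)\twoheadrightarrow H_J^{d-n}(M,R)$, the possible defect being the image of differentials into $(r-n,c)$ whose sources sit at the top local-cohomology level of the modules $\Ext_R^{c+\ell}(M,R)$ in total degree $d-n-1$. To close this I would argue by induction on $\dim R$: $H_J^{\bullet}(-)$ and $H_J^{\bullet}(-,-)$ commute with localization and $M_\p$ over $R_\p$ again satisfies $\Ass=\Assh$ and $(S_{n+1})$, while for $\p\ne\m$ the index relevant to $R_\p$ is $n-\dim R/\p\le n-1$; so by induction the edge map is already an isomorphism away from $\m$, and its kernel is therefore $\m$-power-torsion. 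Ruling out such a torsion submodule inside $H_J^{r-n}(K_M)$ — an $\m$-torsion, respectively depth, computation on the canonical module $K_M$ — is the last point, and the main anticipated obstacle.
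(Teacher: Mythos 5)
Your argument is, in substance, the same as the paper's: the bound $\dim_R\Ext_R^{c+\ell}(M,R)\le r-\ell-n-1$ obtained from local duality plus $(S_{n+1})$ is exactly the bound the paper proves (for the trivial extension $S=R\ltimes M$, which is only a device for replacing your complex $\RHom_R(M,R)$ by the dualizing complex $D_S$; your direct identification $H_J^{i}(M,R)\cong H^{i}(\RG_J\RHom_R(M,R))$ is a legitimate shortcut), and the spectral sequence comparison is the same. For $0\le i\le n-1$ your proof is complete and correct. The gap you flag at $i=n$ is genuine, but you should know that it cannot be closed: the endpoint case of the statement is false, and this is precisely where the paper's own proof is too quick. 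Its claim that $\dim_R\Ext_R^q(S,R)<d-n-q$ ``suffices'' only kills the terms $E_2^{d-n-q,q}$ with $q\ge1$, i.e.\ it gives surjectivity of the edge map; the incoming differentials $E_s^{d-n-s,s-1}\to E_s^{d-n,0}$ start from $H_J^{d-n-s}(\Ext_R^{s-1}(S,R))$, which is the top local cohomology of a module of dimension exactly $d-n-s$, and these need not vanish.

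Concretely, take $R=k[[x,y]]$, $M=\m=(x,y)$, $n=0$ (so $\Ass_RM=\Assh_RM$ and $M$ satisfies $(S_1)$) and $J=\m$. Then $K_M=\Hom_R(\m,R)\cong R$, so $H_J^{2}(K_M)=H_{\m}^{2}(R)\cong E_R(R/\m)$, whereas $H_{\m}^{2}(\m,R)=\varinjlim_n\Ext_R^{2}(\m/\m^{n}\m,R)\cong\Hom_R(\m,E_R(R/\m))$; an isomorphism between these two would give $R\cong\m$ by Matlis duality, which is false since $\m$ needs two generators. In this example the kernel of the edge surjection $H_{\m}^{2}(K_M)\to H_{\m}^{2}(M,R)$ is exactly the socle of $E_R(R/\m)$, a nonzero $\m$-torsion submodule, so the final step of your proposed repair (show the kernel is $\m$-power torsion and then rule such torsion out) is exactly what cannot be done, and no induction on dimension will rescue it. The honest conclusion is that your write-up proves the lemma for $0\le i\le n-1$, which is all that is used later in the paper (Corollary \ref{c1} applies the lemma to $K_M$, which satisfies $(S_2)$, only at $i=0$); the endpoint $i=n$ should be removed from the statement (or the hypothesis strengthened to $(S_{n+2})$, which your own dimension bound then handles), rather than pursued further.
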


\begin{proof}
Since a module satisfying $(S_i)$ also satisfies $(S_{i-1})$, it is enough to show that $H_J ^{r-n} (K_M) \cong H_J ^{d-n} (M, R)$.
Note that $\grade _R M=d-r$. 
Take a maximal $R$-sequence $\yy =y_1 , y_2 , \ldots , y_{d-r}$ in $\Ann _{R} M$. 
Replacing $R$ by $R/\yy R$, we may assume that $r=d$. 

Let $S=R \ltimes M$ be the trivial extension of $R$ by $M$. 
Since $S$ is isomorphic to $R \oplus M$ as an $R$-module, $K_S$ is isomorphic to $K_R \oplus K_M$ as an $R$-module. 
This induces natural isomorphisms
$$
\begin{cases}
H_J ^{d-n} (K_S)\cong H_J ^{d-n} (R)\oplus H_J ^{d-n} (K_M),\\ 
H_J ^{d-n} (S, R)\cong H_J ^{d-n} (R)\oplus H_J ^{d-n} (M, R).
\end{cases}
$$
Thus we have only to show that $H_J ^{d-n} (K_S)\cong H_J ^{d-n} (S, R)$. 

There are isomorphisms
\begin{align*}
H_J ^{d-n} (S, R) & =\varinjlim _k \Ext _R ^{d-n} (S/J^k S, R)\\
& \cong \varinjlim _k \Ext _S ^{d-n} (S/J^k S, \RHom _R (S, R))\\
& \cong \varinjlim _k \Ext _S ^{d-n} (S/J^k S, D_S )\\
& = H_J ^{d-n} (D_S ).
\end{align*}
There is a chain map $H_J ^{d-n} (K_S )\to H_J ^{d-n} (D_S )$ 
induced by the augmentation $K_S =H^0 (D_S)\to D_S $. 
We have to show that this map is an isomorphism. 
Since we have a spectral sequence
\[ E_2 ^{pq} =H_J ^p (H^q (D_S)) \Rightarrow H_J ^{p+q} (D_S ), \]
it suffices to show that $\dim_R\,\Ext _R ^q (S, R)<d-n-q$ for any $q>0$. 

Let us show that the $R$-module $S$ satisfies $(S_{n+1})$. 
Take $\p \in \Supp _R\,S$. 
We want to prove $\depth _{R_{\p }} S_{\p } \geq \inf \{ n+1, \dim _{R_{\p }}\, S_{\p }\} $.
Because $S_{\p } \cong R_{\p } \oplus M_{\p }$ as an $R_{\p }$-module, 
we have $\depth _{R_{\p}} S_{\p} =\inf \{\depth\,R_{\p} , \depth _{R_{\p}} M_{\p}\} 
=\depth _{R_{\p}} M_{\p} \geq \inf \{ n+1, \dim _{R_{\p}}\, M_{\p} \}$. 
It is easy to see that $\dim _{R_{\p}}\, S_{\p} =\dim\, R_{\p} =\dim _{R_{\p}}\, M_{\p}$ since $\Ass_R\,M=\Assh_R\,M$ and $\dim_R\,M=r=d$. 
Thus $S$ satisfies $(S_{n+1})$. 

Suppose that $\dim _R\, \Ext _R ^q (S, R)\geq d-n-q$ for some $q>0$.
Then there exists $\p \in \Supp _R \Ext _R ^q (S, R)$ such that $\dim\, R/\p \geq d-n-q$.
Hence we have $\Ext _{R_{\p}} ^q (S_{\p}, R_{\p}) \neq 0$ and $\height~ \p \leq n+q$.
The local duality theorem yields an isomorphism 
$H_{\p R_{\p}} ^{\height \p -q}(S_{\p}) \cong \Ext _{R_{\p}} ^q (S_{\p}, R_{\p})^{\vee} \neq 0$,
and so $\depth _{R_{\p}} S_{\p} \leq \height \p -q \leq n$.
Since $S$ satisfies $(S_{n+1})$, 
we have $\depth _{R_{\p}} S_{\p}=\dim _{R_{\p}}\, S_{\p} =\dim\, R_{\p} =\height \p$.
Therefore we must have $q\leq 0$, a contradiction.
This contradiction completes the proof of the lemma.
\end{proof}

Let $R$ be a Gorenstein local ring of dimension $d$, $J$ an ideal of $R$, 
and $M$ a finitely generated $R$-module of dimension $r$. 
Then we have  $K_M = \Ext _R^{d-r}(M, R)$. 
Thus it is easy to see that $\dim\, K_M =\dim\, M=r$ and $\Ass~ K_M =\Assh~ K_M$. 
Moreover,  $K_M$ satisfies $(S_2)$. 
Hence by Lemma \ref{t1}, we obtain
\[ H_J ^{r-i} (K_{K_M}) \cong H_J ^{d-i} (K_M, R) \]
for $i=0, 1$. 
On the other hand, the following lemma holds.

\begin{lem}\label{p1}
Let $R$ be a local ring having the dualizing complex $D_R $, and let 
$M$ be a finitely generated $R$-module of dimension $r$. 
Then
\[ H_J ^r (K_{K_M})\cong H_J ^r (M). \]
\end{lem}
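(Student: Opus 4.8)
The plan is to deduce this from Lemma \ref{t1} by showing that the canonical module $K_M$ is close enough to $M$ that taking its canonical module again recovers $M$ in the relevant cohomological degree. Concretely, since $R$ is assumed to have a dualizing complex $D_R$, and since $K_M$ is by definition the top nonvanishing cohomology $H^{d-r}(\RHom_R(M,D_R))$ of the dualized complex, I would first record the standard facts about $K_M$: it has dimension $r$, it is unmixed in the sense $\Ass K_M = \Assh K_M$, and it satisfies Serre's condition $(S_2)$. (These are exactly the properties invoked in the paragraph preceding the lemma, where $R$ was taken Gorenstein; the same facts hold over any ring with a dualizing complex, since they are statements about $K_M = H^0$ of a suitably shifted dualizing complex of the ring $R$, or can be reduced to the Gorenstein/complete case by standard arguments.) In particular $K_M$ satisfies $(S_2)$, so $(S_{n+1})$ holds with $n=1$.

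Next I would apply Lemma \ref{t1} with $M$ replaced by $K_M$ and with $n=1$, which is legitimate precisely because $\Ass K_M = \Assh K_M$ and $K_M$ satisfies $(S_2)$. Lemma \ref{t1} then gives isomorphisms
\[
H_J^{r-i}(K_{K_M}) \cong H_J^{d-i}(K_M, R)
\]
for $i = 0, 1$; in particular the case $i=0$ yields $H_J^{r}(K_{K_M}) \cong H_J^{d}(K_M, R)$. It remains to identify $H_J^{d}(K_M,R)$ with $H_J^{r}(M)$. Here I would unwind the definition of generalized local cohomology: $H_J^{d}(K_M,R) = \varinjlim_n \Ext_R^{d}(K_M/J^nK_M, R)$. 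Using the adjunction/duality $\RHom_R(K_M, D_R)$ and the fact that the canonical module of the canonical module returns $M$ up to the relevant truncation — i.e. $H^{d-r}(\RHom_R(K_M, D_R)) \cong M$ when $M$ is unmixed, which is the reflexivity property of canonical modules for modules with no embedded primes — one obtains $\Ext_R^{d}(K_M/J^nK_M, R) \cong \Ext_R^{r}(R/J^n, M)^{\text{(suitably)}}$, and passing to the limit gives $H_J^{r}(M)$. Alternatively, and perhaps more cleanly, I would run the same trivial-extension argument as in the proof of Lemma \ref{t1}: the identities $K_{K_M}$-computations reduce to the known case, and the spectral sequence $E_2^{pq} = H_J^p(H^q(\RHom(K_M,D_R)))$ collapses in the right range because the lower cohomologies $H^q(\RHom(K_M,D_R))$ for $q > d-r$ have small dimension.

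The main obstacle I anticipate is the last identification, $H_J^{d}(K_M,R) \cong H_J^{r}(M)$: this is where the reflexivity of canonical modules must be used carefully, since $K_{K_M}$ need not equal $M$ on the nose (it equals the $(S_2)$-ification, or unmixed part, of $M$), and one must check that the discrepancy lies in cohomological degrees that do not affect $H_J^r$. The cleanest route is probably to reduce to $r = d$ by cutting down with a maximal regular sequence in $\Ann_R M$ (exactly as in Lemma \ref{t1}), pass to the trivial extension $S = R \ltimes M$ to split off the contribution of $M$ from that of $R$, and then invoke the already-established Lemma \ref{t1} together with the vanishing $\dim_R \Ext_R^q(K_M, R) < d - q$ for $q > 0$, which follows from $K_M$ satisfying $(S_2)$ and the local duality theorem applied locally at each prime. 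I would not expect any genuinely new idea to be needed beyond what is already in the proof of Lemma \ref{t1}; the content is in verifying that $K_M$ inherits the hypotheses of that lemma and in tracking the degrees.
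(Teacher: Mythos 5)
There is a genuine gap, and it sits exactly where you yourself locate "the main obstacle." Applying Lemma \ref{t1} to $K_M$ is legitimate (since $K_M$ is unmixed and satisfies $(S_2)$) and gives $H_J^{r}(K_{K_M})\cong H_J^{d}(K_M,R)$, but this pushes the entire content of the lemma into the remaining identification $H_J^{d}(K_M,R)\cong H_J^{r}(M)$ --- which is precisely Corollary \ref{c1}, and in the paper that corollary is \emph{deduced from} Lemma \ref{p1} together with Lemma \ref{t1}. Neither of your suggestions supplies an independent proof of it. The asserted isomorphism $\Ext_R^{d}(K_M/J^nK_M,R)\cong\Ext_R^{r}(R/J^n,M)$ is not a standard fact and no argument is given; and ``running the same trivial-extension argument as in Lemma \ref{t1}'' would amount to applying Lemma \ref{t1} to $M$ itself, which is not permitted here: Lemma \ref{p1} assumes nothing like $\Ass M=\Assh M$ or $(S_1)$ on $M$, and these hypotheses are exactly what the trivial-extension argument needs. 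What must actually be proved is that the natural map $f\colon M\to K_{K_M}$ has $\dim(\Ker f)\le r-1$ and $\dim(\Coker f)\le r-2$ (the bound $r-2$ is essential, to kill $H_J^{r-1}$ of the cokernel as well as $H_J^{r}$); Grothendieck vanishing then yields that $H_J^{r}(f)$ is an isomorphism. You never establish these bounds, and the bound $\dim\Ext_R^q(K_M,R)<d-q$ you quote at the end serves the proof of \ref{t1}, not this comparison. The paper gets the bounds by passing to a Gorenstein ring of dimension $r$, where $K_M\cong M^{\ast}$, $K_{K_M}\cong M^{\ast\ast}$, and Auslander--Bridger gives $\Ker f\cong\Ext_R^1(\tr M,R)$ and $\Coker f\cong\Ext_R^2(\tr M,R)$, combined with $\dim\Ext_R^i(X,R)\le r-i$.

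A second, related gap is in your reduction step. Cutting down by a maximal regular sequence in $\Ann_R M$ to reach $r=d$ works in Lemma \ref{t1} because there $R$ is Gorenstein, so $\grade(\Ann_R M)=d-r$; in Lemma \ref{p1} the ring is only assumed to have a dualizing complex, need not be Cohen--Macaulay, and such a regular sequence need not exist. The paper instead invokes Kawasaki's theorem to produce a Gorenstein local ring of dimension exactly $r$ mapping onto $R/\Ann\,M$ and then uses independence of the base ring (Theorem \ref{IDT}-type arguments for ordinary local cohomology); some device of this kind, or else a direct proof over an arbitrary Gorenstein presentation of the dimension bounds on $\Ker f$ and $\Coker f$, is genuinely needed and cannot be dismissed as ``standard arguments.'' So, contrary to your closing expectation, the proof does require ideas beyond those in Lemma \ref{t1}: Kawasaki's theorem and the Auslander--Bridger analysis of the biduality map.
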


\begin{proof}
In virtue of \cite[Theorem\,1.2]{Kawasaki} we can take a Gorenstein ring $A$ of dimension $r$ with a surjective ring homomorphism $\phi :A \to R/\Ann\, M$. 
Replacing $R$ (resp. $J$) with $A$ (resp. $\phi ^{-1} (J(R/\Ann\,M))$), we may assume that $R$ is an $r$-dimensional Gorenstein local ring. 
Then we have $K_M\cong M^{\ast}$ and $K_{K_M}\cong M^{\ast \ast}$, where $(-)^{\ast}=\Hom _R (-, R)$. 
Let $f:M\to M^{\ast\ast}$ be the natural homomorphism.
It follows from \cite[Proposition\,2.6]{AB} that $\Ker\, f \cong \Ext _R ^1 (\tr M, R)$ and 
$\Coker\,f\cong \Ext _R ^2 (\tr M, R)$, where $\tr M$ denotes the Auslander transpose of $M$. 
It is easily seen that $\dim _R\, \Ext _R ^i (X, R) \leq r-i$ for any finitely generated $R$-module $X$ and $i\geq 0$. 
Hence $\dim\,(\Ker\,f) \leq r-1$ and $\dim\,(\Coker\,f) \leq r-2$. 
From this one sees that the induced homomorphism $H_J^r(f):H_J ^r (M)\to H_J ^r (M^{\ast\ast})$ is an isomorphism.
\end{proof}

Combining the isomorphisms given in Lemmas \ref{t1} and \ref{p1}, 
we conclude that the following corollary holds.

\begin{cor}\label{c1}
Let $R$ be a Gorenstein local ring of dimension $d$, 
and let  $M$ be a finitely generated $R$-module of dimension $r$. 
Then there is an isomorphism 
\[ H_J ^r (M)\cong H_J ^d (K_M, R).  \]
\end{cor}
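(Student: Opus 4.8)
The plan is to chain together the two isomorphisms already established in Lemmas \ref{t1} and \ref{p1}. Corollary \ref{c1} asserts $H_J^r(M) \cong H_J^d(K_M, R)$ for a finitely generated module $M$ of dimension $r$ over a Gorenstein local ring $R$ of dimension $d$, and the natural route is to interpose $K_{K_M}$ as an intermediary: Lemma \ref{p1} gives $H_J^r(K_{K_M}) \cong H_J^r(M)$, while an appropriate instance of Lemma \ref{t1} (with $i=0$) should give $H_J^r(K_{K_M}) \cong H_J^d(K_M, R)$. Composing the inverse of the first with the second yields the claim.

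\medskip

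\noindent The key steps, in order, are as follows. First I would record the structural facts about $N := K_M$ needed to invoke Lemma \ref{t1} with $N$ in place of $M$: since $R$ is Gorenstein, $K_M = \Ext_R^{d-r}(M, R)$, and from this one reads off that $\dim K_M = \dim M = r$, that $\Ass K_M = \Assh K_M$ (the canonical module is unmixed), and that $K_M$ satisfies Serre's condition $(S_2)$. Second, applying Lemma \ref{t1} to the module $K_M$ — which has dimension $r$ and satisfies $(S_{n+1})$ for $n = 0$ (indeed for $n=1$) — I obtain
\[
H_J^{r-i}(K_{K_M}) \cong H_J^{d-i}(K_M, R)
\]
for $i = 0$ (and $i=1$), where $d$ is still the dimension of the ambient Gorenstein ring. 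Taking $i = 0$ gives $H_J^r(K_{K_M}) \cong H_J^d(K_M, R)$. Third, I invoke Lemma \ref{p1}, which gives $H_J^r(K_{K_M}) \cong H_J^r(M)$ directly. Splicing these two isomorphisms together produces $H_J^r(M) \cong H_J^d(K_M, R)$, which is exactly the statement of the corollary.

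\medskip

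\noindent I do not anticipate a genuine obstacle here, since the corollary is explicitly advertised in the excerpt as following by combining Lemmas \ref{t1} and \ref{p1}; the only point that requires a moment of care is confirming that $K_M$ really does meet the hypotheses of Lemma \ref{t1} — namely $\Ass K_M = \Assh K_M$ and $(S_2)$ — and that the relevant index $d$ appearing in Lemma \ref{t1} applied to $K_M$ agrees with the $d = \dim R$ in the corollary's statement (it does, because the Gorenstein ambient ring is unchanged when we pass from $M$ to its canonical module). These compatibility checks are the substance of the short paragraph immediately preceding the corollary in the excerpt, so the proof itself will amount to little more than citing Lemma \ref{t1} for $i=0$, citing Lemma \ref{p1}, and composing.
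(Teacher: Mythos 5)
Your proposal is correct and matches the paper's own argument: the text preceding the corollary verifies exactly the facts you list about $K_M$ ($\dim K_M = r$, $\Ass K_M = \Assh K_M$, and $(S_2)$), applies Lemma \ref{t1} to $K_M$ to get $H_J^{r-i}(K_{K_M}) \cong H_J^{d-i}(K_M, R)$ for $i=0,1$, and combines the $i=0$ case with Lemma \ref{p1} precisely as you do. No gaps.
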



Now we shall show  the following theorem, which is essentially shown in \cite{Schenzel}.
We should note that it holds without assuming that the local ring $R$ is Gorenstein.

\begin{thm}\label{c2}
Let $(R, \m)$ be a complete local ring and let $M$ be a finitely generated $R$-module of dimension $r$.
Then we have an isomorphism 
\[ H_J ^r (M)^{\vee} \cong \GmJ (K_M ). \]
\end{thm}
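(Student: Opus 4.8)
The plan is to reduce to the situation where $R$ is Gorenstein with $\dim R = \dim M = r$, and then to read off the isomorphism by combining Corollary \ref{c1} with Theorem \ref{t2}. We may assume $M\neq 0$. Put $R' = R/\Ann_R M$, a complete local ring of dimension $r$ over which $M$ is faithful; note that $K_M$, the ordinary local cohomology $H_J^r(M)$, and $\GmJ(K_M)$ are all $R'$-modules (the first two because $K_M$ and $H^r_J(M)=\varinjlim_n\Ext^r_R(R/J^n,M)$ are annihilated by $\Ann_R M$, the last because it is a submodule of $K_M$). By \cite[Theorem 1.2]{Kawasaki} there is a Gorenstein complete local ring $A$ of dimension $r$ together with a surjection $\phi\colon A\to R'$; write $\m_A=\phi^{-1}(\m R')$ and $J_A=\phi^{-1}(JR')$ for the induced maximal ideal and ideal of $A$.

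First I would verify that passing from $R$ to $A$ changes none of the objects appearing in the statement. Since $\Ann_A M=\ker\phi$ and $A/\ker\phi=R'$ has dimension $r$, the $A$-module $M$ satisfies $\dim_A M=r=\dim A$, so that $K_M=\Hom_A(M,A)$; moreover the canonical module is intrinsic to $M$, since with compatibly normalized dualizing complexes $\RHom_R(M,D_R)\simeq\RHom_{R'}(M,D_{R'})\simeq\RHom_A(M,D_A)$, whence the $R$-canonical module and the $A$-canonical module of $M$ coincide. The module $H_J^r(M)$ is unchanged by the base-ring independence of ordinary local cohomology, and $\GmJ(K_M)$ is unchanged by Theorem \ref{IDT} applied to the surjections $R\to R'$ and $A\to R'$ (each satisfies the required hypothesis, as a surjection sends $J$ onto the extended ideal). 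Finally the Matlis duals taken over $R$, over $R'$, and over $A$ agree on $R'$-modules. Hence it suffices to prove the theorem after replacing $R$ by $A$; that is, we may assume $R$ is a complete Gorenstein local ring with $\dim R=\dim M=r$.

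In this situation Corollary \ref{c1}, applied with $d=r$, gives an isomorphism $H_J^r(M)\cong H_J^r(K_M,R)$ between the ordinary local cohomology of $M$ and the generalized local cohomology of the pair $(K_M,R)$. On the other hand $K_M$ is a finitely generated $R$-module and $R$, being complete, is $J$-adically complete, so Theorem \ref{t2} (again with $d=r$) yields $\GmJ(K_M)\cong H_J^r(K_M,R)^{\vee}$. Combining these two isomorphisms gives $\GmJ(K_M)\cong H_J^r(M)^{\vee}$, which is the assertion of the theorem.

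The invariance check in the reduction step is routine bookkeeping; the one point that genuinely requires care is arranging that the Gorenstein ring $A$ can be taken complete and of dimension exactly $r$, so that (i) $\dim_A M=\dim A$, which collapses Corollary \ref{c1} to the form needed and identifies $K_M$ with $\Hom_A(M,A)$, and (ii) $A$ is $J_A$-adically complete, which is the standing hypothesis of Theorem \ref{t2}. Everything else is a formal concatenation of isomorphisms already established in this section; the overall argument parallels the reduction carried out in the proof of Lemma \ref{p1}.
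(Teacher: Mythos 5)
Your argument is correct and is essentially the paper's own proof: both pass to a Gorenstein complete local ring mapping onto the given ring and then concatenate Corollary \ref{c1} with Theorem \ref{t2}, transferring the torsion functor and Matlis duals via Theorem \ref{IDT} and the surjectivity of the cover. The only (harmless) variation is that the paper takes a Gorenstein complete cover $S\twoheadrightarrow R$ with $\dim S=\dim R=d$ and keeps the degrees $r$ and $d$ distinct, whereas you first replace $R$ by $R/\Ann_R M$ and use a dimension-$r$ cover so that both degrees collapse to $r$; the base-independence of $K_M$ that you check explicitly is used implicitly in the paper as well.
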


\begin{proof}
Since $R$ is a complete local ring, there exists a Gorenstein complete local ring  $S$  of $\dim\, S = \dim\, R =d$  with a surjective ring homomorphism  $\phi :S\to R$. 
Set $\a =\phi ^{-1} (J)$. 
Let us denote the maximal ideal of $S$  by $\n$. 
Note that  $S$  is $\a$-adically complete as well. 
Thus we can apply Corollary \ref{c1}, Theorem \ref{t2} and Theorem \ref{IDT}, and we obtain the following isomorphisms. 
\[
\begin{array}{rcll}
H_J ^r (M)^{\vee}
 & = & \Hom _R (H_{\a} ^r (M), E_R (R/\m )) & \vspace{4pt}\\
 & \cong & \Hom _S (H_{\a} ^r (M), E_S (S/\n )) & \vspace{4pt}\\
 & \cong & \Hom _S (H_{\a} ^d (K_M, S), E_S (S/\n )) \hphantom{KK} &\text{(by Corollary \ref{c1})} \vspace{4pt}\\
 & \cong & \varGamma _{\n, \a} (K_M) &\text{(by Theorem \ref{t2})} \vspace{4pt}\\
 & \cong & \GmJ (K_M)  &\text{(by Theorem  \ref{IDT}).} 
\end{array}
\]
\end{proof}

\begin{cor}\label{c2cor}
As in the previous theorem, let $(R, \m)$ be a complete local ring and let $M$ be a finitely generated $R$-module of dimension $r$.
Then we have an isomorphism 
\[ \HmJ^{r}(M)^{\vee}=\G_{J}(K_{M}). \]
\end{cor}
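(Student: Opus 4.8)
The plan is to derive Corollary \ref{c2cor} from Theorem \ref{c2} by a duality trick, essentially trading the role of the ideal $J$ for its ``complement'' inside the canonical module. The key observation is that Theorem \ref{c2}, applied with $\m$ in place of an arbitrary maximal ideal is already symmetric enough: we want to turn the isomorphism $H_J^r(M)^\vee \cong \GmJ(K_M)$ into $\HmJ^r(M)^\vee \cong \G_J(K_M)$. First I would recall that, since $R$ is complete, Matlis duality is a perfect duality on the relevant categories, so it suffices to produce an isomorphism $\G_J(K_M) \cong \HmJ^r(M)^\vee$, or equivalently (applying $(-)^\vee$ and using $K_{K_M}\cong$ something computable) to relate $\G_J$ of a canonical module to $\HmJ^r$ of the original module.

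The cleanest route: apply Theorem \ref{c2} not to $M$ but in a form where $\m$ and $J$ have swapped roles. Concretely, I would first reduce to the Gorenstein case by choosing a Gorenstein complete local ring $S$ with a surjection $\phi: S \to R$ and setting $\a = \phi^{-1}(J)$, exactly as in the proof of Theorem \ref{c2}; by Theorem \ref{IDT} both sides of the desired isomorphism are unchanged under this base change (the maximal ideal of $S$ contracts to $\m$, and $\phi$ is surjective so $\phi(\a) = JR$). So assume $R$ is Gorenstein of dimension $d$. Now by Corollary \ref{c1} we have $H_J^r(M) \cong H_J^d(K_M, R)$, and by Lemma \ref{p1} together with the discussion preceding it, $K_{K_M} \cong M^{**}$ with $H_J^r(K_{K_M}) \cong H_J^r(M)$. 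The point is that $K_M$ is a module of dimension $r$ with $\Ass K_M = \Assh K_M$, so Theorem \ref{c2} applies to $K_M$ in place of $M$, giving $H_J^r(K_M)^\vee \cong \GmJ(K_{K_M}) \cong \GmJ(M^{**})$, and by Lemma \ref{p1} (the $\G$-level statement, since $\GmJ$ is the $0$-th local cohomology) this equals $\GmJ(M)$ up to the negligible kernel and cokernel of $M \to M^{**}$.

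That, however, is the wrong twist — it reproves a variant of Theorem \ref{c2} rather than Corollary \ref{c2cor}. The correct mechanism is to dualize Theorem \ref{c2} itself. Apply $(-)^\vee$ to the isomorphism of Theorem \ref{c2} applied to $K_M$: since $R$ is complete and $\GmJ(K_{K_M})$ is an $(\m,J)$-torsion module, and $K_{K_M}$ is reflexive enough that $(K_{K_M})^\vee$ relates back to $H^{d-r}$ of a free resolution, we get $H_J^r(K_M) \cong \GmJ(K_{K_M})^\vee \cong \HmJ^r(\text{something})$. More transparently: by the local duality philosophy, $\GmJ$ and $\HmJ^r$ are ``Matlis dual'' functors on modules of dimension $r$, in the sense that for $N$ of dimension $r$ one has $\HmJ^r(N)^\vee \cong \G_J(K_N)$ precisely because $\GmJ(K_N)^\vee \cong H_J^r(N)$ is Theorem \ref{c2} and the two torsion functors $\GmJ, \G_J$ sum up (via the generalized \v{C}ech description and Proposition \ref{basic Cech}) in a way compatible with duality. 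I would make this precise by writing $\HmJ^r(M)^\vee$, using Corollary \ref{c1} to replace it by $(H_J^d(K_M,R))^\vee$-type expression adjusted at the maximal ideal, then Theorem \ref{t2} (or its $(\m,J)$-analogue) to identify this with $\G_J(K_M)$ directly, running the same chain of isomorphisms as in the proof of Theorem \ref{c2} but with the roles of $\m$ and $J$ interchanged at the step that invokes Theorem \ref{t2}.

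The main obstacle, and the step I would spend the most care on, is justifying the ``$\m \leftrightarrow J$ swap'': in Theorem \ref{t2} the ring is assumed $J$-adically complete and the output is $\GmJ$, whereas for Corollary \ref{c2cor} I need an $\m$-adically complete (i.e. genuinely complete local) statement whose output is $\G_J$ — and $\G_J$ is ordinary $J$-torsion, corresponding formally to ``$J$ playing the role of $\m$ and $\m$ playing the role of the auxiliary ideal,'' which is degenerate since $\m$ need not contain $J$. So the honest argument is likely to avoid literally swapping and instead: (i) reduce to Gorenstein complete $R$ via $\phi: S \to R$ and Theorem \ref{IDT}; (ii) compute $\HmJ^r(M)^\vee$ by resolving $M$ and using that $\HmJ^r(-) \cong \HmJ^r(R)\otimes_R(-)$ is right exact (Lemma \ref{lem-LH}, valid once one knows $\HmJ^i(R)=0$ for $i>r$, which follows from Theorem \ref{Semi-6} since $\dim R/J \le r$ is not automatic — here one needs $\dim M = r$ and the structure of $M$, so I would instead work with $K_M$ whose dimension is exactly $r$); (iii) identify $\HmJ^r(R)^\vee$ with $\G_J(K_R)$ by a direct injective-resolution computation, using that a minimal injective resolution $E^\bullet$ of $K_R$ has $E^i = \bigoplus_{\operatorname{ht}\p = i} E(R/\p)$, so $\GmJ(E^i)$ and $\G_J(E^i)$ pick out complementary prime supports, and Matlis duality swaps the two; (iv) tensor up from $R$ to $M$ and chase. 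Getting the bookkeeping of which primes survive under $\GmJ$ versus $\G_J$, and checking that Matlis dual of the $\GmJ$-subcomplex is the $\G_J$-quotient complex in the right degree, is where the real work lies.
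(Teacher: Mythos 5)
There is a genuine gap: you never arrive at the mechanism that actually proves the corollary, and the fallback plan (i)--(iv) you settle on breaks at specific steps. The paper's proof is a short limit argument over the auxiliary ideal: by Proposition \ref{inv-lim}, $\G_J(K_M)=\varprojlim_{I\in\WTmJ}\G_{\m,I}(K_M)$; applying Theorem \ref{c2} to each $I\in\WTmJ$ gives $\G_{\m,I}(K_M)\cong \HI^{r}(M)^{\vee}$; Matlis duality turns direct limits into inverse limits, so $\varprojlim_{I}\bigl(\HI^{r}(M)^{\vee}\bigr)\cong\bigl(\varinjlim_{I}\HI^{r}(M)\bigr)^{\vee}$; and Theorem \ref{dir-lim} identifies $\varinjlim_{I\in\WTmJ}\HI^{r}(M)$ with $\HmJ^{r}(M)$. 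Your text circles around a ``swap of $\m$ and $J$'' and correctly senses that a literal swap is degenerate, but the inverse/direct limit descriptions of Section 3 (Proposition \ref{inv-lim} and Theorem \ref{dir-lim}) are exactly what makes the swap legitimate, and they never enter your argument.

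As for the concrete plan: step (ii) needs $\HmJ^{i}(R)=0$ for all $i>r$ in order to invoke Lemma \ref{lem-LH}, and by Theorem \ref{KTY1-9} this amounts to $\dim R/J\le r$, which is not a hypothesis of the corollary; replacing $M$ by $K_M$ does not help, since the required vanishing concerns $R$, not the module. Step (iii) asserts $\HmJ^{r}(R)^{\vee}\cong\G_J(K_R)$: when $r<d$ this is not even the right statement (the corollary for $M=R$ concerns $\HmJ^{d}(R)$), and when $r=d$ it is the corollary itself for $M=R$, so the step is circular. Step (iv) would pass from $\Hom_R\bigl(M,\G_J(K_R)\bigr)$ to $\G_J(K_M)$, which implicitly uses $K_M\cong\Hom_R(M,K_R)$; in the Gorenstein case $K_M=\Ext_R^{d-r}(M,R)$, so this fails as soon as $r<d$ (indeed $\Hom_R(M,R)$ can vanish while $K_M\neq0$). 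So the proposal does not yield a proof; the missing idea is precisely the limit formalism relating $\G_J$ to the family $\{\G_{\m,I}\}_{I\in\WTmJ}$ and $\HmJ^{r}$ to the family $\{\HI^{r}\}_{I\in\WTmJ}$.
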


\begin{proof}
We know from Proposition \ref{inv-lim} that the equality 
$$\G_{J}(K_{M}) = \varprojlim_{I\in \WTmJ}\G_{\m, I}(K_{M})$$ holds.
Therefore it follows from the previous theorem that 
$$
\G_{J}(K_{M})
=\varprojlim_{I \in \WTmJ}\left(\HI^{r}(M)^{\vee}\right) 
=\left(\varinjlim_{I \in \WTmJ} \HI^{r}(M)\right)^{\vee}. 
$$
The last module is isomorphic to 
$\HmJ^{r}(M)^{\vee}$  by Theorem \ref{dir-lim}. 
\end{proof}

\section{Derived functors  on derived categories}

We denote by  $D^b (R)$  the derived category consisting of all bounded complexes over  $R$. 
The left exact functor $\Gamma _{I, J}$  defined on the category of  $R$-modules induces the right derived functor  $\RG_{I, J} : D^b(R) \to D^b (R)$. 
In this section we show several isomorphisms between functors involving  $\RG_{I, J}$.

\begin{lem}\label{trivial iso}
Let  $X, Y \in D^b (R)$. 
Then there are natural isomorphisms in  $D^b (R)$. 
$$
 X \otimes _R^{\L} \RG _{I, J} (Y) \cong 
 \RG _{I, J} ( X \otimes _R^{\L}  Y) \cong 
 \RG _{I, J} (X) \otimes _R^{\L} Y.  
$$
\end{lem}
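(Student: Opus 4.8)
The statement to prove is that for $X,Y\in D^b(R)$ there are natural isomorphisms
\[
X \otimes_R^{\L} \RG_{I,J}(Y) \cong \RG_{I,J}(X\otimes_R^{\L} Y) \cong \RG_{I,J}(X)\otimes_R^{\L} Y .
\]
The natural approach is to compute all three terms via the generalized \v{C}ech complex $C^{\bullet}_{\aa,J}$, where $\aa=a_1,\dots,a_s$ is a sequence generating $I$. The essential point is that $C^{\bullet}_{\aa,J}$ is a bounded complex of \emph{flat} $R$-modules (each module $R_{a_i,J}$ is a localization of $R$, hence flat, and finite tensor products and finite direct sums of flat modules are flat), so tensoring with it computes the correct derived tensor product, and by Theorem \ref{L=C} it also computes $\RG_{I,J}$.

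\textbf{Key steps.} First I would establish the basic computational identity: for any complex $Z\in D^b(R)$ there is a natural isomorphism $\RG_{I,J}(Z)\cong C^{\bullet}_{\aa,J}\otimes_R Z$ in $D^b(R)$. For a single module this is Theorem \ref{L=C} together with the flatness of $C^{\bullet}_{\aa,J}$ (flatness guarantees that $C^{\bullet}_{\aa,J}\otimes_R -$ sends quasi-isomorphisms to quasi-isomorphisms, hence descends to the derived category and, applied to an injective resolution, computes the right derived functor; one uses that $C^{\bullet}_{\aa,J}\otimes_R E$ has cohomology concentrated in degree $0$ equal to $\GIJ(E)$ for injective $E$, which is exactly the content of the proof of Theorem \ref{L=C}). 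For a bounded complex $Z$, replace $Z$ by a bounded-above complex of projectives or use a standard hyper-derived-functor / double-complex argument. Second, since $C^{\bullet}_{\aa,J}$ is a bounded complex of flats, for any $Z$ one also has $C^{\bullet}_{\aa,J}\otimes_R Z \cong C^{\bullet}_{\aa,J}\otimes_R^{\L} Z$ naturally. Combining, $\RG_{I,J}(Z)\cong C^{\bullet}_{\aa,J}\otimes_R^{\L} Z$. Now the three terms in the statement become, respectively,
\[
X\otimes_R^{\L}(C^{\bullet}_{\aa,J}\otimes_R^{\L} Y),\qquad C^{\bullet}_{\aa,J}\otimes_R^{\L}(X\otimes_R^{\L} Y),\qquad (C^{\bullet}_{\aa,J}\otimes_R^{\L} X)\otimes_R^{\L} Y,
\]
and these are identified by the associativity and commutativity (symmetry) of the derived tensor product $\otimes_R^{\L}$, which are standard. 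Finally I would check naturality in $X$ and $Y$, which is automatic since every isomorphism used (Theorem \ref{L=C}, flatness, associativity/commutativity of $\otimes^{\L}$) is natural.

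\textbf{Main obstacle.} The routine steps (associativity/commutativity of $\otimes^{\L}$) are genuinely routine; the one place requiring care is the first step — upgrading the module-level isomorphism of Theorem \ref{L=C} to an isomorphism of functors on $D^b(R)$ of the form $\RG_{I,J}(Z)\cong C^{\bullet}_{\aa,J}\otimes_R Z$. The subtlety is that $\RG_{I,J}$ on $D^b(R)$ is defined via injective resolutions (replace $Z$ by a bounded-below complex of injectives $E^{\bullet}$ and apply $\GIJ$ termwise), whereas the \v{C}ech description most naturally works with flat resolutions; reconciling the two requires the observation that $\GIJ(E^{\bullet})\cong C^{\bullet}_{\aa,J}\otimes_R E^{\bullet}$ as complexes (degreewise, by Proposition \ref{basic Cech}(4) and (5), the higher \v{C}ech terms of an injective module vanish in cohomology — in fact the proof of Theorem \ref{L=C} shows $C^{\bullet}_{\aa,J}\otimes_R E$ is a resolution of $\GIJ(E)$) so that the two total complexes of the relevant double complex are quasi-isomorphic. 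Once this comparison is in place, everything else is formal manipulation in the derived category.
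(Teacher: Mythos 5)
Your proposal is correct and follows essentially the same route as the paper: the paper's proof also identifies all three complexes with $C^{\bullet}_{\aa,J}\otimes_R^{\L} X\otimes_R^{\L} Y$ via the generalized \v{C}ech complex (Theorem \ref{L=C}) and concludes by associativity and commutativity of $\otimes_R^{\L}$. The extra care you take in upgrading Theorem \ref{L=C} from modules to bounded complexes is a detail the paper leaves implicit, but it is the same argument.
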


\begin{proof}
Let  $\aa$  be a sequence of elements of  $R$  which generate  $I$. 
Then all these complexes are isomorphic to 
$X \otimes _R^{\L} (C _{\aa, J} \otimes _R ^{\L} Y) 
\cong   C _{\aa, J} \otimes _R ^{\L} (X  \otimes _R^{\L} Y)  
\cong   (C _{\aa, J} \otimes _R ^{\L} X)  \otimes _R^{\L} Y. 
$
\end{proof}

\begin{thm}\label{p-38}
Let $(R, \m)$ be a $d$-dimensional complete local ring admitting the dualizing complex $D_R$, 
and let  $X$ be a bounded $R$-complex with finitely generated homologies. 
Suppose that $J \subseteq \sqrt{I}$, 
then there is an isomorphism
\[ \RG_{I} (X) \cong \RG_{I}(\RG_{\m, J} (\RHom (X, D_{R}))^{\vee})[-d]. \]
\end{thm}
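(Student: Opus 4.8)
The plan is to prove the slightly sharper statement that the right–hand side does not change if $\RG_{\m ,J}$ is replaced by $\RG_{\m}$; since $\RG_{\m}(X^{\dagger })^{\vee }\cong X[d]$ by Grothendieck local duality and biduality, this immediately gives the theorem. The hypothesis $J\subseteq\sqrt I$ enters exactly at this comparison step. First I would record two consequences of $J\subseteq\sqrt I$. As $\{\m\}\subseteq\WmJ$, the inclusions $\varGamma_{\m}(M)\subseteq\GmJ(M)$ assemble into a natural transformation $\varGamma_{\m}\to\GmJ$, hence a morphism $\eta_Y\colon\RG_{\m}(Y)\to\RG_{\m ,J}(Y)$ in $D^b(R)$. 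Secondly, if $\p$ is a prime with $I\subseteq\p$ then $J\subseteq\sqrt I\subseteq\p$, so $\p+J=\p$, which is $\m$-primary only when $\p=\m$; thus $\WmJ\cap V(I)=\{\m\}$. Using the description of torsion submodules by supports (Corollary~\ref{KTY1-2}), this yields the equalities of functors $\varGamma_I\circ\GmJ=\varGamma_{\m}=\varGamma_I\circ\varGamma_{\m}$ (for the latter, note $I\subseteq\m$, so an $\m$-torsion module is $I$-torsion). Since $\GmJ$ and $\varGamma_{\m}$ carry injective modules to injective, hence $\varGamma_I$-acyclic, modules (Proposition~\ref{KTY1-4}), passing to derived functors gives natural isomorphisms $\RG_I\circ\RG_{\m ,J}\cong\RG_{\m}\cong\RG_I\circ\RG_{\m}$; and since on modules $\varGamma_I(\eta^{\varGamma}_M)$ is, after the identifications $\varGamma_I\varGamma_{\m}(M)=\varGamma_{\m}(M)=\varGamma_I\GmJ(M)$ (both equal to $\{x\in M:\Supp Rx\subseteq\{\m\}\}$), the identity of $\varGamma_{\m}(M)$, it follows that $\RG_I(\eta_Y)$ is the identity of $\RG_{\m}(Y)$. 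Therefore, writing $C_Y$ for the cone of $\eta_Y$, we get $\RG_I(C_Y)\cong\mathrm{cone}(\mathrm{id}_{\RG_{\m}(Y)})=0$.

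Next I would transfer this vanishing to Matlis duals, i.e.\ show $\RG_I(C_Y^{\vee})=0$. From $\RG_I(C_Y)=0$ and the canonical triangle $\RG_I(C_Y)\to C_Y\to\check L\otimes_R^{\L}C_Y\to\RG_I(C_Y)[1]$, where $\check L$ is the bounded complex with $\check L^{\,p}=\bigoplus_{|T|=p+1}R_{a_T}$ attached to a generating sequence $a_1,\dots ,a_s$ of $I$ (so that $\RG_I(-)\cong C^{\bullet}_{\aa}\otimes_R^{\L}-$ by Theorem~\ref{L=C}), one obtains $C_Y\cong\check L\otimes_R^{\L}C_Y$, hence $C_Y^{\vee}\cong\RHom_R(\check L,C_Y^{\vee})$. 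On each summand $R_{a_T}$ of $\check L^{\,p}$ some $a_i\in I$ is a unit, so $a_i$ acts invertibly on $\RHom_R(R_{a_T},C_Y^{\vee})$, whence $\RG_I$ annihilates it (Proposition~\ref{basic Cech}); running through the finite filtration of $\RHom_R(\check L,C_Y^{\vee})$ by brutal truncations gives $\RG_I(C_Y^{\vee})=0$. (Alternatively one may invoke the Greenlees--May identity $\RG_I((-)^{\vee})\cong(\mathbf L\Lambda^{I}(-))^{\vee}$ from~\cite{Greenlees-May}, together with $\mathbf L\Lambda^{I}(C_Y)=0$.) Matlis-dualizing the triangle $\RG_{\m}(Y)\xrightarrow{\eta_Y}\RG_{\m ,J}(Y)\to C_Y\to\RG_{\m}(Y)[1]$ and applying $\RG_I$ then produces an isomorphism $\RG_I(\RG_{\m ,J}(Y)^{\vee})\xrightarrow{\ \sim\ }\RG_I(\RG_{\m}(Y)^{\vee})$.

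Finally I would specialize to $Y=X^{\dagger}:=\RHom_R(X,D_R)$, which again has finitely generated homologies. Grothendieck local duality for the complete local ring $R$ gives $\RG_{\m}(X^{\dagger})^{\vee}\cong\RHom_R(X^{\dagger},D_R)[d]$, and $\RHom_R(X^{\dagger},D_R)\cong X$ by biduality with the dualizing complex, so $\RG_I(\RG_{\m}(X^{\dagger})^{\vee})\cong\RG_I(X)[d]$. Combining this with the previous step yields $\RG_I(\RG_{\m ,J}(X^{\dagger})^{\vee})[-d]\cong\RG_I(\RG_{\m}(X^{\dagger})^{\vee})[-d]\cong\RG_I(X)$, which is the assertion. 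The only delicate point is the implication $\RG_I(C_Y)=0\Rightarrow\RG_I(C_Y^{\vee})=0$: although $C_Y$ has cohomology supported on $\WmJ$, its Matlis dual need not be supported on $\WmJ$, so one cannot conclude directly from supports — this is precisely where the \v{C}ech-complex computation (equivalently, Greenlees--May duality) is required.
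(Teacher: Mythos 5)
Your proof is correct, but it reaches the crucial comparison by a different mechanism than the paper. Both arguments share the same outer frame: biduality and local duality ($\RG_{\m}(Y)^{\vee}\cong\RHom_R(Y,D_R)[d]$ for $Y$ with finitely generated homologies) reduce the theorem to the comparison $\RG_I(\RG_{\m,J}(Y)^{\vee})\cong\RG_I(\RG_{\m}(Y)^{\vee})$; the paper takes $Y=X$, you take $Y=\RHom_R(X,D_R)$, which amounts to the same reduction. The paper then proves the comparison computationally: writing $\RG_I(-)\cong\varinjlim_n\RHom(R/I^n,-)$ and using Matlis adjunction, it reduces to $R/I^n\otimes_R^{\L}\RG_{\m}(X)\cong R/I^n\otimes_R^{\L}\RG_{\m,J}(X)$, which by Lemma \ref{trivial iso} follows from $\RG_{\m}(R/I^n)\cong\RG_{\m,J}(R/I^n)$, i.e.\ from Corollary \ref{J-tor}, since $J\subseteq\sqrt{I}$ makes $R/I^n$ a $J$-torsion module. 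You instead feed the hypothesis in through the support identity $\WmJ\cap V(I)=\{\m\}$ to get the composite-functor isomorphisms $\RG_I\circ\RG_{\m,J}\cong\RG_{\m}\cong\RG_I\circ\RG_{\m}$ (legitimate because both torsion functors send injectives to injectives), so that $\RG_I$ kills the cone $C_Y$ of $\RG_{\m}(Y)\to\RG_{\m,J}(Y)$; the genuinely new step is transferring this vanishing across Matlis duality, $\RG_I(C_Y)=0\Rightarrow\RG_I(C_Y^{\vee})=0$, via the triangle $\RG_I(-)\to(-)\to\check L\otimes_R^{\L}(-)$ and the finite filtration of $\RHom_R(\check L,C_Y^{\vee})$ (equivalently Greenlees--May), and you correctly identify this as the point where support arguments alone would fail. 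What your route buys is two reusable facts not isolated in the paper, namely the composition formula $\RG_I\circ\RG_{\m,J}\cong\RG_{\m}$ under $J\subseteq\sqrt{I}$ and the stability of $\RG_I$-acyclicity under Matlis duality; what the paper's route buys is brevity, staying entirely within Lemma \ref{trivial iso} and Corollary \ref{J-tor}. Two small repairs: the vanishing of $\RG_I(\RHom(R_{a_T},C_Y^{\vee}))$ is not literally Proposition \ref{basic Cech} --- the clean justification is that $a_i\in I$ acts invertibly on that complex while every cohomology module of $\RG_I$ of any complex is $I$-torsion --- and your use of the \v{C}ech description of $\RG_I$ on complexes is the standard extension of Theorem \ref{L=C} from modules, the same one the paper itself invokes in the proof of Lemma \ref{trivial iso}.
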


\begin{proof}
Since $\RHom (X,D_{R})$ is a bounded $R$-complex with finitely generated homologies 
and $X \cong \RHom(\RHom (X,D_{R}),D_{R})$, 
it is enough to show that   
\[ \RG_{I} (\RHom (X, D_{R})) \cong \RG_{I}(\RG_{\m, J} (X)^{\vee})[-d]. \]
Note from the local duality theorem that there is an isomorphism 
$
\RHom (X,D_{R})[d] \cong \RG_{\m}(X)^{\vee} 
$ 
in  $D^b(R)$.  
Therefore we only have to show that 
\[ \RG_{I} (\RG_{\m}(X)^{\vee}) \cong \RG_{I}(\RG_{\m, J} (X)^{\vee}).\]
From the definition of  $\RG_I$  we have an isomorphism 
\begin{align*}
\RG_{I} (\RG_{\m}(X)^{\vee}) 
	&\cong \varinjlim_{n} \RHom(R/I^{n}, \RG_{\m}(X)^{\vee})\\
	&\cong \varinjlim_{n} ((R/I^{n}\otimes_{R}^{\L} \RG_{\m}(X))^{\vee}), 
\end{align*}
and similarly 
\[\RG_{I}(\RG_{\m, J} (X)^{\vee}) 
	\cong \varinjlim_{n} ((R/I^{n}\otimes_{R}^{\L} \RG_{\m,J}(X))^{\vee}).\]
Thus the proof will be completed if we show that there is a natural isomorphism  $$
R/I^{n}\otimes_{R}^{\L} \RG_{\m}(X) 
\cong R/I^{n}\otimes_{R}^{\L} \RG_{\m,J}(X).
$$
In virtue of Lemma \ref{trivial iso}, this is equivalent to 
$$
\RG _{\m} (R/I^{n}) \otimes_{R}^{\L} X \cong \RG _{\m, J}( R/I^{n}) \otimes_{R}^{\L} X.
$$
Therefore it is enough to show that  
$\RG _{\m} (R/I^{n}) \cong \RG _{\m, J}( R/I^{n})$. 
But this is trivial, since   $R/I^n$  is a $J$-torsion module. 
\end{proof}

\begin{thm}\label{p-53}
Let $(R, \m)$ be a $d$-dimensional complete local ring with dualizing complex $D_R$, and let  $X$ be a bounded $R$-complex with finitely generated homologies. 
Then there is an isomorphism
\[ \RG_{\m, J} (X) \cong \RG_{\m, J}(\RG_{J} (\RHom (X, D_{R}))^{\vee})[-d]. \]
\end{thm}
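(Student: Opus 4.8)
The statement to be proved is the analogue of Theorem \ref{p-38} with the roles of $I$ and $\m$ interchanged: we want
\[ \RG_{\m, J} (X) \cong \RG_{\m, J}(\RG_{J} (\RHom (X, D_{R}))^{\vee})[-d]. \]
The strategy is to imitate the proof of Theorem \ref{p-38} step by step. First I would reduce to an identity not involving $\RHom(-, D_R)$ on the outside. Since $\RHom(X, D_R)$ again has finitely generated homologies and $X \cong \RHom(\RHom(X, D_R), D_R)$ by biduality, setting $Y = \RHom(X, D_R)$ the desired isomorphism is equivalent to
\[ \RG_{\m, J}(\RHom(Y, D_R)) \cong \RG_{\m, J}(\RG_J(Y)^{\vee})[-d]. \]
Next I would invoke the local duality theorem in its derived form: $\RHom(Y, D_R)[d] \cong \RG_{\m}(Y)^{\vee}$ in $D^b(R)$. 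So it suffices to show
\[ \RG_{\m, J}(\RG_{\m}(Y)^{\vee}) \cong \RG_{\m, J}(\RG_J(Y)^{\vee}). \]

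**Key steps.** To compare these two, I would use $\RG_J(-) \cong \varinjlim_n \RHom(R/J^n, -)$, which gives
\[ \RG_{\m, J}(\RG_J(Y)^{\vee}) \cong \RG_{\m, J}\bigl(\varinjlim_n (R/J^n \otimes_R^{\L} Y)^{\vee}\bigr). \]
There is an analogous issue on the other side: I want to write $\RG_{\m}(Y)^{\vee}$ in a comparable way. The cleanest route is to observe, using Theorem \ref{dir-lim} (or rather its derived-category incarnation, $\RG_{\m,J} \cong \varinjlim_{\a \in \WTmJ} \RG_{\a}$) together with Lemma \ref{trivial iso}, that applying $\RG_{\m,J}$ to each side reduces the problem — as in Theorem \ref{p-38} — to showing a natural isomorphism
\[ \RG_{\m}(R/\a) \otimes_R^{\L} Y \ \cong\ \RG_{\m, J}(R/\a) \otimes_R^{\L} Y \]
for $\a$ ranging over a cofinal family; and this in turn follows once we know $\RG_{\m}(R/\a) \cong \RG_{\m, J}(R/\a)$. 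But here the relevant modules are $R/J^n$ (not $R/I^n$ as in Theorem \ref{p-38}), and $R/J^n$ is trivially a $J$-torsion module, so Corollary \ref{J-tor} gives $\RG_{\m}(R/J^n) \cong \RG_{\m, J}(R/J^n)$ immediately. Assembling these isomorphisms, passing the direct limit through the derived functors (legitimate since everything here commutes with filtered colimits, cf. Proposition \ref{comm injlim}), and then applying $(-)^{\vee}$ and $\RG_{\m,J}$, yields the claim.

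**Main obstacle.** The routine part is the bookkeeping of $\RG_J$ as a colimit of $\RHom(R/J^n, -)$, the Matlis-duality manipulation $\varinjlim (-)^\vee \cong (\varprojlim -)^\vee$ becoming $(\varinjlim -)^\vee \cong \varprojlim (-)^\vee$, and the commutation of $\RG_{I,J}$ with colimits. The genuinely delicate point is making precise exactly which identity we need to pull back to before invoking Corollary \ref{J-tor}: unlike in Theorem \ref{p-38}, where $J \subseteq \sqrt{I}$ was assumed so that $R/I^n$ is $J$-torsion, here no hypothesis relating $I$ and $J$ is needed because the torsion modules that appear are $R/J^n$, which are automatically $J$-torsion. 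I would need to be careful that the reduction via Lemma \ref{trivial iso} genuinely lands on $\RG_{\m}(R/J^n)$ versus $\RG_{\m,J}(R/J^n)$ and not on something involving $\RG_{\m}(R/I^n)$; tracking this correctly through the tensor-hom adjunctions in the derived category is where the argument must be written with care, but it introduces no new ideas beyond those already used for Theorem \ref{p-38}.
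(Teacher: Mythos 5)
Your opening reduction is exactly the paper's: by biduality and local duality it suffices to prove $\RG_{\m,J}(\RG_{\m}(X)^{\vee})\cong\RG_{\m,J}(\RG_{J}(X)^{\vee})$. But from that point on you have misidentified the key comparison, and this is a genuine gap. After local duality the two \emph{inner} functors to be compared are the ordinary torsion functors $\RG_{\m}$ and $\RG_{J}$, not $\RG_{\m}$ and $\RG_{\m,J}$; and the cyclic modules that enter come from expanding the \emph{outer} functor $\RG_{\m,J}$ as $\varinjlim_{I\in\WTmJ}\varinjlim_{n}\RHom(R/I^{n},-)$ via Theorem \ref{dir-lim}, so they are the modules $R/I^{n}$ with $I\in\WTmJ$, not $R/J^{n}$. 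Concretely, using $(R/I^{n}\otimes_R^{\L}Z)^{\vee}\cong\RHom(R/I^{n},Z^{\vee})$ and Lemma \ref{trivial iso}, what must be shown is $\RG_{J}(R/I^{n})\otimes_R^{\L}X\cong\RG_{\m}(R/I^{n})\otimes_R^{\L}X$, and the paper gets this from $\RG_{J}(R/I^{n})\cong\RG_{I+J}(R/I^{n})\cong\RG_{\m}(R/I^{n})$: the first isomorphism because $R/I^{n}$ is $I$-torsion, the second because $I\in\WTmJ$ means precisely that $I+J$ is $\m$-primary. Your proposed key identity $\RG_{\m}(R/J^{n})\cong\RG_{\m,J}(R/J^{n})$ (Corollary \ref{J-tor}) is true but is never the isomorphism needed here, and your remark that ``no hypothesis relating the ideals is needed'' overlooks that the $\m$-primariness of $I+J$ for $I\in\WTmJ$ is exactly the input that replaces the hypothesis $J\subseteq\sqrt{I}$ of Theorem \ref{p-38}.

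Moreover, the route by which you try to bring $R/J^{n}$ into the picture does not go through. Writing $\RG_{J}(Y)\cong\varinjlim_{n}\RHom(R/J^{n},Y)$ and Matlis dualizing produces an \emph{inverse} limit, $\RG_{J}(Y)^{\vee}\cong\varprojlim_{n}\RHom(R/J^{n},Y)^{\vee}$; your displayed $\varinjlim_{n}(R/J^{n}\otimes_R^{\L}Y)^{\vee}$ is not correct. Since $\RG_{\m,J}$ commutes with direct limits (Proposition \ref{comm injlim}) but not with inverse limits, the ``routine colimit bookkeeping'' you describe cannot be carried out on that side, so your sketch never reaches a comparison of the form $\RG_{\m}(R/J^{n})$ versus $\RG_{\m,J}(R/J^{n})$ by a valid chain of reductions. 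The repair is to follow the pattern of Theorem \ref{p-38} literally but with the correct cast of characters: expand only the outer $\RG_{\m,J}$ over $I\in\WTmJ$, apply the adjunction and Lemma \ref{trivial iso}, and conclude with $\RG_{J}(R/I^{n})\cong\RG_{I+J}(R/I^{n})\cong\RG_{\m}(R/I^{n})$ as above.
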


\begin{proof}
Similarly as in the proof of Theorem \ref{p-38}, 
it is enough to show that 
\[ \RG_{\m, J}(\RG_{\m}(X)^{\vee})\cong \RG_{\m, J}(\RG_{J}(X)^{\vee}). \]
For each ideal $I \in \WTmJ$ and for an integer $n\geq 1$, we have the following isomorphisms hold by Lemma \ref{trivial iso}.  
\begin{align*}
R/I^{n} \otimes_{R}^{\L} \RG_{J}(X)
&\cong \RG_{J}(R/I^{n})\otimes_{R}^{\L}X\\
&\cong \RG_{I+J}(R/I^{n})\otimes_{R}^{\L}X\\
&\cong \RG_{\m}(R/I^{n})\otimes_{R}^{\L}X\\
&\cong R/I^{n}\otimes_{R}^{\L}\RG_{\m}(X)\\
\end{align*}
Hence, 
\begin{align*}
\RHom (R/I^{n},\RG_{J}(X)^{\vee}) 
	&\cong (R/I^{n}\otimes_{R}^{\L} \RG_{J}(X))^{\vee}\\
	&\cong (R/I^{n}\otimes_{R}^{\L}\RG_{\m}(X))^{\vee}\\
	&\cong \RHom (R/I^{n}, \RG_{\m}(X)^{\vee}).
\end{align*}
Applying the functor 
$\lim\limits_{\substack{\longrightarrow \\ I\in \WTmJ}} (\lim\limits_{\substack{\longrightarrow \\ n\in \N}}(-))$, 
we see from Theorem \ref{dir-lim} that 
\[ \RG_{\m, J}(\RG_{J}(X)^{\vee}) \cong \RG_{\m, J}(\RG_{\m}(X)^{\vee}). \]
\end{proof}

As a natural extension of terminology, we say that a complex  $X \in D^b(R)$ is  $(I, J)$-torsion if  $\RG_{I, J} (X) = X$.

\begin{cor}
Let $(R, \m)$ be a complete local ring with  dualizing complex $D_R$,
and let  $X$ be a bounded $R$-complex with finitely generated homologies. 
\begin{enumerate}
		\item[{(\rm 1)}]\ If $X$ is a $J$-torsion, then $X \cong \RG_{J}(\RG_{\m, J}(X^{\vee})^{\vee})$.
		\item[{(\rm 2)}]\ If $X$ is an $(\m, J)$-torsion, then $X \cong \RG_{\m, J}(\RG_{J}(X^{\vee})^{\vee})$.
\end{enumerate}
\end{cor}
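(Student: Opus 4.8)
The plan is to deduce both parts directly from Theorems \ref{p-38} and \ref{p-53} by specializing $X$ to an $(I,J)$-torsion complex in the appropriate sense, after first checking that the hypotheses needed for those theorems hold. For part (1), suppose $X$ is $J$-torsion, i.e.\ $\RG_J(X)=X$. Note that $J$-torsion implies $(\m,J)$-torsion (the underlying statement at the level of modules is Corollary \ref{J-tor}, since $\GmJ$ restricted to $J$-torsion modules equals $\varGamma_\m$, and an honestly $J$-torsion complex has $J$-torsion homologies); more directly, we may simply invoke Theorem \ref{p-53} with this $X$. Theorem \ref{p-53} gives
\[
\RG_{\m, J}(X) \cong \RG_{\m, J}\bigl(\RG_{J}(\RHom(X, D_R))^{\vee}\bigr)[-d].
\]
Now I would like to rewrite the inner term using the hypothesis. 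Since $X$ is $J$-torsion, $\RG_J(X)\cong X$, and applying the Matlis dual together with the identity $\RG_J(X)^\vee$-type manipulations from the proof of Theorem \ref{p-53} (the chain of isomorphisms there passing through $\RHom(R/I^n,-)$ and Matlis duality) we want to identify $\RG_J(\RHom(X,D_R))^\vee[-d]$ with $X^\vee$-dual data. Concretely, the local duality isomorphism $\RHom(X,D_R)[d]\cong \RG_\m(X)^\vee$ from the proof of Theorem \ref{p-38} shows $\RG_J(\RHom(X,D_R))[d]\cong \RG_J(\RG_\m(X)^\vee)$, and one shows $\RG_J(\RG_\m(X)^\vee)\cong \RG_\m(\RG_{\m,J}(X^\vee)^\vee)$ — wait, I should instead run this the other way: the cleanest route is to apply Theorem \ref{p-53} to the complex $Y:=\RG_J(X^\vee)^\vee$ rather than to $X$ itself, and then verify $Y\cong X$ using that $X$ is $J$-torsion.

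So the sharper plan for (1) is: set $Y = \RG_J(X^\vee)^\vee$. Since $X$ is $J$-torsion we have $X^\vee$ is "$J$-cotorsion'' and $\RG_J(X^\vee)$ need not equal $X^\vee$; rather, the point is that $\RHom(R/J^n, -)$ applied to $X^\vee$ is the Matlis dual of $R/J^n\otimes^{\L} X$, and since $X$ is $J$-torsion, $\varinjlim_n (R/J^n\otimes^{\L}X)$ recovers $X$ up to the relevant completion/torsion identification — this is exactly the content bundled into Theorem \ref{t2} and Corollary \ref{c2cor} at the level of modules. Then Theorem \ref{p-53} applied to $Y$ reads $\RG_{\m,J}(Y)\cong \RG_{\m,J}(\RG_J(\RHom(Y,D_R))^\vee)[-d]$, and combining with $\RHom(Y,D_R)[d]\cong \RG_\m(Y)^\vee$ and the identification $Y\cong X$ yields $X\cong \RG_J(\RG_{\m,J}(X^\vee)^\vee)$. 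Part (2) is entirely parallel: if $X$ is $(\m,J)$-torsion then $\RG_{\m,J}(X)=X$, and applying Theorem \ref{p-38} — which requires $J\subseteq\sqrt{I}$, automatically satisfied here since we take $I=\m$ and every ideal is inside $\sqrt\m=\m$ — with $I=\m$, or more precisely applying Theorem \ref{p-53} in the form $\RG_{\m,J}(X)\cong\RG_{\m,J}(\RG_J(\RHom(X,D_R))^\vee)[-d]$ and substituting the local duality identity $\RHom(X,D_R)[d]\cong\RG_\m(X)^\vee$, reduces the claim $X\cong\RG_{\m,J}(\RG_J(X^\vee)^\vee)$ to the self-duality $X\cong\RG_{\m,J}(X)$, which is the hypothesis.

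The main obstacle I anticipate is bookkeeping the Matlis-dual and completeness identifications precisely: $\RG_J(-)^\vee$ and $(-)^\vee$ interact with $\varinjlim$/$\varprojlim$ correctly only because $R$ is complete and $(-)^\vee$ is exact and sends direct limits to inverse limits, so I must be careful that each $\varinjlim_n(R/I^n\otimes^{\L}-)^\vee$ appearing in the proofs of Theorems \ref{p-38} and \ref{p-53} is genuinely $\RG_I$ of the dual and not merely a completion of it. The homologies staying finitely generated at each stage (needed to apply local duality $\RHom(-,D_R)[d]\cong\RG_\m(-)^\vee$ twice, via $X\cong\RHom(\RHom(X,D_R),D_R)$) is the other point requiring a line of justification. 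Once those identifications are in hand, both statements are formal consequences of Theorems \ref{p-38} and \ref{p-53} together with the reflexivity $X\cong\RHom(\RHom(X,D_R),D_R)$ in $D^b(R)$ for complexes with finitely generated homology.
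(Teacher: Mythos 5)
Your overall strategy --- deduce both parts from Theorems \ref{p-38} and \ref{p-53} together with the biduality $X \cong \RHom(\RHom(X,D_R),D_R)$ --- is the right one and is how the paper argues, but the execution has genuine gaps. First, the assertion that $J$-torsion implies $(\m,J)$-torsion is false: Corollary \ref{J-tor} says that on $J$-torsion modules $\GmJ$ agrees with $\varGamma_{\m}$, which is not the identity (take $R/J$ with $\dim\, R/J>0$), so $\RG_{\m,J}(X)\not\cong X$ in general and Theorem \ref{p-53} applied to $X$ cannot produce $X$ on the left-hand side in part (1). Second, your ``sharper plan'' for (1) works with $Y=\RG_J(X^{\vee})^{\vee}$, which is not the complex occurring in the statement (that is $\RG_{\m,J}(X^{\vee})^{\vee}$); Theorem \ref{p-53} applied to $Y$ only gives information about $\RG_{\m,J}(Y)$, the outer functor $\RG_J$ of the claim never materializes in your chain of isomorphisms, and in any case $Y$ is not known to have finitely generated homologies, so Theorems \ref{p-38} and \ref{p-53} do not even apply to it. The detour through Theorem \ref{t2} and Corollary \ref{c2cor} does not repair this.

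The missing idea is which complexes to feed into the two theorems, and with which $I$. For (1), apply Theorem \ref{p-38} with $I=J$ (this is where the hypothesis $J\subseteq\sqrt I$ is used; $I=\m$ is the wrong specialization) to $X$ itself: since $\RG_J(X)\cong X$, this gives $X\cong\RG_J(\RG_{\m,J}(\RHom(X,D_R))^{\vee})[-d]$. Then apply Theorem \ref{p-53} not to $X$ but to $\RHom(X,D_R)$, which does have finitely generated homologies; together with biduality and $\RG_J(X)\cong X$ it gives $\RG_{\m,J}(\RHom(X,D_R))\cong\RG_{\m,J}(\RG_J(X)^{\vee})[-d]\cong\RG_{\m,J}(X^{\vee})[-d]$, and substitution yields (1). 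Part (2) is symmetric: Theorem \ref{p-53} applied to $X$ gives $X\cong\RG_{\m,J}(\RG_J(\RHom(X,D_R))^{\vee})[-d]$, and Theorem \ref{p-38} with $I=J$ applied to $\RHom(X,D_R)$ gives $\RG_J(\RHom(X,D_R))\cong\RG_J(\RG_{\m,J}(X)^{\vee})[-d]\cong\RG_J(X^{\vee})[-d]$. In your version of (2), after substituting only the local duality identity you are left with $\RG_J(\RG_{\m}(X)^{\vee})$ where $\RG_J(X^{\vee})$ is required, and passing between these is exactly the content of Theorem \ref{p-38} with $I=J$; it does not reduce to the hypothesis $X\cong\RG_{\m,J}(X)$ alone.
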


\begin{proof}
$(\rm 1)$ 
Since $X$ is $J$-torsion, it follows from Theorem \ref{p-38} that 
\[ X=\RG_{J}(X) \cong \RG_{J}(\RG_{\m, J} (\RHom (X, D_{R}))^{\vee})[-d], \]
where,  by Theorem \ref{p-53},  
\[ \RG_{\m, J} (\RHom (X, D_{R})) \cong \RG_{\m, J}(\RG_{J} (X)^{\vee})[-d] = \RG_{\m, J}(X^{\vee})[-d].\]
Thus the claim (1) follows. 

$(\rm 2)$ 
Since $X$ is $(\m, J)$-torsion, it holds that 
\[ X=\RG_{\m, J}(X) \cong \RG_{\m, J}(\RG_{J} (\RHom (X, D_{R}))^{\vee})[-d].\]
On the other hand we have from Theorem \ref{p-38} that
\[ \RG_{J} (\RHom (X, D)) \cong \RG_{J}(\RG_{\m, J} (X)^{\vee})[-d] = \RG_{J}(X^{\vee})[-d].\]
\end{proof}

\begin{lem}\label{p-56}
Let $M$ be an $(I, J)$-torsion $R$-module, 
and let  $X$ be a left bounded $R$-complex. 
Then there is an isomorphism
\[ \RHom(M, X) \cong \RHom(M, \RG_{I, J}(X)). \]
\end{lem}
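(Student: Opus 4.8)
Lemma \ref{p-56} asserts that for an $(I,J)$-torsion module $M$ and a left bounded complex $X$, we have $\RHom(M,X)\cong\RHom(M,\RG_{I,J}(X))$.

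The plan is to reduce to the case where $X$ is a single injective $R$-module, and then exploit the fact that an injective module splits as $\GIJ(X)\oplus X'$ where $X'$ is $(I,J)$-torsion-free.

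First I would replace $X$ by an injective resolution $E^{\bullet}$ of $X$; since $X$ is left bounded and $R$ is noetherian, we may take $E^{\bullet}$ to be left bounded as well. Then $\RHom(M,X)$ is computed by $\Hom(M,E^{\bullet})$, and $\RG_{I,J}(X)$ is represented by $\GIJ(E^{\bullet})$. Since each $\GIJ(E^n)$ is again an injective $R$-module (by Proposition \ref{KTY1-5-1}, or more directly because $\GIJ$ of an injective is a direct summand of it), $\RHom(M,\RG_{I,J}(X))$ is computed by $\Hom(M,\GIJ(E^{\bullet}))$. So it suffices to produce a natural isomorphism of complexes $\Hom(M,E^{\bullet})\cong\Hom(M,\GIJ(E^{\bullet}))$, and for this it is enough to show, termwise, that the inclusion $\GIJ(E)\hookrightarrow E$ induces an isomorphism $\Hom(M,\GIJ(E))\xrightarrow{\sim}\Hom(M,E)$ for every injective $R$-module $E$.

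For the termwise claim, write $E=\bigoplus_{\p}E(R/\p)^{(\mu_{\p})}$. Each $E(R/\p)$ with $\p\in\WIJ$ is $(I,J)$-torsion and each with $\p\notin\WIJ$ is $(I,J)$-torsion-free, by Proposition \ref{KTY1-4}; hence $\GIJ(E)=\bigoplus_{\p\in\WIJ}E(R/\p)^{(\mu_{\p})}$ is a direct summand of $E$ with $(I,J)$-torsion-free complement $E'=\bigoplus_{\p\notin\WIJ}E(R/\p)^{(\mu_{\p})}$. Since $M$ is $(I,J)$-torsion, any homomorphism $M\to E'$ has image contained in $\GIJ(E')=0$, so $\Hom(M,E')=0$. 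Therefore $\Hom(M,E)=\Hom(M,\GIJ(E))\oplus\Hom(M,E')=\Hom(M,\GIJ(E))$, and the inclusion induces the desired isomorphism. (One small point to check is that the decomposition of $\Hom(M,-)$ over the infinite direct sum is legitimate; but $\Hom(M,-)$ commutes with arbitrary direct sums of injective hulls over a noetherian ring, or one argues directly that a map $M\to E$ lands in finitely many summands, which is all that is needed.)

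The main obstacle is essentially the bookkeeping in the last step: verifying that $\GIJ(E)$ really is a direct summand of $E$ with $(I,J)$-torsion-free complement, which rests on Proposition \ref{KTY1-4}, and confirming that $\Hom(M,-)$ interacts correctly with the (possibly infinite) direct sum decomposition of an injective module. Once those are in hand, the passage from the termwise statement to the statement for complexes is formal, since the isomorphisms $\Hom(M,\GIJ(E^n))\to\Hom(M,E^n)$ are natural in $E^n$ and hence commute with the differentials, giving an isomorphism of complexes and therefore of their cohomologies, which are precisely $\RHom(M,\RG_{I,J}(X))$ and $\RHom(M,X)$.
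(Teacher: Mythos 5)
Your proof is correct and follows essentially the same route as the paper: replace $X$ by a left bounded injective resolution $E^{\bullet}$ and reduce to the termwise identity $\Hom(M,E^{i})=\Hom(M,\GIJ(E^{i}))$, with $\GIJ(E^{\bullet})$ again a complex of injectives so that it computes $\RHom(M,\RG_{I,J}(X))$. The only difference is in the termwise step: the paper checks it directly (for $f\in\Hom(M,E^{i})$ and $x\in M$, the relation $I^{n}x\subseteq Jx$ gives $I^{n}f(x)\subseteq Jf(x)$, hence $\Im f\subseteq\GIJ(E^{i})$), which makes your decomposition $E^{i}=\GIJ(E^{i})\oplus E'$ and the attendant direct-sum bookkeeping unnecessary, though your version of that step is also valid since it rests on the same observation applied to the torsion-free summand.
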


\begin{proof}
Let $E$ be an injective resolution of a complex $X$. 
We will show that $\Hom(M, E^{i}) =\Hom (M,\GIJ(E^{i}))$. 
Let $f \in \Hom(M, E^{i})$ and $x \in M$. 
Since $M$ is $(I, J)$-torsion, there exists an integer $n\geq 0$ such that $I^{n}x\subseteq Jx$. 
Thus we have $I^{n}f(x)\subseteq Jf(x)$, thus  $f(x) \in \GIJ(E^{i})$. 
This shows that $\Im f \subseteq \GIJ(E^{i})$. 
Therefore it holds that
\begin{align*}
\RHom (M,X)
	&\cong \Hom (M, E) \\
	&=\Hom (M, \GIJ (E)) \\
	&\cong \RHom (M, \RG_{I,J}(X)). 
\end{align*}
\end{proof}

\begin{prop}
Let $R$ be a $d$-dimensional Gorenstein complete local ring with maximal ideal $\m$, and $J$ be an ideal of $R$ with $\height~J=r$. 
Then 
\[ \Ass (\HmJ^{d-r}(R)^{\vee} )\cap V(J) = \Min (R/J) = \Ass (H_{J}^{r}(R)). \]
\end{prop}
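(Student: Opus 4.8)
The plan is to establish the chain of equalities by relating each term to the fiber of a minimal injective resolution. First I would handle the easy equality $\Min(R/J) = \Ass(H_J^r(R))$: since $R$ is Gorenstein of dimension $d$ and $\height J = r$, the grade of $J$ equals $r$, so $H_J^i(R)=0$ for $i<r$ and the lowest nonvanishing local cohomology $H_J^r(R)$ sits inside $\Gamma_J(E^r)$, where $E^\bullet$ is the minimal injective resolution of $R$; because $R$ is Gorenstein, $E^r = \bigoplus_{\height\p=r} E(R/\p)$, and one argues exactly as in Proposition \ref{intersection} (using that $\Ker d^r \cap \Gamma_J(E^r)$ is an essential submodule of $\Gamma_J(E^r)$, cf.\ the argument in Theorem \ref{KTY1-6}) that $\Ass(H_J^r(R)) = \{\p \in V(J) \mid \height\p = r\} = \Min(R/J)$, the last equality holding because $\height J = r$.

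Next I would compute $\Ass(\HmJ^{d-r}(R)^\vee)\cap V(J)$. Applying Corollary \ref{c2cor} with $M = R$ (so $K_M = K_R = R$, using that $R$ is Gorenstein of dimension $d$ and complete, hence $r = \dim R = d$ in the notation of that corollary — wait, here the module dimension is $d$): we get $\HmJ^{d}(R)^\vee \cong \Gamma_J(K_R) = \Gamma_J(R)$. But $\height J = r \geq 1$ typically, so I should instead invoke the appropriate local duality. The cleaner route is Theorem \ref{LD} or Proposition \ref{intersection}: since $R$ is Gorenstein, $J$ need not be perfect, so Proposition \ref{intersection} does not directly apply. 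Instead I would use the minimal injective resolution directly: $\Gamma_{\m,J}(E^i) = \bigoplus_{\height\p = i,\ \p\in W(\m,J)} E(R/\p)$, and by the new intersection theorem (Lemma \ref{new intersection}, valid here because $R/J$ has a perfect-type bound — actually for general $J$ one uses $\height\p \geq \height J = r$ for $\p \in W(\m,J)\cap V(J)$ more carefully), the complex $\Gamma_{\m,J}(E^\bullet)$ has its first nonzero term related to height. The key point: I would show $\Ass(\HmJ^{d-r}(R)) \cap V(J)$ equals $\{\p \in W(\m,J) \mid \height\p = d-r\}\cap V(J)$, then dualize.

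The main obstacle, and where I would spend the most care, is passing between $\Ass$ of a module and $\Ass$ of its Matlis dual $(-)^\vee$ when the modules involved are not finitely generated. The correct statement is $\Ass_R(N^\vee) = \mathrm{Supp}_R(N) \cap$ (closed points)\,\ldots\ — actually over a complete local ring one has $\Ass(N^\vee) \subseteq \mathrm{Supp}(N)$ with equality on suitable loci; the precise tool is that for $N$ with $\mathrm{Supp}(N)$ well-behaved, $\p \in \Ass(N^\vee)$ iff $\p \in \mathrm{Supp}(N)$ and $\widehat{R_\p}$-depth considerations hold. I would reduce the intersection with $V(J)$ to localizing at a prime $\p \supseteq J$: then $(\HmJ^{d-r}(R)^\vee)_\p$ relates to $\HmJ^{d-r}(R)$'s $\p$-primary part, and using that $R_\p$ is Gorenstein with $\height\p \geq r$, a local-duality computation at $R_\p$ (as in the proof of Lemma \ref{t1}) identifies when $\p$ is associated. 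Combining, $\p \in \Ass(\HmJ^{d-r}(R)^\vee)\cap V(J)$ forces $\height\p = r$ and $\p \in \Min(R/J)$, and conversely every minimal prime of $R/J$ arises, closing the loop with the first paragraph. The delicate bookkeeping is ensuring the height shift $d - (d-r) = r$ matches on both the $\HmJ$ side and the $H_J$ side; I expect this matching of cohomological degrees to be the subtle part rather than any single deep theorem.
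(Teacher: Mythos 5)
Your second equality is essentially fine: since every prime containing $J$ has height at least $r=\height J$, one has $\G_J(E^i)=0$ for $i<r$ in the minimal injective resolution $E^\bullet$ of $R$, so $H_J^r(R)=\Ker d^r\cap\G_J(E^r)$ is essential in $\G_J(E^r)=\bigoplus_{\height\p=r,\,\p\in V(J)}E(R/\p)$, giving $\Ass(H_J^r(R))=\{\p\in V(J)\mid\height\p=r\}$; this is close to the ``similar manner'' the paper alludes to. The first equality, however, is where your proposal has genuine gaps. The structural input you want -- a Lemma \ref{new intersection}-type bound $\height\p\ge d-r$ for all $\p\in\WmJ$ -- is false for non-perfect $J$: take $R=k[[x,y,u,v]]/(xu-yv)$ (a complete Gorenstein hypersurface, $d=3$) and $J=(x,y)$, so $r=1$; then $\p=(u,v)$ satisfies $\p+J=\m$, hence $\p\in\WmJ$, yet $\height\p=1<d-r=2$. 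Consequently $\GmJ(E^i)$ need not vanish for $i<d-r$, the complex $\GmJ(E^\bullet)$ does not begin in degree $d-r$, and the essentiality argument of Proposition \ref{intersection} (which assumes $J$ perfect precisely to secure this vanishing) does not compute $\Ass(\HmJ^{d-r}(R))$. Your parenthetical ``for $\p\in\WmJ\cap V(J)$ more carefully'' does not repair this, since the resolution computing $\HmJ^{d-r}(R)$ involves all of $\WmJ$, not only the primes containing $J$.

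More fundamentally, even a complete description of $\Ass(\HmJ^{d-r}(R))$ would not give you $\Ass(\HmJ^{d-r}(R)^\vee)$. For a module $N$ that is not finitely generated, $\p\in\Ass(N^\vee)$ is controlled by $\Hom_R(R/\p,N^\vee)\cong(N\otimes_R R/\p)^\vee$, not by $\Ass(N)$ nor by localizing $N^\vee$; the statements you gesture at (``$\Ass(N^\vee)\subseteq\Supp(N)$ with equality on suitable loci,'' the ``$\p$-primary part'' of the dual) are neither precise nor correct as stated, and this is exactly the point at which your sketch stops being a proof. The paper proceeds differently and completely: for $\p\in V(J)$, Lemma \ref{p-56} (applicable because $R/\p$ is $J$-torsion) together with Theorem \ref{p-38} (with $I=J$, $X=R$, $D_R\simeq R$ up to shift) yields $\RHom(R/\p,R)\cong\RHom(R/\p,\RG_{\m,J}(R)^\vee)[-d]$; the resulting spectral sequence, combined with the vanishing $\HmJ^{i}(R)=0$ for $i>d-r=\dim R/J$ from Theorem \ref{Semi-6}, gives $\Hom(R/\p,\HmJ^{d-r}(R)^\vee)\cong\Ext_R^{r}(R/\p,R)$, whence $\p\in\Ass(\HmJ^{d-r}(R)^\vee)$ if and only if $\Ext^{r}_{R_\p}(\kappa(\p),R_\p)\neq0$, i.e.\ $\height\p=r$, because $R_\p$ is Gorenstein. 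Some substitute for this duality step is indispensable; the minimal injective resolution alone will not produce it.
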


\begin{proof}
Let $\p \in V(J)$. 
By Theorem \ref{p-38} and Lemma \ref{p-56}, it holds that 
\begin{align*}
 \RHom (R/\p, R)
	&\cong \RHom (R/\p, \RG_{J} (R))\\
	&\cong \RHom (R/\p, \RG_{J} ( \RG_{\m, J} (R)^{\vee} [-d]))\\
	&\cong \RHom (R/\p, \RG_{\m, J} (R)^{\vee}) [-d].
\end{align*}
Thus there is a spectral sequence 
\[ \Ext_{R}^{p}(R/\p, \HmJ^{q}(R)^{\vee}) \Rightarrow \Ext_{R}^{p-q+d}(R/\p,R).\]
Since  $\HmJ^i (R) = 0$ for  $i > d-r$, we see from this spectral sequence that 
$$
\Hom(R/\p, \HmJ^{d-r}(R)^{\vee}) = \Ext_{R}^{r}(R/\p,R).
$$
This shows that  $\p \in \Ass \HmJ ^{d-r} (R)^{\vee}$  if and only if  $\Ext _R^r(R/\p, R)_{\p} \not= 0$  if and only if  $\height~\p = r$. 
The first equality in the proposition follows from this, and the second can be proved in a similar manner. 
\end{proof}

\begin{prop}
Let $R$ be a $d$-dimensional complete Gorenstein local ring with maximal ideal $\m$. 
Then 
\[ \Ass (H_{J}^{d}(R)^{\vee} )\cap \WmJ=\Ass (\GmJ(R)).\]
\end{prop}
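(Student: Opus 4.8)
The plan is to reduce everything to the single isomorphism $\GmJ(R) \cong \GmJ\big(H_J^d(R)^\vee\big)$ and then read off associated primes via Proposition \ref{KTY1-3}. Since $R$ is Gorenstein, we may take its dualizing complex to be $D_R = R$, concentrated in degree $0$; this is the normalization already implicit in the paper (e.g. through the identity $K_M = \Ext_R^{d-r}(M,R)$ and the local duality formula used in the proof of Theorem \ref{p-38}). Hence Theorem \ref{p-53}, applied with $X = R$, supplies a natural isomorphism
\[ \RG_{\m, J}(R) \cong \RG_{\m, J}\big(\RG_J(R)^\vee\big)[-d] \]
in $D^b(R)$.

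Next I would take $0$-th cohomology of both sides. The left-hand side yields $\GmJ(R)$. For the right-hand side, observe that by Grothendieck vanishing $H_J^i(R) = 0$ for $i > d$, so the complex $\RG_J(R)^\vee$ has cohomology concentrated in degrees $-d, \dots, 0$, with bottom term $H^{-d}(\RG_J(R)^\vee) = H_J^d(R)^\vee$; taking $0$-th cohomology after the shift by $-d$ therefore amounts to computing $H^{-d}\big(\RG_{\m,J}(\RG_J(R)^\vee)\big)$. Because $\RG_{\m,J}$ is the right derived functor of the left exact functor $\GmJ$, a truncation argument — equivalently, inspection of the hypercohomology spectral sequence $E_2^{p,q} = H^p_{\m,J}\big(H^q(\RG_J(R)^\vee)\big) \Rightarrow H^{p+q}\big(\RG_{\m,J}(\RG_J(R)^\vee)\big)$, whose corner term $E_2^{0,-d}$ admits no nonzero incoming or outgoing differential — identifies this module with $\GmJ\big(H^{-d}(\RG_J(R)^\vee)\big) = \GmJ(H_J^d(R)^\vee)$. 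This gives $\GmJ(R) \cong \GmJ(H_J^d(R)^\vee)$. I expect this interchange of ``taking lowest cohomology'' with the derived functor $\RG_{\m,J}$ to be the one step needing care; everything else is bookkeeping with shifts together with an appeal to the already available Theorems \ref{p-53} and Proposition \ref{KTY1-3}.

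Finally, applying Proposition \ref{KTY1-3} to the $R$-module $M = H_J^d(R)^\vee$ gives $\Ass\big(\GmJ(H_J^d(R)^\vee)\big) = \Ass(H_J^d(R)^\vee) \cap \WmJ$, and combining this with the isomorphism just obtained yields $\Ass(\GmJ(R)) = \Ass(H_J^d(R)^\vee) \cap \WmJ$, as desired. As a consistency check: when $J$ is not $\m$-primary and $H_J^d(R) = 0$ — which, by the Lichtenbaum--Hartshorne theorem, happens exactly when $\Min(R) \cap \WmJ = \emptyset$ — both sides are empty, in agreement with $\GmJ(R) = 0$ (Proposition \ref{KTY1-3} again).
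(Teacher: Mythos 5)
Your proposal is correct, and it reaches the conclusion by a route that differs from the paper's in the second half. Both arguments pivot on Theorem \ref{p-53} applied to $X=R$ with $D_R=R$ (your normalization remark is exactly the one the paper uses). From there the paper does not extract a module isomorphism: it tests against $R/\p$ for each $\p\in\WmJ$, using Lemma \ref{p-56} (since $R/\p$ is $(\m,J)$-torsion) to rewrite $\RHom(R/\p,R)$ both as $\RHom(R/\p,\RG_{\m,J}(R))$ and, via Theorem \ref{p-53}, as $\RHom(R/\p,\RG_J(R)^\vee)[-d]$, and then runs two hyper-$\Ext$ spectral sequences whose corner terms identify $\Hom(R/\p,\GmJ(R))$ and $\Hom(R/\p,H_J^d(R)^\vee)$ (both with $\Hom(R/\p,R)$); localizing at $\p$ and using $\Ass(\GmJ(R))\subseteq\WmJ$ then gives the equality of associated primes. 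You instead take cohomology of the isomorphism of Theorem \ref{p-53} directly: your corner/truncation argument identifying $H^{-d}\bigl(\RG_{\m,J}(\RG_J(R)^\vee)\bigr)$ with $\GmJ(H_J^d(R)^\vee)$ is sound (the complex $\RG_J(R)^\vee$ is bounded with cohomology in degrees $-d,\dots,0$ by Grothendieck vanishing, so the term $E_2^{0,-d}$ survives and is the only contribution in total degree $-d$), and it yields the stronger intermediate fact $\GmJ(R)\cong\GmJ(H_J^d(R)^\vee)$, after which Proposition \ref{KTY1-3} finishes the job in one line. What each approach buys: yours avoids Lemma \ref{p-56} and the prime-by-prime testing altogether and produces an actual isomorphism of $(\m,J)$-torsion submodules, from which the statement is immediate; the paper's version stays parallel to the proof of the preceding proposition and records the additional identification of both Hom modules with $\Hom(R/\p,R)$, which carries extra information about the primes involved. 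Your closing consistency check is also fine, since a Gorenstein local ring is equidimensional, so $\Min(R)=\Assh(R)$ there.
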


\begin{proof}
Let $\p \in \WmJ$. 
By Theorem \ref{p-53} and Lemma \ref{p-56}, it holds that
\begin{align*}
\RHom(R/\p, R) 
	&\cong \RHom (R/\p, \RG_{\m,J}(R))\\
	&\cong \RHom (R/\p, \RG_{\m, J}(\RG_{J}(R)^{\vee})[-d])\\
	&\cong \RHom (R/\p, \RG_{J}(R)^{\vee})[-d].
\end{align*}
Thus there are spectral sequences  
\[ \begin{cases}
\Ext_{R}^{p}(R/\p, \HmJ^{q}(R)) \Rightarrow \Ext_{R}^{p+q}(R/\p, R), ~~\text{and} \vspace{4pt} \\
\Ext_{R}^{p}(R/\p, H_{J}^{q}(R)^{\vee}) \Rightarrow \Ext_{R}^{p-q+d}(R/\p, R).
\end{cases} \]
The first spectral sequence induces 
$\Hom_{R}(R/\p, \GmJ (R)) \cong \Hom_{R} (R/\p, R)$, and the second induces 
$\Hom_{R}(R/\p, H_{J}^{d} (R)^{\vee}) = \Hom_{R}(R/\p, R)$, since  $H_J ^q (R) = 0$  for  $q > d$. 
Thus we have shown 
\[ \Hom_{R}(R/\p, \GmJ (R))_{\p} = 
\Hom_{R}(R/\p, H_{J}^{d} (R)^{\vee})_{\p}, \]
for any  $\p \in \WmJ$. 
Since  $\Ass (\GmJ (R)) \subseteq \WmJ$, the proposition follows.
\end{proof}

{\small 
\begin{center} \textbf{Acknowledgments} \end{center}
The authors express their gratitude to Kazuhide Kakizaki. 
The discussion presented in the first section of this paper is actually based on  his master thesis at Okayama University, which was reported by him in the 24th Symposium on Commutative Ring Theory in Japan. 
As concerns this, see the monograph \cite{KTY1} that is written in Japanese.



\end{document}